\documentclass[11pt,reqno]{amsart}
\usepackage[
  letterpaper,
  margin=1.15in
]{geometry}
\usepackage[pagewise]{lineno}
\usepackage{amsmath,amsthm,amssymb,amsfonts,relsize}
\usepackage[english]{babel}
\usepackage{graphicx}
\usepackage{enumitem}
\usepackage{bm}
\usepackage{float}
\usepackage{mathtools}
\usepackage{theoremref}
\usepackage{tikz}
\usetikzlibrary{arrows.meta}
\usepackage{pgfplots}
\usepgfplotslibrary{fillbetween}
\pgfplotsset{compat=1.18}
\numberwithin{equation}{section}
\newtheorem{theorem}{Theorem}[section]
\newtheorem{remark}[theorem]{Remark}
\newtheorem{corollary}[theorem]{Corollary}
\newtheorem{lemma}[theorem]{Lemma}
\newtheorem{proposition}[theorem]{Proposition}

\usepackage{xcolor}
\usetikzlibrary{arrows.meta,calc,positioning}
\mathtoolsset{showonlyrefs}
\definecolor{axiscolor}{HTML}{111827}
\definecolor{tealcurve}{HTML}{0F766E}
\definecolor{redcurve}{HTML}{B42318}
\definecolor{graytext}{HTML}{475569}
\definecolor{slate}{HTML}{334155}
\usepackage{hyperref}
\DeclarePairedDelimiter{\norm}{\lVert}{\rVert}
\newcommand{\R}{\mathbb{R}}
\newcommand{\N}{\mathbb{N}}

\newcommand{\C}{\mathbb{C}}

\newcommand{\eps}{\varepsilon}

\newcommand{\cJ}{\mathcal{J}}
\newcommand{\cL}{\mathcal{L}}
\newcommand{\dd}{\,\mathrm{d}}
\author[D. Sinambela]{Daniel Sinambela}
\address{Department of Mathematics, University of Utah, Asia Campus}
\email{daniel.sinambela@utah.edu}

\begin{document}
\title[Small-amplitude solitary waves on the $f$-plane]{Small-Amplitude Solitary Waves for $f$-Plane Capillary-Gravity Flows with Arbitrary Vorticity}
 \begin{abstract}

We study small-amplitude solitary waves for two-dimensional capillary--gravity flows with arbitrary vorticity on the equatorial $f$-plane. The steady free-boundary problem is formulated as a reversible Hamiltonian spatial-dynamics system in which rotation enters through the speed-dependent effective gravity $g_*=g-2\Omega c$. A center-manifold reduction reduces the local bifurcation problem to finite-dimensional Hamiltonian systems governed by the low-frequency spectrum of a Sturm--Liouville problem with an eigenvalue-dependent boundary condition. We identify the Hamiltonian $0^2$, real $1{:}1$, and Hamiltonian--Hopf resonance curves and obtain corresponding families of symmetric solitary waves under standard nondegeneracy assumptions. We also show that the weak-effective-gravity threshold $g_*=0$ is separated from the $0^2$ and local Hamiltonian--Hopf resonances for uniformly non-stagnant laminar flows, and can be approached only in a near-stagnation regime.

\end{abstract}

\maketitle
\tableofcontents

\section{Introduction}

Geophysical fluid dynamics is concerned with large-scale fluid phenomena for which the Earth's
rotation, known as the Coriolis force, plays an essential role. Among the most important settings is the equatorial region,
where the Coriolis effect strongly influences wave propagation and where zonal waves---waves propagating east to west---are observed
to be trapped near the equator and to decay away from it; see, for example,
\cite{CushmanRoisinBeckers2011,FedorovBrown2009,Pedlosky1979,
Constantin2012ExactEquatoriallyTrapped}.
This physical picture motivates the mathematical study of localized traveling waves in
equatorial water-wave models.

From a modeling perspective, the equatorial setting is particularly appealing. Around the equator,
the dynamics in the fluid are predominantly zonal, and for waves whose meridional length scale is small
relative to the Earth's radius, the full geophysical equations may be well approximated by the
$f$-plane equations, in which the Coriolis parameter is treated as constant; see
\cite{CushmanRoisinBeckers2011,Pedlosky1979}. In the present paper, we work with the
two-dimensional $f$-plane approximation with both surface tension and a general vorticity
distribution. Our interest is in small-amplitude solitary waves: traveling waves which
are localized. 

There has been substantial recent progress in the mathematical study of geophysical and
equatorial waves. For periodic equatorial waves and related geophysical flows, we refer, for
example, to
\cite{Constantin2013EquatorialWindWaves,henry2014existence,Henry2013UnderlyingCurrent,
Constantin2013ThreeDimensional,ConstantinGermain2013,Matioc2012EdgeWaves,
Matioc2012Coriolis,Matioc2013Stratified}. On the other hand, for classical water waves with
vorticity there is by now a plethora of works devoted to existence and bifurcation theory, both for periodic
and solitary waves; see
\cite{ConstStrauss2004,ConstantinVarvaruca2011,ConstantinStrauss2011Discontinuous,
Henry2013FixedDepth,Henry2013LargeAmplitude,shatah2013travelling,BuffoniToland2003,
sinambela2022existence} and the references therein. Solitary waves in these settings also enjoy
rich qualitative properties, including symmetry and refined asymptotic structure; see, for
instance,
\cite{HenryMatioc2014Symmetry,sinambela2025asymptotics,MatiocMatioc2013Symmetry}.

Our approach is inspired by the Hamiltonian spatial-dynamics framework developed by \cite{Mielke1988} and applied in \cite{GrovesWahlen2007}. In the framework, the
horizontal spatial direction is treated as the evolution variable, and the
steady water-wave problem is recast as an infinite-dimensional reversible
Hamiltonian system. Localized waves are then obtained as homoclinic orbits
on finite-dimensional center manifolds. This point of view is particularly
well suited to the study of low-frequency Hamiltonian resonances, such as
the Hamiltonian \(0^2\), real \(1{:}1\), and Hamiltonian--Hopf resonances.

The main structural observation in the present paper is that the steady
\(f\)-plane problem admits the same Dubreil--Jacotin height formulation as
the classical rotational capillary--gravity problem, except that the usual
gravitational coefficient is replaced by the speed-dependent effective
gravity
\begin{equation}\label{eq:intro-effective-gravity}
    g_*:=g-2\Omega c,
\end{equation}
where \(\Omega\) is the Coriolis parameter and \(c\) is the wave speed. Thus,
rotation does not merely add a lower-order perturbation to the equations; it
changes the bifurcation parameter itself. In the classical theory, gravity is
a fixed external parameter. In the \(f\)-plane problem, by contrast, the
effective restoring force depends on the wave speed. This coupling between
rotation, propagation speed, and gravity is the source of the new phenomena
analyzed here.

After nondimensionalization, the effective gravity is represented by
\[
    \alpha=\frac{g_*d^3}{m^2}
    =
    \frac{(g-2\Omega c)d^3}{m^2},
\]
where \(d\) is the undisturbed depth and \(m\) is the relative mass flux. The
distinguished threshold
\[
    g_*=0
    \qquad\Longleftrightarrow\qquad
    \alpha=0
\]
has no analogue in the non-rotating capillary--gravity theory. We refer to
this as the weak-effective-gravity regime. From the viewpoint of the
linearized spatial-dynamics problem, this regime is delicate because varying
the wave speed changes the affine boundary condition in the associated
scalar Sturm--Liouville problem. A central question is therefore whether the
classical low-frequency resonance curves persist unchanged, or whether the
\(f\)-plane coupling produces new spectral behavior near \(\alpha=0\).

We answer this question by combining the Hamiltonian formulation, a
center-manifold reduction in the sense of \cite{Mielke1988}, and a detailed
analysis of the low-frequency spectrum. The linearized spatial-dynamics
operator is reduced to a scalar Sturm--Liouville problem with an
eigenvalue-dependent boundary condition. The intersections and tangencies of
the corresponding boundary spectral curve determine the Hamiltonian \(0^2\),
real \(1{:}1\), and Hamiltonian--Hopf resonance curves in the
\((\beta,\alpha)\)-plane, where \(\beta\) is the nondimensional surface
tension parameter.

The novelty of the present \(f\)-plane problem appears in three related ways. First,
the effective gravity \(g_*=g-2\Omega c\) makes the gravity parameter
speed-dependent, so that perturbing the wave speed moves the problem through
the low-frequency spectral diagram. Second, the weak-effective-gravity
threshold \(\alpha=0\) is separated from the Hamiltonian \(0^2\) resonance
and from the local Hamiltonian--Hopf branch for laminar flows uniformly away
from stagnation. Thus these classical resonance mechanisms cannot approach
the weak-effective-gravity regime unless the underlying laminar flow becomes
near-stagnant. Third, the Hamiltonian \(0^2\) reduction yields a combined
Coriolis--capillarity criterion for the polarity of the bifurcating solitary
waves: depending on the side of the capillarity threshold \(\beta_*\) and
the Coriolis threshold \(\Omega_c\), the small-amplitude branch consists of
waves of elevation or waves of depression.

In this sense, the present work extends the classical spatial-dynamics
theory of rotational capillary--gravity solitary waves to an equatorial
\(f\)-plane setting and identifies how rotation changes the low-frequency
Hamiltonian resonance structure. The solitary waves constructed here
bifurcate from laminar flows whose velocity field is horizontal but
depth-dependent, and the vorticity distribution is allowed to be arbitrary
within the regularity class specified below.

The paper is organized as follows. In the next subsection we state the main
results. In Section~\ref{sec:governing}, we derive the steady \(f\)-plane
capillary--gravity equations, the stream-function formulation, the
Dubreil--Jacotin height equation, and the associated Hamiltonian structure.
Section~\ref{sec:CMR} verifies the center-manifold hypotheses and reduces
the spectral problem to a scalar Sturm--Liouville equation with an
eigenvalue-dependent boundary condition. Section~\ref{sec:Resonances}
analyzes the Hamiltonian \(0^2\), real \(1{:}1\), and Hamiltonian--Hopf
resonances and describes the corresponding reduced dynamics. Additionally, we also discuss about the criterion for solitary waves of depression and elevation. Finally,
Section~\ref{sec:weak-gravity} studies the weak-effective-gravity regime and
proves its separation from the relevant low-frequency resonance curves for
uniformly non-stagnant laminar flows.

\subsection{Statement of results}

We now state the main results. The first theorem gives the general
small-amplitude solitary-wave existence statement obtained from the
Hamiltonian spatial-dynamics formulation and center-manifold reduction.
\begin{theorem}[Existence of Small-Amplitude Solitary Waves]\label{thm:existence}
Let the vorticity distribution function satisfy $\gamma \in H^{1}(m,0)$, and assume the underlying laminar flow is strictly without stagnation, meaning $\inf_{D}(u-c) > 0$. Suppose that there exists a critical surface tension and wave speed $(\sigma_0, c_0)$ such that, at the effective gravity $g^0_{*} = g - 2\Omega c_0$, then the linearized spatial-dynamics operator exhibits either
\begin{enumerate}
    \item a Hamiltonian $0^2$ resonance,
    \item  a real $1:1$ resonance, or
    \item  a Hamiltonian--Hopf bifurcation.
\end{enumerate} Under standard non-degeneracy conditions, the fully nonlinear steady equatorial $f$-plane capillary-gravity problem admits a family of non-trivial, symmetric solitary waves bifurcating from this laminar flow for parameters near $(\sigma_0, c_0)$.
\end{theorem}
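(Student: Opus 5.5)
The plan follows the Groves--Wahlén spatial-dynamics strategy, with one new ingredient: the rotation changes the governing equations only through the effective gravity $g_*=g-2\Omega c$.

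\emph{Step 1: height formulation.} Starting from the steady $f$-plane equations, I introduce the relative stream function $\psi$ with $\psi_y=u-c$. The strict non-stagnation hypothesis $\inf_D(u-c)>0$ makes the Dubreil--Jacotin change of variables $(x,y)\mapsto(x,-\psi)$ a valid bijection onto the fixed strip $\{(q,p):m<p<0\}$. The Coriolis term $2\Omega(u-c)$ in the vertical momentum equation is a gradient, equal to $2\Omega\,\partial_y\psi$. It can therefore be absorbed into a modified Bernoulli function. The outcome is that the height function $h(q,p)$ satisfies exactly the classical rotational capillary--gravity height equation, with $g$ replaced by $g_*$ and the vorticity function $\gamma\in H^1(m,0)$ unchanged.

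\emph{Step 2: Hamiltonian system and center manifold.} After nondimensionalizing with $\alpha=g_*d^3/m^2$ and a surface-tension parameter $\beta$, I write the problem as a Lagrangian in the variable $q$. A Legendre transform gives a reversible Hamiltonian system $w_q=L w+N(w;\alpha,\beta)$ on a phase space $X$, which has a domain $D(L)$. The reverser is $q\mapsto-q$ combined with a sign flip of the momentum variables.

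Next I verify Mielke's hypotheses:
\begin{enumerate}
\item $N$ is smooth near the origin, and $N=O(|w|^2)$ uniformly in the parameters.
\item The spectrum of $L$ off the imaginary axis is bounded away from it.
\item The imaginary-axis spectrum consists of finitely many eigenvalues of finite multiplicity.
\item The resolvent estimate $\|(L-ik)^{-1}\|\lesssim 1/(1+|k|)$ holds for large real $k$.
\end{enumerate}
The heart of this step is reducing the eigenvalue problem $Lw=\lambda w$ to a Sturm--Liouville equation in $p$ whose boundary condition at $p=0$ depends on $\lambda^2$. I expect the resolvent estimate to be the main obstacle, because the boundary condition is eigenvalue-dependent and $\gamma$ has only $H^1$ regularity. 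I would first try energy estimates for the scalar Sturm--Liouville resolvent, uniform in $|k|$. For large $|k|$ the surface-tension boundary term and the interior term should both dominate, which gives the $1/|k|$ decay. This reproduces the Groves--Wahlén argument, with $g$ replaced by $\alpha$ throughout. Since $\alpha$ can take any sign, I also have to check that no step used the positivity of $g$.

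\emph{Step 3: reduced dynamics.} At $(\sigma_0,c_0)$ the purely imaginary spectrum has one of three forms, depending on the resonance:
\begin{enumerate}
\item $0^2$: a double zero eigenvalue, giving a two-dimensional center manifold.
\item Real $1{:}1$: a double zero together with a pair of double real eigenvalues $\pm\lambda_0$. Only the double zero lies on the imaginary axis, so the center manifold is again two-dimensional, and homoclinic orbits are found after a parameter perturbation that makes the zero eigenvalue simple-split.
\item Hamiltonian--Hopf: a pair $\pm i\omega_0$ of double eigenvalues, giving a four-dimensional center manifold.
\end{enumerate}
I introduce the unfolding parameter $\varepsilon$ as a perturbation of $(\beta,\alpha)$. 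Perturbing $c$ moves both $\alpha$ (through $g_*$ and $m$) and $\beta$. The nondegeneracy condition is that this map is locally onto, or at least transverse to the resonance curve, so that varying $(\sigma,c)$ near $(\sigma_0,c_0)$ crosses the resonance.

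On the center manifold the reduced reversible Hamiltonian system is brought to normal form:
\begin{enumerate}
\item In the $0^2$ and real $1{:}1$ cases it is a KdV-type system $A''=\varepsilon A - cA^2+\dots$. Its explicit $\operatorname{sech}^2$ homoclinic orbit persists under the higher-order terms by reversibility and transversality within the symmetric section.
\item In the Hamiltonian--Hopf case it reduces to the Iooss--Pérouème normal form. A nondegenerate cubic coefficient of the correct sign gives a pair of reversible homoclinic orbits, which persist by the Iooss--Pérouème and Iooss--Kirchgässner theorems.
\end{enumerate}
These persistence statements are exactly the standard nondegeneracy assumptions in the theorem.

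\emph{Step 4: back to physical variables.} Reversible homoclinic orbits correspond to symmetric solutions $h(q,p)$ that decay as $|q|\to\infty$. Inverting the Dubreil--Jacotin transform, which is valid by non-stagnation for small amplitude, gives symmetric solitary waves of the full $f$-plane problem near the laminar flow.
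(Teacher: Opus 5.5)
\textbf{Where this matches the paper.} Your Steps 1 and 2 follow essentially the paper's route. So do the $0^2$ and Hamiltonian--Hopf parts of Step 3: the Coriolis force is a gradient giving $g_*=g-2\Omega c$, you use the Dubreil--Jacotin height formulation and the Groves--Wahl\'en Hamiltonian structure, you apply Mielke's reduction with an energy-method resolvent estimate, and you obtain the KdV-type $\operatorname{sech}^2$ orbit and the Iooss--P\'erou\`eme normal form.

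\textbf{The gap: the real $1{:}1$ case.} At a point of the real $1{:}1$ curve, the colliding spatial eigenvalues $\pm\kappa_0$ are real and nonzero, hence hyperbolic. They are therefore not in the center subspace of (A1). There is no double zero there either: $0\in\sigma(\mathcal{L})$ only when $\alpha=\alpha_*=(\int_0^1 a^{-3}\dd\zeta)^{-1}$, whereas the local branch of $C_1$ has $\alpha-\alpha_*=\alpha_4\kappa^4+O(\kappa^6)\neq0$. So at the base point you describe, the center manifold theorem gives nothing. Your two-dimensional reduction (``split the zero eigenvalue'', then a KdV normal form) is just case (1) again. A double zero in a reversible Hamiltonian system can only split into a real or an imaginary pair, and this produces $\operatorname{sech}^2$-type waves, not the waves generated by the $1{:}1$ collision.

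\textbf{The missing idea.} You need to base the reduction at the codimension-two point $(\beta_*,\alpha_*)$. Here $\beta_*=\alpha_*^2\int_0^1 aI^2\dd\zeta$ is exactly the solvability condition for continuing the Jordan chain at zero, so that $\mathcal{L}w_1=0$ and $\mathcal{L}w_{j+1}=w_j$ for $j=1,2,3$, and zero is a quadruple eigenvalue.
\begin{itemize}
\item The center manifold is then four-dimensional.
\item The small real pair $\pm\kappa$ emerges from this quadruple zero along $\beta=\beta_*+\beta_2\kappa^2+O(\kappa^4)$, $\alpha=\alpha_*+\alpha_4\kappa^4+O(\kappa^6)$.
\item With $\varepsilon_1=\beta_2(1+\delta)\mu^2$, $\varepsilon_2=\alpha_4\mu^4$ and the weighted scaling $(q_1,q_2,p_1,p_2)\sim(\mu^7Q_1,\mu^5Q_2,\mu^4P_1,\mu^6P_2)$, the reduced two-degree-of-freedom system collapses to the reversible fourth-order equation $u_{XXXX}-2(1+\delta)u_{XX}+u-u^2=O(\mu)$.
\end{itemize}
The nondegeneracy condition in case (2) is the existence and persistence of a reversible homoclinic orbit of this fourth-order equation. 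This is a genuinely different normal form from your planar one. In particular, its linearization has a complex quartet for small $\delta<0$, which gives oscillatory tails and Devaney-type multipulse orbits that no two-dimensional reduction can capture.

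\textbf{A smaller point.} $\mathcal{N}$ is not $O(|w|^2)$ uniformly in the parameters, because parameter-dependent linear terms such as $\varepsilon_2\rho$ live in it. The correct hypothesis is $\mathcal{N}(0;\varepsilon)=0$ and $d\mathcal{N}[0;0]=0$.
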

% \begin{theorem}[Small-amplitude solitary waves]
% Let $\gamma\in H^1(m,0)$ and suppose that the corresponding laminar flow
% satisfies the non-stagnation condition $u-c>0$
% throughout the fluid. Assume that, for some parameter value
% $(\beta_0,\alpha_0)$,
% the linearized spatial-dynamics operator possesses either

% \begin{enumerate}
% \item  Hamiltonian $0^2$ resonance,
% \item  Real $1{:}1$ resonance, or
% \item  Hamiltonian--Hopf,
% \end{enumerate}
% and that the corresponding non-degeneracy conditions hold. Then there exists a neighborhood of $(\beta_0,\alpha_0)$ in which the
% steady $f$-plane capillary--gravity problem admits a family of nontrivial
% symmetric solitary-wave solutions bifurcating from the laminar flow.
% \end{theorem}
\begin{remark}
The non-degeneracy conditions in theorem above stated in subsections~4.1--4.3: in the \(0^2\) case the cubic
coefficient has to be nonzero, in the real \(1:1\) case the reduced fourth-order
normal form admits a reversible homoclinic orbit, and in the Hamiltonian--Hopf
case the normal-form coefficients must satisfy \(c_1<0<c_3\).
\end{remark}

The Hamiltonian \(0^2\) normal form also determines the leading-order polarity of the
bifurcating solitary waves. The result is a combined Coriolis--capillarity
criterion: elevation and depression are selected by both the side of the
capillarity threshold \(\beta_*\) and the side of the Coriolis threshold
\(\Omega_c\).

\begin{corollary}[Solitary Waves of Depression and Elevation]\label{cor:threshold}
Let $g_{*}^{(0^2)}$ denote the critical effective gravity at which the
Hamiltonian $0^2$ resonance occurs, and let $c^{(0^2)}>0$ be the associated
critical wave speed. We define the critical Coriolis threshold by
\[
    \Omega_c:=\frac{g-g_{*}^{(0^2)}}{2c^{(0^2)}}.
\]
Let $\beta_*$ be the critical capillarity value corresponding to the
codimension-two point $(\beta_*,\alpha_*)$, and let $\beta_0\ne\beta_*$ be
the capillarity parameter along the Hamiltonian $0^2$ branch. Then the
polarity of the small-amplitude solitary waves bifurcating from the
Hamiltonian $0^2$ resonance is determined as follows:
\[
\begin{cases}
\text{solitary waves of elevation}, & \beta_0<\beta_*
\ \text{and}\ \Omega>\Omega_c,\\[0.6ex]
\text{solitary waves of depression}, & \beta_0>\beta_*
\ \text{and}\ \Omega<\Omega_c.
\end{cases}
\]
\end{corollary}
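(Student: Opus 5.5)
The plan is to read off the polarity from the Hamiltonian $0^2$ normal form obtained on the two-dimensional center manifold. Near the $0^2$ point the reduced system, after rescaling, takes the KdV-type form
\[
q_{xx}=\mu\,a_1 q + a_2 q^2 + \text{h.o.t.},
\]
where $\mu$ is the distance in parameter space from the $0^2$ curve and $a_2$ is the cubic coefficient of the reduced Hamiltonian. Its nonvanishing is the nondegeneracy condition in the remark above. The homoclinic orbit is $q(x)=-\tfrac{3\mu a_1}{2a_2}\,\mathrm{sech}^2\!\bigl(\tfrac{\sqrt{\mu a_1}}{2}x\bigr)$, which exists for $\mu a_1>0$. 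Through the Dubreil--Jacotin height function, the free-surface elevation is $\eta=q\,\phi_0(0)+\dots$, where $\phi_0$ is the zero-mode eigenfunction of the Sturm--Liouville problem, normalized by $\phi_0(0)>0$. Consequently the sign of $\eta$ equals $-\operatorname{sign}(a_1/a_2)$.

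The first step is to compute $a_1$ and $a_2$ explicitly from the Sturm--Liouville data. The linear coefficient $a_1$ is proportional to the derivative of the boundary spectral curve with respect to the parameter $\alpha$. Since $\alpha$ is affine in $g_*=g-2\Omega c$, moving off the $0^2$ curve toward the side where solitary waves exist corresponds to a definite sign of $g_*-g_*^{(0^2)}$. At fixed $c^{(0^2)}$, this is exactly the sign of $\Omega_c-\Omega$. So the Coriolis condition $\Omega\gtrless\Omega_c$ fixes the sign of $a_1\mu$ as seen through the rotation parameter.

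The second step is to show that the sign of $a_2$ flips precisely at the codimension-two point $(\beta_*,\alpha_*)$. I would express $a_2$ as a linear function of $\beta$ of the form
\[
a_2 = C\,(\beta_*-\beta)
\]
with $C>0$. The constant $C$ comes from the integral $\int (u-c)^{-3}\phi_0'^3$-type term together with the surface-tension contribution $\beta\phi_0'(0)^3$. Positivity of $C$ uses the non-stagnation condition $\inf(u-c)>0$, and $\beta_*$ is defined as the zero of $a_2$ along the $0^2$ curve.

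Combining the two steps, for $\beta_0<\beta_*$ with $\Omega>\Omega_c$ we get $\eta>0$, which is elevation. Reversing both inequalities gives depression.

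The main obstacle is the sign bookkeeping. I need to check that $\mu a_1>0$, the condition for the homoclinic orbit to exist, is compatible with the stated Coriolis side in each case, and that the orientation of $\phi_0$ and of the height variable is consistent. I would first verify this against the classical irrotational limit with $\Omega=0$, where depression for $\beta>1/3$ is known, and use that case to fix all the signs.
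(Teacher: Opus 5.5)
There is a genuine gap in your Step 2, and it is structural rather than a matter of sign bookkeeping. The cubic coefficient of the reduced Hamiltonian does not depend on $\beta$ at all. Surface tension enters $K^\eps$ only through $-a(1)\beta/\sqrt{a(1)^2+\tau^2}$, which is even in $\tau$; moreover $\tau=0$ on the kernel vector $w_1=(1,0,\phi_1,0)$. So there is no surface-tension contribution to the cubic term, and $K^{0,0}_3[w_1,w_1,w_1]=-\tfrac12 C_*$ with $C_*=\alpha_*^3\int_0^1 a^{-5}\,\dd\zeta>0$. Likewise, $\beta_*$ is not the zero of a nonlinear coefficient. It is the value at which the quadratic quantity $\Psi(w_1,w_2)=2K^{0,0}_2[w_2,w_2]=\beta_0-\beta_*=:s$ vanishes and the Jordan chain lengthens. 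The paper therefore works in the unnormalized coordinates $u=Aw_1+Bw_2+\tilde r$, where $\eta=\rho=A+\dots$ because the $\rho$-component of $w_1$ is $1$. (Note that $\zeta=0$ is the bed here, so your $\phi_0(0)$ would vanish.) In these coordinates $\Psi=s\,\dd A\wedge\dd B$ and $\tilde H=\tfrac s2B^2-\tfrac\delta2A^2-\tfrac{C_*}{2}A^3+\dots$, with $\delta=\alpha-\alpha_*$. This gives $A_{xx}=\tfrac{\delta}{s}A+\tfrac{3C_*}{2s}A^2$. In your notation, both $\mu a_1$ and $a_2$ carry the factor $1/s$. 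Hence $a_2$ changes sign at $\beta_*$ through a pole, not a linear zero, and it has the sign of $\beta_0-\beta_*$, not of $\beta_*-\beta_0$. Working in this unnormalized basis is also what allows $s<0$ at all, since the normalization $e=s^{-1/2}w_1$ is real only for $s>0$.

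This changes the logic of both of your steps. Existence requires $\mu a_1=\delta/s>0$, so the admissible side of the $0^2$ curve flips with $\beta_0$. If, as your Step 1 asserts, the Coriolis side alone fixed $\operatorname{sgn}(\mu a_1)$, then one of the two cases of the corollary would be empty, because those cases have opposite Coriolis sides. Moreover, the amplitude $-\tfrac{3\mu a_1}{2a_2}=-\delta/C_*$ is independent of $s$, so $\operatorname{sgn}\eta=-\operatorname{sgn}(\alpha-\alpha_*)$. The polarity is therefore read directly from the Coriolis side via $\alpha-\alpha_*=-\tfrac{2c^{(0^2)}d^3}{m^2}(\Omega-\Omega_c)+\dots$, while $\beta_0\gtrless\beta_*$ enters only through the constraint $\delta/s>0$. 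Note also that with your stated $a_2=C(\beta_*-\beta)$, $C>0$, and $\mu a_1>0$, your own homoclinic formula gives $\operatorname{sgn}\eta=\operatorname{sgn}(\beta_0-\beta_*)$. That means depression for $\beta_0<\beta_*$, the opposite of what you conclude. Calibrating against the irrotational case could fix an overall sign. It cannot fix the misattribution of the $\beta$-dependence to the cubic coefficient, which is precisely the computation you proposed to carry out.
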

\begin{remark}
    The comparison with \(\Omega_c\) above is understood locally near the critical
wave speed \(c^{(0^2)}\), so that
\[
\alpha-\alpha_*
=
-\frac{2c^{(0^2)}d^3}{m^2}(\Omega-\Omega_c)
+\textup{higher-order terms}.
\]
\end{remark}
The next two theorems record our second contribution which is novel and tailored to the steady $f$-plane problem considered here. We show that in the weak-effective-gravity regime, the Hamiltonian $0^2$ resonance and Hamiltonian--Hopf phenomena disappear when working with flows with no stagnation points. The proofs of these theorems are recorded in Section~\ref{sec:weak-gravity}.
\begin{theorem}[Near-Stagnation Limit of the $0^2$ Resonance]\label{thm:nearstag}
Suppose that $c^{(0^2)}$ denotes the critical wave speed at which the linearized system exhibits a Hamiltonian $0^2$ resonance, and define the corresponding critical effective gravity as 
\[
g_{*}^{(0^2)} = g - 2\Omega c^{(0^2)}.
\] 
For any family of laminar flows satisfying \(a(\zeta)\ge a_{\min}>0\) uniformly,
the corresponding \(0^2\) critical effective gravity is bounded away from zero.
Consequently, \(g_*^{(0^2)}\to0\) can occur only along a family for which
\(\min_{\zeta\in[0,1]}a(\zeta)\to0\), equivalently the laminar flow approaches
stagnation.
\end{theorem}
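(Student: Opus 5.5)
The plan is to make the $0^2$ resonance condition explicit in terms of the laminar profile $a(\zeta)$, and then bound it from below using $a\ge a_{\min}$. By the reduction of Section~\ref{sec:CMR}, the linearized spatial-dynamics operator is equivalent to a scalar Sturm--Liouville problem on $[0,1]$. It has the divergence-form operator $-(a^3\phi')'$ (up to the weight coming from the Dubreil--Jacotin scaling), Dirichlet condition $\phi(0)=0$ at the bed, and the eigenvalue-dependent surface condition $a^3(1)\phi'(1)=(\alpha+\beta\mu)\phi(1)$, with $\mu$ the spectral parameter. The Hamiltonian $0^2$ resonance is the point where the boundary spectral curve passes through $\mu=0$, i.e.\ where $0$ becomes an eigenvalue, in the sense that the zero-frequency problem has a nontrivial solution.

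\textbf{Step 1: compute the critical $\alpha$.} At zero frequency the equation $(a^3\phi')'=0$ with $\phi(0)=0$ gives
\[
\phi(\zeta)=C\int_0^\zeta a(s)^{-3}\dd s .
\]
Inserting this into the boundary condition at $\mu=0$ gives
\[
\alpha^{(0^2)}=\Big(\int_0^1 a(\zeta)^{-3}\dd\zeta\Big)^{-1}.
\]
This should reproduce the curve identified in Section~\ref{sec:Resonances}; the normalization constants there should be matched. Since $\alpha=g_*d^3/m^2$, it follows that
\[
g_*^{(0^2)}=\frac{m^2}{d^3}\Big(\int_0^1 a^{-3}\Big)^{-1}.
\]

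\textbf{Step 2: lower bound.} If $a\ge a_{\min}>0$, then $\int_0^1a^{-3}\le a_{\min}^{-3}$, and hence
\[
g_*^{(0^2)}\ge \frac{m^2}{d^3}\,a_{\min}^3>0 .
\]
This bound is uniform along any family in which $m$ stays bounded below and $d$ stays bounded above, which is the standing nondimensionalization. This proves the first assertion.

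\textbf{Step 3: converse.} Suppose $g_*^{(0^2)}\to0$ along a family. Then $\int_0^1 a^{-3}\to\infty$. If $\min_{[0,1]} a$ stayed bounded below along a subsequence, Step~2 would contradict this. Hence $\min a\to0$, which by the definition of $a$ through $u-c$ means that the laminar flow approaches stagnation. Continuity of $a$ on $[0,1]$, which follows from $\gamma\in H^1$ via Sobolev embedding, guarantees that the minimum is attained.

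The main obstacle I expect is Step~1. I need to confirm that, in this paper's scaling, the eigenvalue-dependent boundary condition at the $0^2$ point reduces exactly to $\alpha\int a^{-3}=1$. The paper's weight or coefficient powers of $a$ may differ from those used above, for instance $a$ versus $a^3$, or an extra factor in the boundary term. Any such variant still takes the form $\alpha=(\int a^{-k})^{-1}$ with some power $k>0$, possibly times a bounded positive factor, and the argument of Steps~2--3 goes through unchanged.
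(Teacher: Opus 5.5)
Your proposal is correct and follows the paper's argument. Both identify $\alpha_*=\bigl(\int_0^1 a^{-3}\,\dd\zeta\bigr)^{-1}$ from the zero-frequency problem and bound $\int_0^1 a^{-3}\,\dd\zeta$ from above using $a\ge a_{\min}$; the only difference is that the paper writes $a^{-3}\le a_{\min}^{-2}a^{-1}$ and uses the normalization $\int_0^1 a^{-1}\,\dd\zeta=H(1)=1$, which gives the slightly sharper bound $\alpha_*\ge a_{\min}^2$ in place of your $a_{\min}^3$. Your remark that the bound transfers to $g_*^{(0^2)}=\frac{m^2}{d^3}\alpha_*$ only if $m^2/d^3$ stays bounded below is a point the paper leaves implicit.
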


% \begin{theorem}[Near-stagnation characterization]
% Let $g_*^{(0^2)}$ denote the effective gravity value corresponding to the $0^2$ resonance.
% Then
% \[
% g_*^{(0^2)}\to0
% \text{ if and only if }
% \inf_D (u-c)\to0.
% \]
% Hence, the weak-effective-gravity threshold can be approached by the $0^2$ resonance only through a near-stagnation regime.
% \end{theorem}
\begin{theorem}[Persistence and Separation from Weak-Effective Gravity]\label{thm:HH}
Let $H$ be an underlying laminar flow satisfying the strict non-stagnation condition $u-c>0$ uniformly throughout the fluid domain. Let $\sigma$ denote the surface tension coefficient and let $g_\ast = g - 2\Omega c$ denote the speed-dependent effective gravity. Assume that the natural nondegeneracy condition $\mathfrak{B}''(0)<0$ holds at the zero spectrum, where
\[
\mathfrak{B}(\nu)=\frac{a(1)^3M_\zeta(1;\nu)}{M(1;\nu)},
\]
where $M$ satisfies \eqref{eq:I-def-HH-proof}. Then the following properties hold:
\begin{enumerate}
    \item Smooth Persistence: The Hamiltonian–Hopf resonance curve persists smoothly as the imaginary spatial eigenvalue parameter $\mathfrak{q} \to 0$, terminating precisely at the codimension-two $0^4$ resonance point characterized by a critical surface tension $\sigma$ and a critical effective gravity $g^H_\ast$.
    \item Separation from Weak-Effective Gravity: The critical effective gravity at this codimension-two anchor point is strictly positive ($g^H_\ast > 0$). Consequently, the local Hamiltonian–Hopf curve is bounded away from, and cannot enter, the weak-effective-gravity regime where $g_\ast = g - 2\Omega c \approx 0$.
\end{enumerate}
\end{theorem}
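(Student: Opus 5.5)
The plan is to reduce everything to the scalar Sturm--Liouville problem with eigenvalue-dependent boundary condition from Section~\ref{sec:CMR}. For a real or imaginary spatial eigenvalue parameter $\nu$ (with $\nu=-\mathfrak{q}^2$ on the imaginary axis), we take $M(\cdot;\nu)$ to be the solution of
\[
(a^3 M_\zeta)_\zeta = \nu\, a\, M \quad\text{on }(0,1),\qquad M(0;\nu)=0,\ M_\zeta(0;\nu)=1,
\]
which is what \eqref{eq:I-def-HH-proof} should encode. Here $a=u-c$ in Dubreil--Jacotin variables. The dispersion relation then takes the form
\[
\mathfrak{B}(\nu)=\alpha+\beta\nu ,
\]
with $\alpha=g_*d^3/m^2$ and $\beta$ the capillarity parameter. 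The resonances are read off from how the line $\nu\mapsto\alpha+\beta\nu$ meets the graph of $\mathfrak{B}$:
\begin{itemize}
\item the Hamiltonian--Hopf curve is the set of $(\beta,\alpha)$ for which the line is tangent to $\mathfrak{B}$ at some $\nu=-\mathfrak{q}^2<0$;
\item the $0^4$ point is tangency at $\nu=0$ together with $\alpha=\mathfrak{B}(0)$.
\end{itemize}

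\textbf{Step 1: parametrize the Hamiltonian--Hopf curve.} Since $M(1;\nu)>0$ for $\nu$ near $0$, $\mathfrak{B}$ is analytic there, by analytic dependence of ODE solutions on parameters. The tangency conditions $\mathfrak{B}(\nu)=\alpha+\beta\nu$ and $\mathfrak{B}'(\nu)=\beta$ give
\[
\beta(\nu)=\mathfrak{B}'(\nu),\qquad \alpha(\nu)=\mathfrak{B}(\nu)-\nu\mathfrak{B}'(\nu).
\]
This is smooth in $\nu\le 0$, and hence smooth in $\mathfrak{q}^2$. We must also check that this tangency is genuinely a double imaginary pair, i.e. that it is not degenerate. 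That holds when $\mathfrak{B}''(\nu)\neq0$, which is guaranteed near $0$ by the hypothesis $\mathfrak{B}''(0)<0$. Letting $\mathfrak{q}\to0$ then gives the limit
\[
(\beta_*,\alpha_*)=\bigl(\mathfrak{B}'(0),\,\mathfrak{B}(0)\bigr).
\]
At this point $\nu=0$ is a double root of $\mathfrak{B}(\nu)-\alpha-\beta\nu$, so $0$ is a quadruple eigenvalue of the spatial operator, which is the $0^4$ point. The corresponding $\sigma$ and $g^H_*$ are obtained by undoing the nondimensionalization. This proves (1).

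\textbf{Step 2: positivity at the anchor.} At $\nu=0$ the ODE integrates explicitly. From $a^3M_\zeta=\text{const}=a(0)^3$ we get
\[
M(1;0)=\int_0^1 \frac{a(0)^3}{a^3}\,d\zeta ,\qquad a(1)^3M_\zeta(1;0)=a(0)^3 ,
\]
and therefore
\[
\alpha_*=\mathfrak{B}(0)=\Bigl(\int_0^1 a^{-3}\,d\zeta\Bigr)^{-1}.
\]
Under uniform non-stagnation $0<a_{\min}\le a\le a_{\max}$ this gives $\alpha_*\ge a_{\min}^3>0$. Hence $g^H_*=\alpha_*m^2/d^3>0$.

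\textbf{Step 3: separation.} The branch $\alpha(\nu)$ is continuous on $[-\nu_0,0]$ and $\alpha(0)>0$, so it stays bounded below by $\alpha_*/2$ on a neighborhood. This is the "local" Hamiltonian--Hopf curve, which therefore avoids $\alpha\approx0$. I also expect $\alpha'(\nu)=-\nu\mathfrak{B}''(\nu)$ to have a definite sign there, which gives monotonicity and a cleaner bound.

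\textbf{Main obstacle.} The main obstacle is Step 1, and specifically matching the paper's normalization of $M$ and its exact boundary relation: whether the relation is affine in $\nu$ with coefficient $\beta$ or involves extra factors such as $a(1)$. A second delicate point is verifying that $\mathfrak{B}''(0)<0$ corresponds to the genuine double-imaginary-pair structure rather than a fold of real eigenvalues. For this I would first compute $\mathfrak{B}''(0)$ by differentiating $M$ in $\nu$ twice via variation of constants, checking its sign against the classical case of constant $a$.
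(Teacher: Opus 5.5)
Your proposal is correct and follows the paper's proof essentially step for step: the same tangency parametrization of the quotient $\mathfrak{B}$, its limit at the $0^4$ point $(\beta_*,\alpha_*)$, and the explicit value $\alpha_*=\bigl(\int_0^1 a^{-3}\,d\zeta\bigr)^{-1}$ bounded below under non-stagnation (the paper gets $a_{\min}^2$ using $\int_0^1 a^{-1}\,d\zeta=1$; your $a_{\min}^3$ also works), followed by continuity for the separation.

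There is one harmless sign slip. With your convention $(a^3M_\zeta)_\zeta=\nu aM$ you have $\nu=-\kappa^2$, so the imaginary pair $\pm i\mathfrak q$ sits at $\nu=+\mathfrak q^2$ and your branch $\nu\le 0$ is actually the real $1{:}1$ curve. Since $\mathfrak{B}$ is analytic on both sides of $0$ and $\mathfrak{B}''(0)$ is unchanged by $\nu\mapsto-\nu$, the identical argument runs on $\nu\ge 0$.
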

The key new feature relative to the classical rotational capillary--gravity
problem is that the bifurcation parameter \(\alpha\) is not simply a fixed
gravity parameter, but depends on the wave speed through
\(g_*=g-2\Omega c\). This creates the weak-effective-gravity threshold
\(\alpha=0\), and the results above show that the classical low-frequency
resonances remain separated from this threshold unless the laminar flow
approaches stagnation.

%%%%%%%%%%%%%%%%%%

\section{Hamiltonian system formulation}\label{sec:governing}

We let $z=\eta(t,x,y)$ be the surface of the ocean and set $z=0$ as the mean surface level for the flow and $z=-d$ denoting the lower boundary of the layer to which equatorial undercurrent is confined. The $f$-plane governing equations are given as follows
\begin{equation}\label{time dependent fplane-euler}
\begin{cases}
(u_t+uu_x+vu_y+wu_z)+2\Omega w=-\dfrac{P_x}{\rho},\\[0.8ex]
v_t+uv_x+vv_y+wv_z=-\dfrac{P_y}{\rho},\\[0.8ex]
w_t+uw_x+vw_y+ww_z-2\Omega u=-\dfrac{P_z}{\rho}-g,
\end{cases}
\end{equation}
where $\Omega$ is the constant rotational speed of the earth around the polar axis towards east, $\rho$ is the density constant of the water, and $g$ is the gravitational acceleration constant. 
Under the assumption of constant density , the continuity equation reads:
\[
u_x+v_y+w_z=0.
\]
Additionally, we also have the following kinematic boundary condition on the free surface
\[
w=\eta_t+u\eta_x + v\eta_y \qquad \text{on } z=\eta(t,x,y).
\]
We assume that  the ocean floor is impermeable which translates to
\[
w=0, \qquad  \text{on } z=-d.
\]

In the present work, we are particularly interested in the two-dimensional waves moving the zonal direction along the equator line, independent of the $y$. This condition allows us to say that $v \equiv 0$ throughout the fluid. Following this assumption, therefore, the vertical vorticity is given by the scalar 
\[
\gamma=u_z-w_x.
\]

The fluid occupies the domain
\[
D:=\{(x,z)\in\R^2:-d<z<\eta(x)\}.
\]
In the moving frame of reference with speed $c>0$ combined with the vanishing assumption of the velocity component $v$, the bulk equations now become the steady $f$-plane equations
\begin{equation}\label{eq:fplane-euler}
\begin{cases}
(u-c)u_x+w u_z+2\Omega w=-\dfrac{P_x}{\rho},\\ \\[0.8ex]
(u-c)w_x+w w_z-2\Omega u=-\dfrac{P_z}{\rho}-g,
\end{cases}
\end{equation}
with bed condition
\begin{equation}\label{eq:bed-bc}
    w=0 \qquad \text{on } z=-d,
\end{equation}
and kinematic condition on the free surface
\begin{equation}\label{eq:kinematic}
    w=(u-c)\eta_x \qquad \text{on } z=\eta(x).
\end{equation}
The continuity equation  then becomes
\[
u_x+w_z=0.
\]
Furthermore, we impose the no-stagnation condition throughout the fluid which reads
\begin{equation}\label{eq:no-stag}
    u-c>0 \qquad \text{in } D.
\end{equation}Our convention here differs from the usual no-stagnation condition in the water-wave literature where commonly the inequality is given by $u-c<0$.

Additionally, we take into account the surface tension which plays a role as a restoring force on the free surface. It enters through the Laplace--Young law
\begin{equation}\label{eq:laplace-young}
    P=P_{\mathrm{atm}}-\sigma\left(\frac{\eta_x}{\sqrt{1+\eta_x^2}}\right)_x
    \qquad \text{on } z=\eta(x),
\end{equation}
where $\sigma>0$ is the coefficient of surface tension.

\subsection{Stream function formulation}
In the following subsection, we recast the $f$-plane problem in terms of the stream function $\psi$.
We define the stream function (up to a constant) $\psi$ by
\begin{equation}\label{eq:stream}
    \psi_z=u-c,
    \qquad
    \psi_x=-w.
\end{equation}
The no-stagnation condition \eqref{eq:no-stag} translates to $\psi_z>0$. We fix the additive constant so that
\begin{equation}\label{eq:stream-top}
    \psi=0 \qquad \text{on } z=\eta(x),
\end{equation}
and we let
\begin{equation}\label{eq:stream-bottom}
    \psi=m \qquad \text{on } z=-d,
\end{equation}
where $m<0$ is the relative mass flux given by
\[
\int_{-d}^{\eta(x)}(c-u(x,z))\;dz <0.
\]

Assuming that the vorticity is prescribed by a function of the streamlines,
\begin{equation}\label{eq:vorticity-law}
    u_z-w_x=\gamma(\psi),
\end{equation}
a standard Bernoulli calculation in the $f$-plane setting yields the hydraulic head
\[
E=\frac{(u-c)^2+w^2}{2}+(g-2\Omega c)z+\frac{P}{\rho}-2\Omega\psi+\int_0^{\psi}\gamma(s)\,\dd s, \qquad \text{in } D
\]
which is constant in the fluid. After absorbing the atmospheric pressure into the Bernoulli constant, the free-surface condition becomes
\begin{equation}\label{eq:bernoulli-stream}
    |\nabla\psi|^2+2g_* z-2\sigma\left(\frac{\eta_x}{\sqrt{1+\eta_x^2}}\right)_x=Q
    \qquad \text{on } z=\eta(x),
\end{equation}
where $g_*$ is defined by 
\begin{equation}\label{eq:effective-gravity}
    g_*:=g-2\Omega c.
\end{equation}
Therefore, the steady capillary $f$-plane problem takes the form
\begin{equation}\label{eq:stream-problem}
\begin{cases}
\Delta\psi=\gamma(\psi) & \text{in } -d<z<\eta(x),\\
\psi=0 & \text{on } z=\eta(x),\\
\psi=m & \text{on } z=-d,\\
|\nabla\psi|^2+2g_* z-2\sigma\left(\dfrac{\eta_x}{\sqrt{1+\eta_x^2}}\right)_x=Q & \text{on } z=\eta(x).
\end{cases}
\end{equation}

\subsection{Height formulation and variational structure and Euler--Lagrange}\label{sec:height}

We introduce the Dubreil--Jacotin hodograph transformation $\mathfrak{H}:D \rightarrow T$ defined by 
\[
\mathfrak{H}(x,z)=(x,\psi(x,z))=:(x,\zeta),
\] where $T:=\mathbb{R}\times [m,0]$. It is known  that $\mathfrak{H}$, is in fact a diffeomorphism see \cite{MartinMatioc2014}. Furthermore, in order to achieve the desired Hamiltonian formulation for our later analysis, we need to perform another transformation to \eqref{eq:stream-problem}. To that end, we define the height function $h:T\rightarrow \mathbb{R}$ as follows
\[
h(x,\zeta)=z+d.
\]

Via simple computations, we obtain standard identities involving the derivatives of $h$ and $\psi$, namely
\begin{equation}\label{eq:height-identities}
    h_x=-\frac{\psi_x}{\psi_z},
    \qquad
    h_\zeta=\frac{1}{\psi_z}>0.
\end{equation}
As a result in the $(x,\zeta)$ variables, the bulk equation in \eqref{eq:stream-problem} reads
\begin{equation}\label{eq:height-eq}
    (1+h_x^2)h_{\zeta\zeta}-2h_\zeta h_x h_{\zeta x}+h_\zeta^2h_{xx}+\gamma(\zeta)h_\zeta^3=0
    \qquad \text{in } T,
\end{equation}
while the condition on the bed becomes
\begin{equation}\label{eq:height-bed}
    h=0 \qquad \text{on } \zeta=m.
\end{equation}
At the free surface $\zeta=0$, one has $\eta(x)=h(x,0)-d$, and the Bernoulli equation in \eqref{eq:stream-problem} reads
\begin{equation}\label{eq:height-top}
    \frac{1+h_x^2}{2h_\zeta^2}+g_*(h-d)-\sigma\left(\frac{h_x}{\sqrt{1+h_x^2}}\right)_x=\frac{Q}{2}
    \qquad \text{on } \zeta=0.
\end{equation}

We define the function $\Gamma$, associated with the general vorticity distribution $\gamma$, by
\begin{equation}\label{eq:Gamma-def}
    \Gamma(\zeta):=\int_m^\zeta\gamma(s)\,\dd s.
\end{equation}
Then \eqref{eq:height-eq}, \eqref{eq:height-bed}, and \eqref{eq:height-top} can be viewed formally as the Euler--Lagrange equations of the action
\begin{equation}\label{eq:action}
    \cJ[h]=\int_{\R}\left[\int_m^0\left(-\frac{1+h_x^2}{2h_\zeta}+\Big(g_*(h-d)-\frac{Q}{2}-\Gamma(\zeta)\Big)h_\zeta\right)\dd \zeta+\sigma\sqrt{1+\eta_x^2}\right]\dd x,
\end{equation}
where $\eta(x)=h(x,0)-d$.

Following the capillary-gravity Hamiltonian construction in \cite{GrovesWahlen2007}, we introduce the Legendre variables
\begin{equation}\label{eq:legendre}
    \omega:=\frac{\delta\cJ}{\delta\eta_x}=\sigma\frac{\eta_x}{\sqrt{1+\eta_x^2}},
    \qquad
    w:=\frac{\delta\cJ}{\delta h_x}=-\frac{h_x}{h_\zeta},
\end{equation}
which yield
\begin{equation}\label{eq:legendre-inverse}
    \eta_x=\frac{\omega}{\sqrt{\sigma^2-\omega^2}},
    \qquad
    h_x=-h_\zeta w.
\end{equation}
It is not hard to see that the Legendre transform of \eqref{eq:action} gives rise to the Hamiltonian
\begin{equation}\label{eq:hamiltonian}
\mathcal{H}(\eta,\omega,h,w)=\int_m^0\left[\frac12\left(\frac{1}{h_\zeta}-h_\zeta w^2\right)+\Gamma(\zeta)h_\zeta\right]\dd \zeta-\frac{g_*}{2}(\eta)^2+\frac{Q}{2}(\eta+d)-\sqrt{\sigma^2-\omega^2},
\end{equation}
up to an additive constant.

We now formally define the underlying functional spaces of the Hamiltonian variables above. Let
\[
X:=\{(\eta,\omega,h,w)\in\R\times\R\times H^1(m,0)\times L^2(m,0): h(m)=0,\ h(0)=\eta+d\},
\]
\[
Y:=\{(\eta,\omega,h,w)\in\R\times\R\times H^2(m,0)\times H^1(m,0): h(m)=0,\ h(0)=\eta+d\},
\]
and we equip $X$ with the canonical symplectic form
\begin{equation}\label{eq:symplectic}
\bm{\mathfrak{A}}\big((\eta_1,\omega_1,h_1,w_1),(\eta_2,\omega_2,h_2,w_2)\big)=\int_m^0(w_2h_1-w_1h_2)\dd \zeta+\omega_2\eta_1-\omega_1\eta_2.
\end{equation}
We also assume that the vorticity function $\gamma \in L^2 (m,0)$, so that $\Gamma \in H^1(m,0)$. Observe that the space $Y$ is slightly more regular than $X$. Ultimately, the Hamiltonian vector field belongs to subspace of $Y$. That leads us to define the set $\mathcal{O} \subset Y$ by
\[
\mathcal{O}:=\{(\eta,\omega,h,w)\in Y: |\omega|< \sigma, |\eta|\leq \eta_0, h_\zeta>0, \text{ for fixed } \zeta\in (m,0) \text{ and } 0<\eta_0\ll1\}.
\]
The vector field obtained from Hamilton equations below belongs to the space $\mathcal{O}$. 

Via direct calculations, the Hamilton equations associated with the Hamiltonian \eqref{eq:hamiltonian} are given by
\begin{equation}\label{eq:hamilton-eqs}
\begin{cases}
\eta_x=\dfrac{\omega}{\sqrt{\sigma^2-\omega^2}},\\[0.8ex]
\omega_x=\dfrac12\left(w(0)^2+\dfrac{1}{h_\zeta(0)^2}\right)+g_*(\eta)-\dfrac{Q}{2},\\[0.8ex]
h_x=-h_\zeta w,\\[0.8ex]
w_x=-\dfrac12\left(w^2+\dfrac{1}{h_\zeta^2}\right)_\zeta+\gamma(\zeta),
\end{cases}
\end{equation}
with boundary conditions
\begin{equation}\label{eq:ham-domain}
    w(m)=0,
    \qquad
    h_\zeta(0)w(0)=-\frac{\omega}{\sqrt{\sigma^2-\omega^2}}.
\end{equation}
Notice that the system is reversible under the involution
\begin{equation}\label{eq:reverser}
    S(\eta,\omega,h,w)=(\eta,-\omega,h,-w).
\end{equation}
Here, reversibility means that the equations do not distinguish between moving to the right in $x$ and moving to the left in $x$, provided the momenta are sign-reversed.
Concretely, if $(\eta(x), \omega(x),h(x,\zeta), w(x,\zeta))$ solves the Hamiltonian system, then so does $(\eta(-x), -\omega(-x),h(-x,\zeta),\\ -w(-x,\zeta))$. Hence, we conclude that the steady $f$-plane capillary-gravity problem \eqref{eq:stream-problem} is formally equivalent to the reversible Hamiltonian system \eqref{eq:hamilton-eqs}--\eqref{eq:ham-domain} on $(X,\bm{\mathfrak{A}})$ with Hamiltonian \eqref{eq:hamiltonian}.

% \begin{proposition}\label{prop:ham-formulation}

% \end{proposition}

% \begin{proof}
% The derivation is exactly the Legendre-transform calculation recorded above. The bulk terms come from the height equation, while the capillary boundary term furnishes the momentum variable $\omega$ and prevents degeneracy of the surface Legendre transform.
% \end{proof}

%\section{Laminar flows and solitary-wave variables}\label{sec:laminar}

Since the bifurcation studied later is started at a laminar flow, it is therefore worth defining what this trivial flow is. The laminar flows are $x$-independent solutions of the height equation. They take the form
\begin{equation}\label{eq:laminar}
    H(\zeta;\lambda)=\int_m^\zeta\frac{\dd s}{\sqrt{\lambda+2\Gamma(s)}},
\end{equation}
where $\lambda$ is a constant.
For later use, we define 
\begin{equation}\label{eq:a-def}
    a(\zeta):=H_\zeta^{-1}(\zeta)=\sqrt{\lambda+2\Gamma(\zeta)}.
\end{equation}
The following lemma states that for any given vorticity distribution $\gamma$, there exists a unique constant $\lambda_\ast$ which gives rise to a laminar flow.
\begin{lemma}\label{laminarsol}
    Assume that the vorticity function $\gamma$ is such that $\Gamma \in H^1(m,0)$. Suppose that 
    \[
    \int_{m}^{0}\frac{ds}{\sqrt{2\Gamma(s)-2\Gamma_{\textup{min}}}}>d,
    \] where $\Gamma_{\textup{min}}:= \min_{s\in[m,0]} \Gamma(s)$ and $\Gamma_{\textup{max}}:= \max_{s\in[m,0]} \Gamma(s)$.
    Then there exists a unique $\lambda_\ast>-2\Gamma_{\textup{min}}$ for which the Hamilton equations \eqref{eq:hamilton-eqs} admits the laminar solution
    \[
    (\eta,\omega,h,w)=(0,0,H(\zeta;\lambda_\ast),0),
    \]
     $H(m;\lambda_\ast)=0, H(0;\lambda_\ast)=d$ and $Q=\lambda_\ast+2\Gamma(0)$.
\end{lemma}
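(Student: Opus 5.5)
We reduce the statement to a one-variable monotonicity argument for the depth map
\[
F(\lambda):=H(0;\lambda)=\int_m^0\frac{\dd s}{\sqrt{\lambda+2\Gamma(s)}},\qquad \lambda>-2\Gamma_{\min}.
\]
First we observe what a laminar solution requires. Since \eqref{eq:laminar} is $x$-independent with $\eta=\omega=w=0$, the first and third Hamilton equations in \eqref{eq:hamilton-eqs} and the boundary conditions \eqref{eq:ham-domain} hold trivially. The fourth equation becomes
\[
0=-\tfrac12\big(H_\zeta^{-2}\big)_\zeta+\gamma(\zeta)=-\tfrac12\big(\lambda+2\Gamma\big)_\zeta+\gamma=0,
\]
which holds identically because $\Gamma'=\gamma$. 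The second equation, evaluated at $\zeta=0$ with $\eta=0$, reads $\tfrac12 H_\zeta(0)^{-2}=Q/2$. This forces $Q=a(0)^2=\lambda+2\Gamma(0)$. The constraint $H(m;\lambda)=0$ is automatic from the lower limit of integration. Moreover, $H\in H^2$ and $H_\zeta>0$ whenever $\lambda+2\Gamma>0$ on $[m,0]$, i.e.\ $\lambda>-2\Gamma_{\min}$, since $\Gamma\in H^1\subset C^0$. So the only genuine condition is $H(0;\lambda)=\eta+d=d$, i.e.\ $F(\lambda)=d$.

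Next we study $F$ on $(-2\Gamma_{\min},\infty)$. The integrand is continuous and strictly decreasing in $\lambda$ for each $s$, so $F$ is strictly decreasing. It is continuous by dominated convergence: on $[\lambda_0,\infty)$ with $\lambda_0>-2\Gamma_{\min}$, the integrand is bounded by $(\lambda_0+2\Gamma_{\min})^{-1/2}$. As $\lambda\to\infty$, $F(\lambda)\le |m|(\lambda+2\Gamma_{\min})^{-1/2}\to0$.

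As $\lambda\downarrow-2\Gamma_{\min}$, the integrand increases pointwise to $(2\Gamma(s)-2\Gamma_{\min})^{-1/2}$. By monotone convergence, $F(\lambda)$ tends to the integral in the hypothesis, which exceeds $d$; this holds even if that integral is $+\infty$. By the intermediate value theorem there is some $\lambda_*$ with $F(\lambda_*)=d$, and strict monotonicity makes it unique.

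The only point needing care is this endpoint limit, where the integrand may blow up at points where $\Gamma$ attains its minimum. Monotone convergence handles it cleanly, so we expect no real obstacle. Finally, uniqueness is meant among $\lambda>-2\Gamma_{\min}$: for other $\lambda$, $a$ fails to be real or $H_\zeta$ is not defined on all of $[m,0]$.
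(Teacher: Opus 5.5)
Your proposal is correct and follows the paper's proof essentially step for step. Both use strict monotonicity of $\lambda\mapsto H(0;\lambda)$, monotone convergence as $\lambda\downarrow-2\Gamma_{\min}$, the bound $|m|(\lambda+2\Gamma_{\min})^{-1/2}$ as $\lambda\to\infty$, the intermediate value theorem, and direct verification of the Hamilton equations, which yields $Q=\lambda_*+2\Gamma(0)$. Your dominated-convergence argument for the continuity of $F$ supplies a step the paper uses only implicitly when it invokes the intermediate value theorem.
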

\begin{proof}
%    \begin{lemma}
% Assume that the vorticity function $\gamma$ is such that
% $\Gamma\in H^1(m,0)$, where
% \[
%     \Gamma(\zeta):=\int_m^\zeta \gamma(s)\,ds.
% \]
% Let
% \[
%     \Gamma_{\min}:=\min_{s\in[m,0]}\Gamma(s),
%     \qquad
%     \Gamma_{\max}:=\max_{s\in[m,0]}\Gamma(s).
% \]
% Suppose that
% \[
%     \int_m^0
%     \frac{ds}{\sqrt{2\Gamma(s)-2\Gamma_{\min}}}
%     > d,
% \]
% where the integral is understood as an improper integral if the
% denominator vanishes. Then there exists a unique
% $\lambda_* > -2\Gamma_{\min}$ such that
% \[
%     H(\zeta;\lambda_*)
%     :=
%     \int_m^\zeta
%     \frac{ds}{\sqrt{\lambda_*+2\Gamma(s)}}
% \]
% satisfies
% \[
%     H(m;\lambda_*)=0,
%     \qquad
%     H(0;\lambda_*)=d.
% \]
% Moreover, the Hamilton equations admit the laminar solution
% \[
%     (\eta,\omega,h,w)
%     =
%     \bigl(0,0,H(\zeta;\lambda_*),0\bigr),
% \]
% provided the Bernoulli constant is chosen as
% \[
%     Q=\lambda_*+2\Gamma(0).
% \]
% \end{lemma}

Let $\lambda>-2\Gamma_{\min}$ and by definition
\[
   H(0;\lambda)
    =
    \int_m^0
    \frac{ds}{\sqrt{\lambda+2\Gamma(s)}}.
\]
The condition $\lambda>-2\Gamma_{\min}$ guarantees that $ \lambda+2\Gamma(s)>0 \text{ for all } s\in[m,0].$ Hence, $H(0;\lambda)$ is finite and positive. Moreover, if
$\lambda_1<\lambda_2$, then it is easy to see that
\[
    \frac{1}{\sqrt{\lambda_1+2\Gamma(s)}}
    >
    \frac{1}{\sqrt{\lambda_2+2\Gamma(s)}}
    \qquad \text{for all } s\in[m,0].
\]
Therefore, $H(0;\lambda)$ is strictly decreasing as a function of $\lambda$  on
$(-2\Gamma_{\min},\infty)$.

 By monotone convergence,
\[
    \lim_{\lambda\downarrow -2\Gamma_{\min}}
    H(0;\lambda)
    =
    \int_m^0
    \frac{ds}{\sqrt{2\Gamma(s)-2\Gamma_{\min}}},
\]
where the integral may be infinite. By using the fact that $H(0;\lambda)$ is decreasing in $\lambda$, this limit is strictly
larger than $d$. On the other hand,
\[
    0\leq H(0;\lambda)
    \leq
    \frac{|m|}{\sqrt{\lambda+2\Gamma_{\min}}},
\]
which implies
\[
    \lim_{\lambda\to\infty}H(0;\lambda)=0.
\]
Hence, the intermediate
value theorem gives a unique
\[
    \lambda_* > -2\Gamma_{\min}
\]
such that
\[
    H(0;\lambda_*)=
    \int_m^0
    \frac{ds}{\sqrt{\lambda_*+2\Gamma(s)}}
    =d,
\]
while clearly $H(m;\lambda_*)=0$.

It remains to verify that this choice gives a laminar solution of the
Hamilton equations. Let
\[
    h(\zeta)=H(\zeta;\lambda_*), \qquad \eta=0,\qquad \omega=0,\qquad w=0.
\]
Then,
\[
    h_\zeta(\zeta)=\frac{1}{\sqrt{\lambda_*+2\Gamma(\zeta)}}>0,
\]
so the no-stagnation condition is satisfied. Since the solution is
$x$-independent and $w=0$, the equations
\[
    \eta_x=\frac{\omega}{\sqrt{\sigma^2-\omega^2}},
    \qquad
    h_x=-h_\zeta w
\]
are automatically satisfied. By simple algebra calculations, the first, third, fourth equations in \eqref{eq:hamilton-eqs} and boundary conditions \eqref{eq:ham-domain} are clearly satisfied. It remains to check that the laminar solution satisfies the second Hamilton equation. The first and third equations in \eqref{eq:hamilton-eqs} together with the boundary conditions \eqref{eq:ham-domain} clearly hold for $\eta=0$, $w=0$ and $\omega=0$. Moreover,
substituting $\eta=0$, $w=0$, and
\[
    \frac{1}{H_\zeta(0)^2}
    =
    \lambda_*+2\Gamma(0),
\]
we obtain
\[
    \omega_x
    =
    \frac12\bigl(\lambda_*+2\Gamma(0)\bigr)
    -\frac{Q}{2}.
\]
Thus $\omega_x=0$ precisely when
\[
    Q=\lambda_*+2\Gamma(0).
\]
It remains to show that the fourth equation \eqref{eq:hamilton-eqs} also holds. Note, using the fact that $\Gamma'(\zeta)=\gamma(\zeta),$ $H_\zeta(\zeta)=\frac{1}{\sqrt{\lambda_\ast+2\Gamma(\zeta)}}, $ and $H_{\zeta\zeta}(\zeta)=\frac{-\gamma(\zeta)}{(\lambda_\ast+2\Gamma(\zeta))^{3/2}},$ we obtain 
\begin{equation}
    \begin{aligned}
        -\dfrac12\left(\dfrac{1}{H_\zeta^2}\right)_\zeta+\gamma(\zeta)&=-\dfrac12\left(\frac{-2 H_{\zeta\zeta}}{H_\zeta^3}\right)+\gamma(\zeta)\\&
        =0.
    \end{aligned}
\end{equation}
Hence, the fourth equation is satisfied.
 Therefore,
\[
    (\eta,\omega,h,w)
    =
    \bigl(0,0,H(\zeta;\lambda_*),0\bigr)
\]
is a laminar solution. The uniqueness of $\lambda_*$ follows from the
strict monotonicity of $H(0;\lambda)$.
\end{proof}

\subsection{Nondimensionalization}\label{sec:nondim}

To prepare for the spectral analysis of the linearized operator as well as the center manifold analysis, we need to 
rewrite the problem in dimensionless variables. The natural vertical length scale is the
undisturbed depth $d$, while the natural stream-function scale is the relative mass flux $m$.
Accordingly, we introduce
\begin{equation}\label{eq:nondim-vars}
x=d\,\tilde x,
\qquad
z=d\,\tilde z,
\qquad
\eta=d\,\tilde \eta,
\qquad
h=d\,\tilde h,
\qquad
\zeta=m(1-\tilde \zeta).
\end{equation}
Notice that \(\tilde \zeta=0\) corresponds to the bed and \(\tilde \zeta=1\) corresponds to the free surface.
For the momentum variable and the physical coefficients, we set
\begin{equation}\label{eq:nondim-params}
\omega=\frac{m^2}{d}\,\tilde \omega,
\qquad
\sigma=\frac{m^2}{d}\,\tilde \beta,
\qquad
g_{*}=\frac{m^2}{d^3}\,\tilde \alpha,
\qquad
Q=\frac{m^2}{d^2}\,\tilde \mu.
\end{equation}

In the remainder of the article, we will use the nondimensionalized variables throughout. However, we will abuse the notation by dropping ``tilde" when writing dimensionless variables or parameters. To that end, we introduce the main dimensionless parameters 
\begin{equation}\label{eq:alpha-beta-mu-def}
\alpha:=\frac{g_*d^3}{m^2},
\qquad
\beta:=\frac{\sigma d}{m^2},
\qquad
\mu:=\frac{Qd^2}{m^2}.
\end{equation}
 Additionally, the rescaled vorticity and its integral are denoted again by \(\gamma\) and \(\Gamma\), respectively.
 
Therefore, in the dimensionless setting, the Hamiltonian system \eqref{eq:hamilton-eqs} becomes
\begin{equation}\label{eq:hamilton-eqs-nondim}
\begin{cases}
\eta_x=\dfrac{\omega}{\sqrt{\beta^2-\omega^2}},\\[0.8ex]
\omega_x=\dfrac12\left(w(1)^2+\dfrac{1}{h_\zeta(1)^2}\right)+\alpha \eta-\dfrac{\mu}{2},\\[0.8ex]
h_x=-h_\zeta w,\\[0.8ex]
w_x=-\dfrac12\left(w^2+\dfrac{1}{h_\zeta^2}\right)_\zeta+\gamma(\zeta),
\end{cases}
\end{equation}
with boundary constraints
\begin{equation}\label{eq:ham-domain-nondim}
w(0)=0,
\qquad
h_\zeta(1)w(1)=-\frac{\omega}{\sqrt{\beta^2-\omega^2}}.
\end{equation}

In this paper, we are interested in solitary wave solutions that are obtained from perturbing the laminar flows.  Mathematically, we are looking for solutions of \eqref{eq:hamilton-eqs-nondim} and \eqref{eq:ham-domain-nondim} in the following  form
\begin{equation}\label{eq:shift}
    \eta(x)=\rho(x),
    \qquad
    h(x,\zeta)=H(\zeta)+\phi(x,\zeta).
\end{equation}
This forces $\phi$ to satisfy the conditions
\[
\phi(0)=0,
\qquad
\phi(1)=\rho,
\qquad
h_\zeta=a^{-1}+\phi_\zeta.
\]
In the shifted variables, after substituting \eqref{eq:shift} and $h_\zeta=a^{-1}+\phi_\zeta$, the entire system \eqref{eq:hamilton-eqs-nondim} and \eqref{eq:ham-domain-nondim} becomes
\begin{equation}\label{eq:shifted-system}
\begin{cases}
\rho_x=\dfrac{\omega}{\sqrt{\beta^2-\omega^2}},\\[0.8ex]
\omega_x=\dfrac12\left(w(1)^2+\dfrac{a(1)^2}{(1+a(1)\phi_\zeta(1))^2}\right)+\alpha\rho-\dfrac{\mu}{2},\\[0.8ex]
\phi_x=-(a^{-1}+\phi_\zeta)w,\\[0.8ex]
w_x=-\dfrac12\left(w^2+\dfrac{a(\zeta)^2}{(1+a(\zeta)\phi_\zeta)^2}\right)_\zeta+\gamma(\zeta),
\end{cases}
\end{equation}
with boundary condition
\begin{equation}\label{eq:shifted-bc}
    w(0)=0,
    \qquad
    (a(1)^{-1}+\phi_\zeta(1))w(1)=-\frac{\omega}{\sqrt{\beta^2-\omega^2}}.
\end{equation} Notice that the origin $(\rho,\omega,\phi,w)=(0,0,0,0)$ is the equilibrium point of the Hamilton equations above. 
%\section{Reference point}

Recall that from equation \eqref{eq:alpha-beta-mu-def}, the variable
\(\alpha\) plays the role of the  effective-gravity
parameter and \(\beta\) plays the role of the capillarity parameter. We now choose a fixed reference point $(\beta_0,\alpha_0)$
in the \((\beta,\alpha)\)-parameter plane and write the nearby parameter
values as
\begin{equation}\label{eq: perturbation param}
     (\beta,\alpha)
    =
    (\beta_0+\varepsilon_1,\alpha_0+\varepsilon_2),\quad \text{where } \varepsilon=(\varepsilon_1,\varepsilon_2).
\end{equation}
In practice, as seen later, the reference point \((\beta_0,\alpha_0)\) will be chosen so
that the linearized spatial-dynamics problem has the desired critical spectral
configuration. To put things into perspective, if \(g\), \(\Omega\), \(d\), and \(m\) are fixed (but not the wave speed $c$), then perturbing
\(\alpha\) above is equivalent to perturbing \(c\). Indeed,
for some wave speed $c_0$,
\[
    \alpha_0
    =
    \frac{(g-2\Omega c_0)d^3}{m^2},
\]
then the perturbation $\varepsilon_2$ can be thought of as a perturbation away from the reference wave speed
\[
    \varepsilon_2
    =
    \alpha-\alpha_0
    =
    -\frac{2\Omega d^3}{m^2}(c-c_0).
\]

By Lemma~\ref{laminarsol}, suppose that the admissible constant $\mu=\mu_{\ast}$ is chosen that gives rise to a laminar flow. Since the Bernoulli constant is fixed here, it can no longer be treated as an independent bifurcation parameter as is commonly done when doing bifurcation approach see \cite{ConstStrauss2004}. Writing $\beta$ and $\alpha$ using $\beta_0$ and $\alpha_0$, and fixing the Bernoulli constant $\mu_\ast$ yields the following Hamilton equations
\begin{equation}\label{perturbed HE}
\begin{cases}
\rho_x=\frac{\omega}{\sqrt{(\beta_0+\varepsilon_1)^2-\omega^2}},\\[0.8ex]
\omega_x=\frac12\left[w(1)^2+\frac{a(1)^2}{\left(1+a(1)\phi_\zeta(1)\right)^2}\right]+(\alpha_0+\varepsilon_2)\rho-\frac{\mu_\ast}{2},\\[0.8ex]
 \phi_x=-\left(a(\zeta)^{-1}+\phi_\zeta\right)w,\\[0.8ex]
 w_x=-\frac12\left[w^2+\frac{a(\zeta)^2}{\left(1+a(\zeta)\phi_\zeta\right)^2}\right]_\zeta +\gamma(\zeta),
\end{cases}
\end{equation}

with boundary condition
\begin{equation}\label{eq:bc-reference point}
    w(0)=0,
    \qquad
    (a(1)^{-1}+\phi_\zeta(1))w(1)=-\frac{\omega}{\sqrt{(\beta_0+\varepsilon_1)^2-\omega^2}}.
\end{equation}
We note that the perturbations $(\varepsilon_1,\varepsilon_2)$ are small and chosen in the neighborhoods so that
\[
    |\varepsilon_1|<\frac{\beta_0}{4},\qquad   |\omega|<\frac{\beta_0}{2}<\beta_0+\varepsilon_1.
\]
The last inequality above ensures that the quantity $\sqrt{(\beta_0+\varepsilon_1)^2-\omega^2}$ is real and bounded away from zero.
We also impose the condition
\[
    a(\zeta)^{-1}+\phi_\zeta(\zeta)>0,
    \qquad 0\leq \zeta\leq 1,
\]
so that the height transform remains invertible. Equivalently, this inequality is simply 
 the no-stagnation condition $h_\zeta(\zeta)>0$, but expressed in the shifted variables.
\section{Center-manifold reduction}\label{sec:CMR}
\subsection{Application of the center-manifold reduction theorem}
Here, we outline the center manifold reduction theorem of \cite{Mielke1988}. We list the necessary hypotheses that must be satisfied before applying the center-manifold theorem to our problem. 
\begin{theorem}\label{CMT}
    Consider the differential equation
    \begin{equation}\label{DE problem}
         u_x=\mathcal{L}u+\mathcal{N}(u;\varepsilon),
    \end{equation}
    which represents Hamilton equations for the reversible Hamiltonian system $(\mathcal{X}, \Omega^\varepsilon, H^\varepsilon)$, where $\Omega^\varepsilon$ is the symplectic form. Here, the solution $u$ belongs to a Hilbert space $\mathcal{X},\lambda \in \mathbb{R}^l$ is a parameter and $\mathcal{L}:D(\mathcal{L})\subset \mathcal{X}\to \mathcal{X}$ is a densely defined, closed linear operator. Suppose that $0$ is an equilibrium point of \eqref{DE problem} when $\varepsilon=0$ and that 

\begin{enumerate}
    \item[]\label{H1} $(A1)$ The part of the spectrum $\sigma(\mathcal{L})$ of $\mathcal{L}$ which lies on the imaginary axis consists of a finite number of eigenvalues of finite multiplicity and is separated from the rest of $\sigma(\mathcal{L})$ in the sense of Kato, so that $\mathcal{X}$ admits decomposition $\mathcal{X}=\mathcal{X}_1 \oplus \mathcal{X}_2$, where $\mathcal{X}_1 =\mathcal{P}(\mathcal{X}),$ $\mathcal{X}_2=(\mathcal{I}-\mathcal{P})(\mathcal{X})$ and $\mathcal{P}$ is the spectral projection corresponding to the purely imaginary part of $\sigma(\mathcal{L})$.
    \\
    \item[]\label{H2} $(A2)$ The operator $\mathcal{L}_2=\mathcal{L}|_{\mathcal{X}_2}$ satisfies the resolvent estimate 
    \[
    \norm{(\mathcal{L}_2-i \xi \mathcal{I})^{-1}}_{\mathcal{X}_2 \to \mathcal{X}_2}\leq \frac{C}{1+|\xi|}, \qquad \xi \in \mathbb{R}
    \]
     for some constant $C$ independent of $\xi$.
     \\
     \item[]\label{H3} $(A3)$ There exists a natural number $k$ and neighborhoods $\Lambda \subset \mathbb{R}^l$ of $0$ and $U \subset D(\mathcal{L})$ of $0$ such that $\mathcal{N}$ is $(k+1)$ times  continuously differentiable on $U \times \Lambda$ and $\mathcal{N}(0,0)=0,\; d_1 \mathcal{N}[0,0]=0.$
\end{enumerate}
Under these hypotheses, there exist neighborhoods $\tilde{\Lambda}\subset \Lambda$ and $\tilde{U}_1 \subset U \cap \mathcal{X}_1, \; \tilde{U}_2 \subset U \cap \mathcal{X}_2 $ of $0$ and a reduction function $r:\tilde{U}_1 \times \tilde{\Lambda} \to \tilde{U}_2 $ with the following properties. The reduction function $r$ is $k$ times continuously differentiable on $\tilde{U}_1 \times \tilde{\Lambda}$, its derivatives are bounded and uniformly continuous on $\tilde{U}_1 \times \tilde{\Lambda}$ and $r(0,0)=0,\quad d_1r[0;0]=0$. The graph $\tilde{X}^\varepsilon=\{\tilde{u}_1+r(\tilde{u}_1;\varepsilon)\in \tilde{U}_1 \times \tilde{U}_2: \tilde{u}_1\in\tilde{U}_1\}$ is a Hamiltonian center manifold for \eqref{DE problem}, so that
\begin{itemize}
    \item $\tilde{X}^\varepsilon$ is a locally invariant manifold of \eqref{DE problem}: through every point in $\tilde{X}^\varepsilon$ there passes a unique solution of \eqref{DE problem} that remains on $\tilde{X}^\varepsilon$ as long as it remains in $\tilde{U}_1 \times \tilde{U}_2.$
    \\
    \item Every small bounded solution $u(x)$, $x\in \mathbb{R}$ of \eqref{DE problem} that satisfies $(\tilde{u}_1(x),\tilde{u}_2(x)) \in \tilde{U}_1 \times \tilde{U}_2$ lies completely in $\tilde{X}^\varepsilon$.\\
    \item Every solution $\tilde{u}_1:(x_1,x_2) \to \tilde{U}_1$ of the reduced equation 
    \begin{equation}\label{reduceeq}
        \tilde{u}_{1x}=\mathcal{L}\tilde{u}_1 +\mathcal{PN}(\tilde{u}_1+r(\tilde{u}_1;\varepsilon);\varepsilon)
    \end{equation}
    generates a solution
    \begin{equation}
        u(x)=\tilde{u}_1(x)+r(\tilde{u}_1(x);\varepsilon)
    \end{equation}
    of the full equation \eqref{DE problem}.
    \\
    \item $\tilde{X}^\varepsilon$ is a symplectic submanifold of $X$ and the flow determined by the Hamiltonian system $(\tilde{X}^\varepsilon, \tilde{\Omega}^\varepsilon, H^\varepsilon),$ where the tilde denotes restriction to $\tilde{X}^\varepsilon$, coincides with the flow on $\tilde{X}^\varepsilon$ determined by $(X,\Omega\tilde,H^\varepsilon)$. The reduced equation \eqref{reduceeq} is reversible and represents Hamilton equations for  $\tilde{X}^\varepsilon, \tilde{\Omega}^\varepsilon, H^\varepsilon).$
\end{itemize}
\end{theorem}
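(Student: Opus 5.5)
Theorem~\ref{CMT} is the abstract reduction result of \cite{Mielke1988}, and in the paper it will be invoked rather than reproved. If I were to prove it, I would follow the standard Lyapunov--Perron construction, adapted to a non-sectorial, Hamiltonian setting. First I would use hypothesis (A1) to split $\mathcal{X}=\mathcal{X}_1\oplus\mathcal{X}_2$ with the Riesz projection $\mathcal{P}$. Since $\mathcal{X}_1$ is finite-dimensional, $\mathcal{L}_1=\mathcal{L}|_{\mathcal{X}_1}$ generates a group $e^{\mathcal{L}_1x}$ with at most polynomial growth. Next I would multiply $\mathcal{N}$ by a smooth cutoff $\chi(\|\tilde u_1\|/\delta)$ in the center variables, so that the modified nonlinearity is globally $C^{k+1}$ with a small Lipschitz constant.

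The core analytic step, and the one I expect to be the main obstacle, is solving the hyperbolic part: for $f$ in an exponentially weighted space $C_\eta(\mathbb{R};\mathcal{X}_2)$ with small weight $\eta>0$, find a unique solution $u_2\in C_\eta(\mathbb{R};D(\mathcal{L}))$ of $u_{2x}=\mathcal{L}_2u_2+f$. Because $\mathcal{L}_2$ need not generate a semigroup in either direction, I would work with Fourier transforms in $x$ instead. The resolvent bound (A2), extended by perturbation to the strip $|\operatorname{Re}\lambda|\le\eta$, gives uniform bounds on $(\mathcal{L}_2-\lambda)^{-1}$. Combined with an $L^2_\eta\to H^1_\eta$ maximal-regularity argument in the manner of Mielke and Vanderbauwhede--Iooss, this yields a bounded solution operator $K_\eta$ whose norm is bounded uniformly in small $\eta$. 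With this in hand, the set of solutions of the cut-off problem in $C_\eta$ can be written as fixed points of
\[
u=e^{\mathcal{L}_1x}u_1^0+\int_0^x e^{\mathcal{L}_1(x-s)}\mathcal{P}\mathcal{N}(u(s);\varepsilon)\,ds+K_\eta(\mathcal{I}-\mathcal{P})\mathcal{N}(u;\varepsilon).
\]
The contraction mapping principle then gives a unique fixed point $u(\,\cdot\,;u_1^0,\varepsilon)$, and I would set $r(u_1^0;\varepsilon):=(\mathcal{I}-\mathcal{P})u(0;u_1^0,\varepsilon)$. For $C^k$ smoothness I would use the usual fibre-contraction argument over the scale of weights $\eta<2\eta<\dots<k\eta$; this is where $k$ derivatives are lost relative to the $C^{k+1}$ assumption (A3). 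From the construction one also reads off $r(0,0)=0$ and $d_1r[0;0]=0$.

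The listed properties would then follow in order. Local invariance comes from translation invariance of the fixed-point equation. The fact that every small bounded solution lies on the manifold follows because such a solution belongs to $C_\eta$ and agrees with the cut-off problem, so uniqueness of the fixed point applies. The statement that solutions of the reduced equation lift to solutions of the full equation follows by projecting with $\mathcal{P}$.

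For reversibility, I would note that $S$ commutes with $\mathcal{P}$ and anticommutes with the vector field. Hence $x\mapsto Su(-x)$ is again a fixed point, and uniqueness gives $r(S\tilde u_1;\varepsilon)=Sr(\tilde u_1;\varepsilon)$. For the Hamiltonian statement, I would show that $\mathcal{X}_1$ is symplectic, since the spectral subspaces for $\lambda$ and $-\bar\lambda$ pair under the symplectic form. Because $d_1r=0$ at the origin, the restricted form $\tilde\Omega^\varepsilon$ is then nondegenerate near $0$. Finally, since the manifold is invariant, restricting $dH^\varepsilon=\Omega^\varepsilon(u_x,\cdot)$ to tangent vectors shows that the reduced flow is the Hamiltonian flow of $(\tilde X^\varepsilon,\tilde\Omega^\varepsilon,\tilde H^\varepsilon)$.
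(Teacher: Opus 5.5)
The paper does not prove Theorem~\ref{CMT}. It quotes the result from \cite{Mielke1988} and only checks (A1)--(A2) for its own operator, so your outline has to stand on its own. It breaks at exactly the step you single out.

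\textbf{The linear step.} Hypothesis (A2) does \emph{not} give a bounded solution operator $K_\eta\colon C_\eta(\mathbb{R};\mathcal{X}_2)\to C_\eta(\mathbb{R};D(\mathcal{L}))$ for $u_{2x}=\mathcal{L}_2u_2+f$. Plancherel yields maximal regularity only in $L^2$-type spaces, $L^2_\eta(\mathbb{R};\mathcal{X}_2)\to H^1_\eta(\mathbb{R};\mathcal{X}_2)\cap L^2_\eta(\mathbb{R};D(\mathcal{L}))$, and this does not transfer to continuous functions. Here is a counterexample.
\begin{itemize}
\item Take $\mathcal{X}_2=\ell^2$ and $\mathcal{L}_2=\operatorname{diag}(-n)_{n\ge1}$. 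This satisfies (A2), since $\|(\mathcal{L}_2-i\xi)^{-1}\|=(1+\xi^2)^{-1/2}$.
\item Let $\vartheta\ge0$ be a continuous bump supported in $[-2,-1]$. Let $f$ have only the components $f_{2^k}(x)=k^{-1/2}\vartheta(2^kx)$, $k\ge1$.
\item These components have disjoint supports, so $f$ is continuous, compactly supported and $\ell^2$-valued.
\item Yet the unique solution in $C_\eta(\mathbb{R};\ell^2)$, $\eta<1$, satisfies
\[
2^k u_{2^k}(0)=k^{-1/2}\int_{-2}^{-1}e^{t}\vartheta(t)\,\dd t .
\]
This is not square-summable, so $u(0)\notin D(\mathcal{L}_2)$.
\end{itemize}
This is the usual failure of maximal regularity in spaces of continuous functions. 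Hence your fixed-point map does not send $C_\eta(\mathbb{R};D(\mathcal{L}))$ into itself.

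You cannot retreat to the $L^2$ spaces either. There $u(x)$ is not controlled pointwise in the neighbourhood $U\subset D(\mathcal{L})$ on which $\mathcal{N}$ is defined. Moreover, superposition operators between $L^2$-type spaces are not Fr\'echet differentiable unless affine, so the smoothness argument breaks as well.

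\textbf{A way to repair it.} In the Hilbert-space setting a workable fix is to trade one $x$-derivative.
\begin{itemize}
\item The solution operator commutes with $\partial_x$.
\item (A2), extended to a strip, gives $\sup_{|\operatorname{Re}\lambda|\le\eta}\|\mathcal{L}_2(\mathcal{L}_2-\lambda)^{-1}\|<\infty$.
\item Hence $K_\eta$ maps exponentially weighted, uniformly local $H^1$ functions with values in $\mathcal{X}_2$ into $H^1$ functions with values in $D(\mathcal{L})$. The latter are continuous into $D(\mathcal{L})$.
\item The map $u\mapsto\mathcal{N}(u)$ sends them back into the former class via $\partial_x\mathcal{N}(u)=d_1\mathcal{N}[u]u_x$, at the cost of one derivative of $\mathcal{N}$.
\end{itemize}
This is a natural source of the $C^{k+1}\Rightarrow C^k$ loss in the statement. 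The scale of weights $\eta<2\eta<\dots<k\eta$ you invoke does not cause it: in the $C_\eta$ framework of Vanderbauwhede--Iooss a $C^k$ nonlinearity already yields a $C^k$ manifold. The alternative is to assume $\mathcal{N}$ takes values in an intermediate space where $C_\eta$-maximal regularity holds, which (A3) does not provide.

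\textbf{The cutoff.} A second, smaller gap: a cutoff $\chi(\|\tilde u_1\|/\delta)$ in the center variable alone does not make the nonlinearity globally defined or globally small-Lipschitz. $\mathcal{N}$ is only given on $U\subset D(\mathcal{L})$, and a term quadratic in $u_2$ stays quadratic after such a cutoff. You must also localize in $u_2$, for example with $\chi(\|u\|_{D(\mathcal{L})}^2/\delta^2)$. This is smooth because $D(\mathcal{L})$ with the graph norm is a Hilbert space, and it should be chosen $S$-invariant for your reversibility argument.

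The reversibility and symplectic parts of your outline are otherwise fine.
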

\subsection{ Change of variables} Prior to applying the center manifold theorem to our problem, we still need to perform one more change of variable to linearize the second (nonlinear) boundary condition in \eqref{eq:bc-reference point}. We let 
\begin{equation}\label{eq:change of coord bc}
    \tau:=\frac{-a(1)\omega}{\sqrt{(\beta_0+\varepsilon_1)^2-\omega^2}}, \qquad \mathfrak{w}(\zeta):=(1+a(\zeta)\phi_\zeta(\zeta))w(\zeta).
\end{equation}
As a result, we obtain the following proposition.
\begin{proposition}\label{prop:flattened-system}
Let
\[
G^\eps(\rho,\omega,\phi,w)=(\rho,\tau,\phi,\mathfrak{w})
\]
be the change of variables in \eqref{eq:change of coord bc},
then the system \eqref{perturbed HE}
in the variables $(\rho,\omega,\phi,w)$ is transformed into the system
\begin{equation}\label{eq:flattened-rho}
\rho_x=-a(1)^{-1}\tau,
\end{equation}
\begin{equation}\label{eq:flattened-tau}
\tau_x=
-\frac{(a(1)^2+\tau^2)^{3/2}}{a(1)^2(\beta_0+\eps_1)}
\left[
\frac12\,\frac{\mathfrak{w}(1)^2+a(1)^2}{(1+a(1)\phi_\zeta(1))^2}
+(\alpha_0+\eps_2)\rho-\frac{\mu_*}{2}
\right],
\end{equation}
\begin{equation}\label{eq:flattened-phi}
\phi_x=-a(\zeta)^{-1}\mathfrak{w},
\end{equation}
\begin{equation}\label{eq:flattened-z}
\mathfrak{w}_x=
-\frac{a(\zeta)\mathfrak{w}\,\bigl(a(\zeta)^{-1}\mathfrak{w}\bigr)_\zeta}{1+a(\zeta)\phi_\zeta}
-(1+a(\zeta)\phi_\zeta)\left[
\frac12\left(
\frac{\mathfrak{w}^2+a(\zeta)^2}{(1+a(\zeta)\phi_\zeta)^2}
\right)_\zeta-\gamma(\zeta)
\right],
\end{equation}
with new boundary conditions
\begin{equation}\label{eq:flattened-bc}
\mathfrak{w}(0)=0,
\qquad
\mathfrak{w}(1)=\tau,
\qquad
\phi(0)=0,
\qquad
\phi(1)=\rho.
\end{equation}

Moreover, these equations represent Hamilton equations for the transformed Hamiltonian
system
\[
(X,\Phi^\eps,K^\eps),
\]
where
\[
K^\eps:=\mathcal{H}^\eps\circ (G^\eps)^{-1},
\qquad
\Phi^\eps:=\bigl((G^\eps)^{-1}\bigr)^*\bm{\mathfrak{A}}.
\]
The reverser is given by
\[
S(\rho,\tau,\phi,\mathfrak{w})=(\rho,-\tau,\phi,-\mathfrak{w}).
\]
\end{proposition}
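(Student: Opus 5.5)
The plan is to verify the proposition by direct computation: differentiate the new variables along solutions of \eqref{perturbed HE}--\eqref{eq:bc-reference point}, then appeal to the invariance of Hamiltonian structure under diffeomorphisms. Throughout, write $B:=\beta_0+\eps_1$ and $f(\omega):=\omega/\sqrt{B^2-\omega^2}$, so that $\tau=-a(1)f(\omega)$.

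\emph{Step 1: invertibility of $G^\eps$.} On the region $|\omega|<\beta_0/2$, the map $\omega\mapsto\tau$ is strictly monotone and its inverse is explicit:
\[
\omega=-\frac{B\tau}{\sqrt{a(1)^2+\tau^2}}.
\]
Under the no-stagnation constraint $1+a\phi_\zeta>0$, the inverse of the second transformation is $w=\mathfrak{w}/(1+a\phi_\zeta)$. Hence $G^\eps$ is a smooth diffeomorphism between neighbourhoods of the origin, with smooth dependence on $\eps$. The same formulas give two identities that will be used repeatedly:
\[
B^2-\omega^2=\frac{B^2a(1)^2}{a(1)^2+\tau^2},\qquad f'(\omega)=\frac{B^2}{(B^2-\omega^2)^{3/2}}=\frac{(a(1)^2+\tau^2)^{3/2}}{B\,a(1)^3}.
\]

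\emph{Step 2: the transformed equations.}
\begin{itemize}
\item[(a)] The equation for $\rho$: since $\rho_x=f(\omega)$, we get $\rho_x=-a(1)^{-1}\tau$, which is \eqref{eq:flattened-rho}.
\item[(b)] The equation for $\tau$: the chain rule gives $\tau_x=-a(1)f'(\omega)\,\omega_x$. Substituting the identity for $f'$ from Step 1 and $w(1)=\mathfrak{w}(1)/(1+a(1)\phi_\zeta(1))$ into the second equation of \eqref{perturbed HE} yields \eqref{eq:flattened-tau}.
\item[(c)] The equation for $\phi$: since $(a^{-1}+\phi_\zeta)w=a^{-1}\mathfrak{w}$, the third equation of \eqref{perturbed HE} becomes \eqref{eq:flattened-phi}.
\item[(d)] The equation for $\mathfrak{w}$: differentiating gives $\mathfrak{w}_x=(1+a\phi_\zeta)w_x+a\,\phi_{\zeta x}\,w$. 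Using \eqref{eq:flattened-phi}, $\phi_{\zeta x}=-(a^{-1}\mathfrak{w})_\zeta$. Inserting $w=\mathfrak{w}/(1+a\phi_\zeta)$ and the fourth equation of \eqref{perturbed HE} gives \eqref{eq:flattened-z}.
\end{itemize}

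\emph{Step 3: boundary conditions.} Since $w(0)=0$, we get $\mathfrak{w}(0)=0$. At the surface,
\[
\mathfrak{w}(1)=a(1)\bigl(a(1)^{-1}+\phi_\zeta(1)\bigr)w(1)=-a(1)f(\omega)=\tau
\]
by \eqref{eq:bc-reference point}. The conditions $\phi(0)=0$ and $\phi(1)=\rho$ are unchanged. Thus the nonlinear boundary constraint is flattened into the linear conditions \eqref{eq:flattened-bc}, which is the purpose of the change of variables.

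\emph{Step 4: Hamiltonian structure and reversibility.} I would use the general fact that if $u$ solves Hamilton's equations for $(X,\bm{\mathfrak{A}},\mathcal{H}^\eps)$, then $G^\eps(u)$ solves Hamilton's equations for the transformed system $\bigl(X,((G^\eps)^{-1})^*\bm{\mathfrak{A}},\mathcal{H}^\eps\circ(G^\eps)^{-1}\bigr)$. This holds because $dK^\eps=d\mathcal{H}^\eps\circ d(G^\eps)^{-1}$ and the symplectic form is transported by the same map. For reversibility, note that $\tau$ is odd in $\omega$, $\mathfrak{w}$ is odd in $w$, and $\rho,\phi$ are unchanged. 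Hence $G^\eps\circ S=S\circ G^\eps$ with the stated reverser, so reversibility is inherited.

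\emph{Main obstacle.} I expect the main obstacle to be the functional-analytic justification rather than the algebra. One must check three things:
\begin{itemize}
\item[(i)] $G^\eps$ is smooth between the relevant subsets of $X$ and $Y$. This holds because $\phi_\zeta\in H^1$ and $H^1(0,1)$ is an algebra.
\item[(ii)] $G^\eps$ maps the $\eps$-dependent domain defined by \eqref{eq:bc-reference point} onto the fixed linear domain defined by \eqref{eq:flattened-bc}.
\item[(iii)] $\Phi^\eps$ is a well-defined (possibly non-constant) symplectic form on $X$.
\end{itemize}
For these points I would follow the analogous argument in \cite{GrovesWahlen2007}, since the only new ingredient here is the term $\alpha\rho$. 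That term does not interact with the change of variables, because $\rho$ is left unchanged.
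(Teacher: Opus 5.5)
Your proposal is correct and follows essentially the same route as the paper. Both arguments compute $\tau_x$ by the chain rule, using $(\beta_0+\varepsilon_1)^2-\omega^2=(\beta_0+\varepsilon_1)^2a(1)^2/(a(1)^2+\tau^2)$ and $w=\mathfrak{w}/(1+a\phi_\zeta)$. They then appeal to the general fact that a diffeomorphism carries Hamilton's equations to those of the transported symplectic form and Hamiltonian, and reversibility follows because $\tau$ is odd in $\omega$ and $\mathfrak{w}$ is odd in $w$. You also supply the $\phi_x$, $\mathfrak{w}_x$ and boundary-condition computations that the paper omits. Your formula for $f'(\omega)$ is correct, whereas the paper's intermediate chain-rule line has a sign and exponent slip (its final prefactor agrees with yours).
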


\begin{proof}
To obtain equations \eqref{eq:flattened-rho}-\eqref{eq:flattened-bc}, one just needs to perform standard but tedious algebra. To get \eqref{eq:flattened-rho} and \eqref{eq:flattened-phi} is straightforward. We are going derive instead \eqref{eq:flattened-tau}. Observe that 
\begin{equation}\label{tq}
\begin{aligned}
  \tau_x&=\frac{-a(1)\omega_x}{((\beta_0+\varepsilon_1)^2-\omega^2)
}+\frac{a(1)\omega^2\omega_x}{((\beta_0+\varepsilon_1)^2-\omega^2)^\frac{3}{2}}\\
&=\left(-a(1)\frac{(\beta_0+\varepsilon_1)^2}{((\beta_0+\varepsilon_1)^2-\omega^2)^\frac{3}{2}}\right) \omega_x.
\end{aligned}
\end{equation}
Now, we need to check that  the factor next to $\omega_x$ is equivalent to the factor outside the square brackets in \eqref{eq:flattened-tau}. It is not hard to see that
\[
((\beta_0+\varepsilon_1)^2-\omega^2)=\frac{a^2(1)(\beta_0+\varepsilon_1)^2}{a^2(1)+\tau^2}.
\]
Thus, we get
\[
\left(-a(1)\frac{(\beta_0+\varepsilon_1)^2}{((\beta_0+\varepsilon_1)^2-\omega^2)^\frac{3}{2}}\right)=-\frac{(a^2(1)+\tau^2)^{\frac{3}{2}}}{a^2(1)(\beta_0+\varepsilon_1)}.
\]
The term $\omega_x$ in \eqref{tq} is almost similar to the expression of $\omega_x$ in \eqref{perturbed HE}, except that the terms inside the square bracket in the equation of $\omega_x$ in \eqref{perturbed HE} is now equivalent to 
\[
\frac{\mathfrak{w}(1)^2+a(1)^2}{(1+a(1)\phi_\zeta(1))^2},
\]
using the second coordinate transformation in \eqref{eq:change of coord bc}. This yields \eqref{eq:flattened-tau}.  To get to $\mathfrak{w}_x$, one needs to perform standard algebraic manipulations. Hence, we omit its derivation here.

Since $G^\eps$ is a local diffeomorphism, the transformed equations are Hamilton equations
for the pullback symplectic form and pullback Hamiltonian,
\[
\Phi^\eps:=\bigl((G^\eps)^{-1}\bigr)^*\bm{\mathfrak{A}},
\qquad
K^\eps:=\mathcal{H}^\eps\circ (G^\eps)^{-1}.
\]
Explicitly, we obtain
\begin{equation}\label{eq:symplectic form flattened}
\begin{aligned}
&\Phi^\eps_{(\rho,\tau,\phi,\mathfrak{w})}
\bigl((\rho_1,\tau_1,\phi_1,\mathfrak{w}_1),(\rho_2,\tau_2,\phi_2,\mathfrak{w}_2)\bigr)\\
&\qquad =
\int_0^1
\left[
\frac{\mathfrak{w}_2\phi_1-\mathfrak{w}_1\phi_2}{1+a(\zeta)\phi_\zeta}
-
\frac{a(\zeta)\mathfrak{w}}{(1+a(\zeta)\phi_\zeta)^2}
\bigl(\phi_{2\zeta}\phi_1-\phi_{1\zeta}\phi_2\bigr)
\right]\dd \zeta\\
&\qquad\quad
-\frac{(\beta_0+\eps_1)a(1)^2}{\bigl(a(1)^2+\tau^2\bigr)^{3/2}}
\bigl(\tau_2\rho_1-\tau_1\rho_2\bigr),
\end{aligned}
\end{equation}
and 
\begin{equation}\label{eq:Keps-fplane}
\begin{aligned}
K^\eps(\rho,\tau,\phi,\mathfrak{w})
&=
\int_0^1
\left[
\frac{a(\zeta)-a(\zeta)^{-1}\mathfrak{w}(\zeta)^2}{2\bigl(1+a(\zeta)\phi_\zeta(\zeta)\bigr)}
+\Gamma(\zeta)\phi_\zeta(\zeta)
-\frac{a(\zeta)}{2}
\right]\dd \zeta
-\frac{\alpha_0+\eps_2}{2}\rho^2
+\frac{\mu_*}{2}\rho
\\& \qquad -\frac{a(1)(\beta_0+\eps_1)}{\sqrt{a(1)^2+\tau^2}}
+(\beta_0+\eps_1).
\end{aligned}
\end{equation}
The reversibility follows directly from the definitions of $\tau$ and $\mathfrak{w}$.
\end{proof}
Note, the  domain of the vector field from the above Hamiltonian system is given by 
\[
\tilde{Y}=\{(\rho,\tau,\phi,\mathfrak{w}) \in\R\times\R\times H^2(0,1)\times H^1(0,1): \phi(0)=0,\ \phi(1)=\rho,\ \mathfrak{w}(0)=0,\ \mathfrak{w}(1)=\tau\}.
\]
After linearizing the Hamilton equations \eqref{eq:flattened-rho}-\eqref{eq:flattened-z} around the equilibrium   $u:=(\rho,\tau,\phi,\mathfrak{w})=(0,0,0,0)$, we recast the system in the form of a differential equation motivated by \eqref{DE problem}
\[
u_x=\mathcal{L}u+\mathcal{N}^\varepsilon(u),
\]
where the linear operator $\mathcal{L}:\tilde{Y}\rightarrow \tilde{X}$ is given by 

\begin{equation}\label{eq:L-flattened}
\mathcal{L}
\begin{pmatrix}
\rho\\[0.3ex]
\tau\\[0.3ex]
\phi\\[0.3ex]
\mathfrak{w}
\end{pmatrix}
=
\begin{pmatrix}
-\,a(1)^{-1}\tau\\[0.8ex]
a(1)^4\beta_0^{-1}\phi_\zeta(1)-a(1)\alpha_0\beta_0^{-1}\rho\\[0.8ex]
-\,a(\zeta)^{-1}\mathfrak{w}(\zeta)\\[0.8ex]
\bigl(a(\zeta)^3\phi_\zeta\bigr)_\zeta
\end{pmatrix},
\end{equation}
with domain
\begin{equation}\label{eq:D-L-flattened}
D(\mathcal{L})=
\Bigl\{
(\rho,\tau,\phi,\mathfrak{w}):
\phi\in H^2(0,1),\ \mathfrak{w}\in H^1(0,1),\ \phi(0)=0,\ \phi(1)=\rho,\ \mathfrak{w}(0)=0,\ \mathfrak{w}(1)=\tau
\Bigr\},
\end{equation}
and
\[
\tilde{X}:=\{(\rho,\tau,\phi,\mathfrak{w}) \in\R\times\R\times H^1(0,1)\times L^2(0,1): \phi(0)=0,\ \phi(1)=\rho
\}.
\]
Meanwhile, the term $\mathcal{N}$ contains all nonlinear and parameter-dependent higher-order
terms and satisfies
\[
    \mathcal{N}^0(0)=0,
    \qquad
    D_u\mathcal{N}^0(0)=0.
\]

Now, we verify that the hypotheses $(A1), (A2)$, and $(A3)$ in the Center Manifold theorem for the operator $\mathcal{L}$ above are satisfied. The third Hypothesis  $(A3)$ is straightforward, simply a regularity check. One can use tools such as Sobolev embedding to get up to the $C^{k+1}$ regularity. One can even prove $C^\infty$ regularity. We skip the proof for $(A3)$ here. However, we will provide the proof showing that $(A1)$ and $(A2)$ are satisfied in the present work. 

Consider the Eigenvalue problem 
$\mathcal{L}u=\kappa u$, where $\mathcal{L}$ is given as in \eqref{eq:L-flattened} and $u=(\rho, \tau, \phi,\mathfrak{w})$. We convert the eigenvalue problem for $\mathcal{L}$ to a scalar problem. This is precisely the content of the next proposition.

\begin{proposition}\label{prop:eigenvalue-reduction}
The eigenvalue problem
\[
\mathcal{L}u=\kappa u,
\qquad
u=(\rho,\tau,\phi,\mathfrak{w}) \in D(\mathcal{L}),
\]
is equivalent to the scalar boundary-value problem
\begin{equation}\label{eq:phi-eigenvalue-problem}
-\,a(\zeta)^{-1}\bigl(a(\zeta)^3\phi_\zeta\bigr)_\zeta=\kappa^2\phi,
\qquad 0<\zeta<1,
\end{equation}
with boundary conditions
\begin{equation}\label{eq:phi-eigenvalue-bc}
\phi(0)=0,
\qquad
-\frac{a(1)^3}{\beta_0}\phi_\zeta(1)+\frac{\alpha_0}{\beta_0}\phi(1)=\kappa^2\phi(1).
\end{equation}
\end{proposition}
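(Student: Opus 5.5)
The plan is to write out the eigenvalue equation $\mathcal{L}u=\kappa u$ component by component using \eqref{eq:L-flattened}, eliminate the momentum variables $\tau$ and $\mathfrak{w}$, and verify that the constraints in $D(\mathcal{L})$ become exactly the boundary conditions \eqref{eq:phi-eigenvalue-bc}. Concretely, the four components read
\[
-a(1)^{-1}\tau=\kappa\rho,\qquad a(1)^4\beta_0^{-1}\phi_\zeta(1)-a(1)\alpha_0\beta_0^{-1}\rho=\kappa\tau,
\]
\[
-a(\zeta)^{-1}\mathfrak{w}=\kappa\phi,\qquad \bigl(a^3\phi_\zeta\bigr)_\zeta=\kappa\mathfrak{w}.
\]

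For the forward direction, the third equation gives $\mathfrak{w}=-\kappa a\phi$. Substituting this into the fourth yields $(a^3\phi_\zeta)_\zeta=-\kappa^2 a\phi$, which is \eqref{eq:phi-eigenvalue-problem} after dividing by $a>0$. The first equation gives $\tau=-\kappa a(1)\rho$. Inserting this into the second, and using $\rho=\phi(1)$ from the domain, gives
\[
a(1)^4\beta_0^{-1}\phi_\zeta(1)-a(1)\alpha_0\beta_0^{-1}\phi(1)=-\kappa^2a(1)\phi(1).
\]
Dividing by $-a(1)$ produces the second condition in \eqref{eq:phi-eigenvalue-bc}, while $\phi(0)=0$ is inherited from $D(\mathcal{L})$. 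The remaining domain constraints $\mathfrak{w}(0)=0$ and $\mathfrak{w}(1)=\tau$ are then automatic: $\mathfrak{w}(0)=-\kappa a(0)\phi(0)=0$ and $\mathfrak{w}(1)=-\kappa a(1)\phi(1)=-\kappa a(1)\rho=\tau$. This compatibility is precisely the purpose of the change of variables \eqref{eq:change of coord bc}.

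For the converse, given a solution $\phi\in H^2$ of \eqref{eq:phi-eigenvalue-problem}--\eqref{eq:phi-eigenvalue-bc}, define $\rho:=\phi(1)$, $\tau:=-\kappa a(1)\phi(1)$ and $\mathfrak{w}:=-\kappa a\phi$. Then $\mathfrak{w}\in H^1$, since $a\in H^1$ when $\Gamma\in H^1$ and $a$ is bounded below. Reversing the computations above shows that $u\in D(\mathcal{L})$ and $\mathcal{L}u=\kappa u$. One should also note that $u\neq0$ if and only if $\phi\neq0$, so that nontrivial eigenvectors correspond to nontrivial eigenfunctions; if $\phi=0$ then $\rho=\tau=\mathfrak{w}=0$.

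I expect the main obstacle to be regularity and bookkeeping rather than algebra. Since $a$ is only $H^1$, the equation $(a^3\phi_\zeta)_\zeta=-\kappa^2a\phi$ should be read with $a^3\phi_\zeta\in H^1$. I will check that this is consistent with $\phi\in H^2$ via the product rule in one dimension, which works because $H^1$ is an algebra. The case $\kappa=0$ also needs separate care, since one cannot divide by $\kappa$ there. Both directions above, however, only multiply by $\kappa$, so the argument goes through, with $\tau=0$ and $\mathfrak{w}=0$ when $\kappa=0$.
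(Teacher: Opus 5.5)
Your proposal is correct and follows the paper's proof essentially step by step: you eliminate $\mathfrak{w}=-\kappa a\phi$ and $\tau=-\kappa a(1)\phi(1)$, substitute into the fourth and second components (with $\rho=\phi(1)$), and reverse the computation for the converse. The differences are minor. You obtain $\tau$ from the first component rather than from the constraint $\mathfrak{w}(1)=\tau$, which is equivalent once $\rho=\phi(1)$ and $\mathfrak{w}=-\kappa a\phi$ are known. You also spell out the regularity, nontriviality, and $\kappa=0$ checks that the paper leaves implicit.
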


\begin{proof}
Suppose that $\mathcal{L}u=\kappa u$. Writing out the components explicitly gives
\begin{align}
-\,a(1)^{-1}\tau &= \kappa \rho,\label{eq:eig1}\\
a(1)^4\beta_0^{-1}\phi_\zeta(1)-a(1)\alpha_0\beta_0^{-1}\rho&=\kappa \tau,\label{eq:eig2}\\
-\,a(\zeta)^{-1}\mathfrak{w}&=\kappa \phi,\label{eq:eig3}\\
\bigl(a(\zeta)^3\phi_\zeta\bigr)_\zeta&=\kappa \mathfrak{w},\label{eq:eig4}
\end{align}
together with the boundary conditions
\[
\phi(0)=0,\qquad \mathfrak{w}(0)=0,\qquad \mathfrak{w}(1)=\tau.
\]

From \eqref{eq:eig3} we obtain $\mathfrak{w}=-\kappa a(\zeta)\phi.$
Substituting this into \eqref{eq:eig4} yields
\[
\bigl(a(\zeta)^3\phi_\zeta\bigr)_\zeta=-\kappa^2 a(\zeta)\phi,
\]
which is equivalent to \eqref{eq:phi-eigenvalue-problem}. Since $\mathfrak{w}(1)=\tau$, we also have $
\tau=-\kappa a(1)\phi(1).$
Substituting this into \eqref{eq:eig2} and using $\rho=\phi(1)$ gives
\[
a(1)^4\beta_0^{-1}\phi_\zeta(1)-a(1)\alpha_0\beta_0^{-1}\phi(1)
=
-\kappa^2 a(1)\phi(1),
\]
and division by $a(1)$ yields \eqref{eq:phi-eigenvalue-bc}. The converse follows by reversing the calculation.
\end{proof}
\begin{proposition}\label{prop:spectrum-fplane}
The spectrum of the linearized operator
\[
\mathcal L : D(\mathcal L)\subset X \to X
\]
consists of isolated eigenvalues, each of which is geometrically simple and has finite
algebraic multiplicity.
\end{proposition}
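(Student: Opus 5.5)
We will show that $\mathcal L$ has compact resolvent: then its spectrum is discrete and consists of eigenvalues of finite algebraic multiplicity. Geometric simplicity will then come from the scalar reduction in Proposition~\ref{prop:eigenvalue-reduction}.

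\textbf{Step 1: solving the resolvent equation.} Fix $\kappa\in\C$ and $f=(f_1,f_2,f_3,f_4)\in\tilde X$, and consider $(\mathcal L-\kappa)u=f$ with $u=(\rho,\tau,\phi,\mathfrak w)\in D(\mathcal L)$. The third component gives
\[
\mathfrak w=-a(\zeta)(f_3+\kappa\phi),
\]
and the constraint $\mathfrak w(1)=\tau$ then determines $\tau$. Substituting into the fourth component gives
\[
(a^3\phi_\zeta)_\zeta+\kappa^2a\phi=-\kappa a f_3+f_4,
\]
with $\phi(0)=0$ and $\phi(1)=\rho$. The first two components become a boundary condition at $\zeta=1$ of the form
\[
-\tfrac{a(1)^3}{\beta_0}\phi_\zeta(1)+\tfrac{\alpha_0}{\beta_0}\phi(1)-\kappa^2\phi(1)=\text{(data)}.
\]
The data are linear in $f$ and involve $f_1$, $f_2$, $f_3(1)$.

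Since $\gamma\in L^2$, we have $\Gamma\in H^1\subset C[0,1]$, so $a\in H^1$ is continuous and bounded below by $a_{\min}>0$. Hence this is a regular Sturm--Liouville problem.

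\textbf{Step 2: compactness.} For $\kappa$ with $\kappa^2$ outside the eigenvalue set of \eqref{eq:phi-eigenvalue-problem}--\eqref{eq:phi-eigenvalue-bc}, the boundary-value problem is uniquely solvable by the Fredholm alternative. The solution satisfies $\phi\in H^2$ with
\[
\|\phi\|_{H^2}\le C\bigl(\|f_3\|_{H^1}+\|f_4\|_{L^2}+|f_1|+|f_2|\bigr).
\]
Then $\mathfrak w\in H^1$ and $\rho,\tau$ are bounded by the same quantity. Thus $(\mathcal L-\kappa)^{-1}$ maps $\tilde X$ boundedly into $H^2\times H^1\times\R^2$. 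By Rellich, this space embeds compactly into $H^1\times L^2\times\R^2$, so the resolvent is compact.

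Nonemptiness of the resolvent set requires that $\kappa^2$ avoid the eigenvalues. The problem \eqref{eq:phi-eigenvalue-problem}--\eqref{eq:phi-eigenvalue-bc} is self-adjoint on $L^2(a\,d\zeta)\oplus\C$ with inner product weight $\beta_0/a(1)^2$ in the Walter--Fulton framework for eigenvalue-dependent boundary conditions. Its eigenvalues $\kappa^2$ are therefore real, isolated, and accumulate only at $+\infty$. Any $\kappa$ with $\kappa^2\notin\R$ thus lies in the resolvent set. Compactness of the resolvent then gives isolated eigenvalues of finite algebraic multiplicity (Kato).

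\textbf{Step 3: geometric simplicity.} By Proposition~\ref{prop:eigenvalue-reduction}, the eigenvectors for $\kappa$ correspond bijectively to solutions $\phi$ of the scalar problem, via $\rho=\phi(1)$, $\mathfrak w=-\kappa a\phi$, $\tau=-\kappa a(1)\phi(1)$. Any solution of the second-order ODE with $\phi(0)=0$ is a multiple of the unique solution with $\phi(0)=0$, $a(0)^3\phi_\zeta(0)=1$. Hence the kernel of $\mathcal L-\kappa$ is at most one-dimensional.

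\textbf{Main obstacle.} The delicate point is the $H^2$ estimate when the coefficient $a^3$ is only $H^1$, so $(a^3\phi_\zeta)_\zeta\in L^2$ does not immediately give $\phi_{\zeta\zeta}\in L^2$. I would handle this by writing $\phi_\zeta=a^{-3}\int(\dots)$. This gives $\phi_\zeta\in H^1$, because $H^1$ is an algebra in one dimension and $a^{-3}\in H^1$. A secondary care point is treating the coupled boundary term in the Fredholm argument, which reduces to a two-dimensional linear algebra check on the fundamental solutions.
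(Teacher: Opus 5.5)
Your route is different from the paper's. The paper introduces the characteristic function $\mathcal D(\kappa)$ and reads off isolatedness and multiplicity from its zeros. It asserts without proof that $\mathcal D\not\equiv0$, that $\mathcal D(\kappa)\neq0$ gives a bounded inverse, and that algebraic multiplicity equals the order of the zero. Your Steps 1--2 plus Kato would actually supply these facts. However, your argument that $\rho(\mathcal L)\neq\emptyset$ is wrong. The scalar problem \eqref{eq:phi-eigenvalue-problem}--\eqref{eq:phi-eigenvalue-bc} is not self-adjoint in a Hilbert space with a positive boundary weight. Multiplying by $a\bar\phi$ and using $a(1)^3\phi_\zeta(1)=(\alpha_0-\beta_0\nu)\phi(1)$ gives
\[
\int_0^1 a^3|\phi_\zeta|^2\,\dd\zeta-\alpha_0|\phi(1)|^2=\nu\Bigl(\int_0^1 a|\phi|^2\,\dd\zeta-\beta_0|\phi(1)|^2\Bigr).
\]
So the symmetrizing form is $\int_0^1 a\phi\bar\psi\,\dd\zeta-\beta_0\phi(1)\overline{\psi(1)}$. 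The boundary weight has the opposite sign to yours, and the form is indefinite: a Pontryagin space with one negative square. Taking imaginary parts gives only $\Im\nu\,\bigl(\int_0^1 a|\phi|^2\,\dd\zeta-\beta_0|\phi(1)|^2\bigr)=0$, which does not force $\nu\in\R$. Non-real scalar eigenvalues and algebraically double real ones do occur. They are exactly what the paper uses later: the tangencies of $s=\widehat\alpha-\widehat\beta\nu$ with $s=B(\nu)$ (the $0^4$ point, $C_1$, $C_2$) and the complex quartet on the Hamiltonian--Hopf side. So ``$\kappa^2\notin\R\Rightarrow\kappa\in\rho(\mathcal L)$'' is false in general.

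The repair is easy, because you need only one point of the resolvent set. Take $\kappa^2=\nu^D$, a Dirichlet eigenvalue of $-(a^3\phi_\zeta)_\zeta=\nu a\phi$, $\phi(0)=\phi(1)=0$. These exist and are real, since that problem is self-adjoint in $L^2(a\,\dd\zeta)$. The solution with $\phi(0)=0$, $\phi_\zeta(0)=1$ then satisfies $\phi(1)=0$, so uniqueness for the initial-value problem gives $\phi_\zeta(1)\neq0$. The top boundary condition then reduces to $-a(1)^3\beta_0^{-1}\phi_\zeta(1)=0$, which fails. Hence $\nu^D$ is not a scalar eigenvalue, your Steps 1--2 give $\kappa\in\rho(\mathcal L)$ with compact resolvent, and the rest of your proof goes through (Kato for discreteness and finite algebraic multiplicity, Step 3 for geometric simplicity). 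The same observation also justifies the claim $\mathcal D\not\equiv0$, which the paper's proof leaves unproved.
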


\begin{proof}
Recall from proposition~\ref{prop:eigenvalue-reduction}, the eigenvalue problem for \(\mathcal L\) is equivalent to the scalar boundary-value
problem
\begin{equation}\label{eq:scalar-eig-problem-fplane}
-\,a(\zeta)^{-1}\bigl(a(\zeta)^3\phi_\zeta\bigr)_\zeta=\kappa^2\phi,
\qquad 0<\zeta<1,
\end{equation}
with boundary conditions
\begin{equation}\label{eq:scalar-eig-bc-fplane}
\phi(0)=0,
\qquad
-\frac{a(1)^3}{\beta_0}\phi_\zeta(1)+\frac{\alpha_0}{\beta_0}\phi(1)=\kappa^2\phi(1).
\end{equation}

We now show that the spectrum of \(\mathcal L\) is discrete. For each fixed
\(\kappa\in\mathbb C\), let \(\phi(\,\cdot\,;\kappa)\) denote the unique solution of the
initial-value problem
\[
-\,a(\zeta)^{-1}\bigl(a(\zeta)^3\phi_\zeta\bigr)_\zeta=\kappa^2\phi,
\qquad
\phi(0;\kappa)=0,
\qquad
\phi_\zeta(0;\kappa)=1.
\]
By the elementary theory of ordinary differential equations, \(\phi(\zeta;\kappa)\) depends
analytically on \(\kappa\). We define the characteristic function
\[
\mathcal{D}(\kappa):=
-\frac{a(1)^3}{\beta_0}\phi_\zeta(1;\kappa)
+\frac{\alpha_0}{\beta_0}\phi(1;\kappa)
-\kappa^2\phi(1;\kappa).
\]
Then
\[
\kappa\in \sigma(\mathcal L)
\qquad\Longleftrightarrow\qquad
\mathcal{D}(\kappa)=0.
\]
Since \(\mathcal{D}\) is analytic and not identically zero, its zeros are isolated. Hence, the spectrum
of \(\mathcal L\) consists only of isolated eigenvalues.

Next, we show geometric simplicity. The scalar equation
\eqref{eq:scalar-eig-problem-fplane} is a second-order ordinary differential equation.
Imposing the boundary condition \(\phi(0)=0\) reduces the space of solutions to a
one-dimensional family, and therefore, any nontrivial solution of
\eqref{eq:scalar-eig-problem-fplane}--\eqref{eq:scalar-eig-bc-fplane} is unique up to
scalar multiplication. Hence, the scalar eigenspace is one-dimensional. Since \(\rho\), \(\tau\),
and \(\mathfrak{w}\) are recovered from \(\phi\) by the formulas
\[
\rho=\phi(1),
\qquad
\tau=-\kappa a(1)\phi(1),
\qquad
\mathfrak{w}=-\kappa a(\zeta)\phi,
\]
the eigenspace of \(\mathcal L\) is also one-dimensional. Thus, every eigenvalue of
\(\mathcal L\) is geometrically simple.

Finally, the algebraic multiplicity of an eigenvalue \(\kappa\) is finite because it is given by
the order of \(\kappa\) as a zero of the analytic characteristic function \(\mathcal{D}\). Since isolated
zeros of analytic functions have finite order, every eigenvalue of \(\mathcal L\) has finite
algebraic multiplicity. We conclude that the spectrum of \(\mathcal L\) consists of isolated, geometrically simple
eigenvalues of finite algebraic multiplicity.
\end{proof}
Using this proposition, we can decompose $\tilde{X}$ into two subspaces $\tilde{X}_1$ and $\tilde{X}_2$. $\tilde{X}_1$ contains eigenfunctions whose eigenvalues are all pure imaginary. Whereas ${X}_2$ is the space such that the spectrum of $\mathcal{L}$ restricted to $\tilde{X}_2$, we call it $\mathcal{L}_2$, it contains no pure imaginary eigenvalues. This shows that $\mathcal{L}$ satisfies $(A1)$ in theorem~\ref{CMT}. Next, we show that the second hypothesis $(A2)$ in the center manifold reduction theorem  is also satisfied. To achieve this, we take advantage of the following lemma.
\begin{lemma}\label{lem:resolvent-fplane}
There exist constants \(C>0\) and \(\xi_0>0\) such that each solution
\[
u\in D(\mathcal L)
\]
of the resolvent equation
\[
(\mathcal L-i\xi I)u=f^\dagger,
\qquad
f^\dagger\in \tilde{X},
\qquad
|\xi|>\xi_0,
\]
satisfies the estimates
\[
\|u\|_{\tilde{Y}}\le C\|f^\dagger\|_{\tilde{X}},
\qquad
\|u\|_{\tilde{X}}\le \frac{C}{|\xi|}\|f^\dagger\|_{\tilde{X}}.
\]
\end{lemma}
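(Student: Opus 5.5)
We write out the resolvent system componentwise and reduce it to a single scalar elliptic problem with parameter $\xi$, in the spirit of Proposition~\ref{prop:eigenvalue-reduction}. Write $f^\dagger=(f_1,f_2,f_3,f_4)$ with $f_3\in H^1(0,1)$, $f_3(0)=0$, $f_3(1)=f_1$, and $f_4\in L^2(0,1)$. The equations $(\mathcal L-i\xi I)u=f^\dagger$ read
\begin{align}
-a(1)^{-1}\tau-i\xi\rho&=f_1,\\
a(1)^4\beta_0^{-1}\phi_\zeta(1)-a(1)\alpha_0\beta_0^{-1}\rho-i\xi\tau&=f_2,\\
-a^{-1}\mathfrak{w}-i\xi\phi&=f_3,\\
(a^3\phi_\zeta)_\zeta-i\xi\mathfrak{w}&=f_4.
\end{align}
From the third equation, $\mathfrak{w}=-a(i\xi\phi+f_3)$. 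Substituting into the fourth gives
\[
(a^3\phi_\zeta)_\zeta-\xi^2a\phi=f_4-i\xi af_3.
\]
The constraint $\mathfrak{w}(1)=\tau$, together with $\phi(1)=\rho$ and $f_3(1)=f_1$, makes the first equation automatic. The second equation then becomes the Robin-type condition
\[
a(1)^3\phi_\zeta(1)-\alpha_0\rho+\beta_0\xi^2\rho=\beta_0 a(1)^{-1}\bigl(f_2-i\xi a(1)f_1\bigr),
\]
with $\phi(0)=0$.

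The next step is an energy estimate. We multiply the scalar equation by $\bar\phi$ and integrate by parts, using the boundary term at $\zeta=1$. Since $a\ge a_{\min}>0$ (non-stagnation), this yields
\[
\int_0^1 a^3|\phi_\zeta|^2+\xi^2\int_0^1 a|\phi|^2+(\beta_0\xi^2-\alpha_0)|\rho|^2=-\int_0^1(f_4-i\xi af_3)\bar\phi+\beta_0a(1)^{-1}(f_2-i\xi a(1)f_1)\bar\rho .
\]
For $|\xi|>\xi_0$ with $\beta_0\xi_0^2>2|\alpha_0|$, the left side is coercive: it controls $\|\phi_\zeta\|^2+\xi^2\|\phi\|^2+\xi^2|\rho|^2$. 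The term $-i\xi\int af_3\bar\phi$ is not small. We handle it by integrating by parts, moving the derivative onto $f_3$ so that it becomes a pairing of $f_3$ with $\phi$. Alternatively, we bound it by $|\xi|\,\|f_3\|\,\|\phi\|$ and absorb it with Young's inequality against $\xi^2\|\phi\|^2$; this leaves $\|f_3\|^2$, which is harmless. The boundary term $\xi f_1\bar\rho$ is absorbed in the same way against $\xi^2|\rho|^2$.

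These bounds give $\|\phi_\zeta\|+|\xi|\|\phi\|+|\xi||\rho|\le C\|f^\dagger\|_{\tilde X}$. Then $\mathfrak{w}=-a(i\xi\phi+f_3)$ satisfies $\|\mathfrak{w}\|_{L^2}\le C\|f^\dagger\|$, and $\tau=\mathfrak{w}(1)$. Here we use the identity $\tau=-a(1)(i\xi\rho+f_1)$, so $|\tau|\le C\|f^\dagger\|$. This proves the $\tilde Y$-type bound at the level of $H^1\times L^2$. To get the full $\tilde Y$ norm ($\phi\in H^2$, $\mathfrak{w}\in H^1$), we read $\phi_{\zeta\zeta}$ off the equation and differentiate the formula for $\mathfrak w$. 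These quantities carry a factor of $\xi$, so the first estimate should be interpreted with the $\xi$-weighted norm that the lemma implicitly uses. Finally, the $\tilde X$ estimate with the $1/|\xi|$ gain follows from the standard interpolation trick: write $i\xi u=\mathcal Lu-f^\dagger$ and apply the graph-norm bound.

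The main obstacle is making the gain of $1/|\xi|$ for the $\mathfrak w$ and $\tau$ components rigorous. Because $\mathfrak w$ contains $f_3$ directly, its $L^2$ norm is not $O(|\xi|^{-1})$ from the energy identity alone. The identity $u=(i\xi)^{-1}(\mathcal Lu-f^\dagger)$ combined with the uniform $\tilde Y$ bound resolves this, and that bound is exactly why we need the coercive, $\xi$-uniform estimate above, including the $\xi$-dependent forcing terms.
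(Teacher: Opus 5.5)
Your proposal has two genuine gaps.

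\textbf{The capillary boundary term has the wrong sign, and with the correct sign your form is not coercive.} From $\tau=-a(1)(i\xi\rho+f_1)$ you get $-i\xi\tau=-\xi^2a(1)\rho+i\xi a(1)f_1$. The boundary condition is therefore
\[
a(1)^3\phi_\zeta(1)=(\alpha_0+\beta_0\xi^2)\rho+\beta_0a(1)^{-1}f_2-i\xi\beta_0f_1,
\]
which is also what Proposition~\ref{prop:eigenvalue-reduction} gives at $\kappa^2=-\xi^2$. Testing with $\bar\phi$ then yields
\[
\int_0^1a^3|\phi_\zeta|^2\dd\zeta+\xi^2\int_0^1a|\phi|^2\dd\zeta-(\beta_0\xi^2+\alpha_0)|\rho|^2=\bigl(\beta_0a(1)^{-1}f_2-i\xi\beta_0f_1\bigr)\bar\rho-\int_0^1(f_4-i\xi af_3)\bar\phi\dd\zeta .
\]
The left side is indefinite for every large $\xi$. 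For profiles concentrated in a layer of width $|\xi|^{-1}$ at $\zeta=1$, the integrals are of size $|\xi||\rho|^2$, while the boundary term is of size $\xi^2|\rho|^2$. So no choice of $\xi_0$ makes it coercive.

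The estimate survives by dominance, not by sign. Solve the boundary condition for $\rho$. Bound $|\phi_\zeta(1)|^2\lesssim\|\phi_\zeta\|(\|\phi_\zeta\|+\|\phi_{\zeta\zeta}\|)$, and control $\phi_{\zeta\zeta}$ through $(a^3\phi_\zeta)_\zeta=\xi^2a\phi+f_4-i\xi af_3$. This gives $(\beta_0\xi^2+\alpha_0)|\rho|^2\lesssim|\xi|^{-1}E+(\text{forcing terms})$, where $E$ is the interior energy. That term can be absorbed for large $|\xi|$, because the capillary $\xi^2$ beats the $O(|\xi|)$ growth of the Dirichlet-to-Neumann map.

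Be aware that the paper's own identity has the same sign problem. Computed from the $\tau$-equation, its boundary contribution $2\xi\Im\bigl(a(1)^2\tau\overline{\phi_\zeta(1)}\bigr)$ equals $-2\beta_0a(1)^{-2}\xi^2|\tau|^2$ plus lower-order terms, not $+$. That route must also be closed by this dominance mechanism, using the $\tau$-equation to show $|\xi||\tau|\lesssim|\phi_\zeta(1)|+\dots$.

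\textbf{Even after this repair your bound has no gain, and the last step is circular.} The forcings $-i\xi af_3$ and $-i\xi\beta_0f_1$ are of size $|\xi|\|f^\dagger\|_{\tilde X}$. So you only obtain $\|\phi_\zeta\|+|\xi|\|\phi\|+|\xi||\rho|\lesssim\|f^\dagger\|_{\tilde X}$, hence $\|\mathfrak{w}\|+|\tau|\lesssim\|f^\dagger\|_{\tilde X}$, i.e.\ $\|u\|_{\tilde X}\lesssim\|f^\dagger\|_{\tilde X}$. Since
\[
\|u\|_{\tilde Y}\lesssim\|\mathcal Lu\|_{\tilde X}+\|u\|_{\tilde X}\le\|f^\dagger\|_{\tilde X}+(1+|\xi|)\|u\|_{\tilde X},
\]
the uniform $\tilde Y$ bound is equivalent to the $|\xi|^{-1}$ gain. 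You cannot derive the gain from a $\tilde Y$ bound you have not proved, and reinterpreting the lemma with a $\xi$-weighted norm changes its statement.

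The missing step is to peel off the leading part of $\phi$: write $\phi=\tfrac{i}{\xi}f_3+\psi$. Then:
\begin{itemize}
\item $\mathfrak{w}=-i\xi a\psi$ and $\tau=-i\xi a(1)\psi(1)$;
\item the interior term $-i\xi af_3$ cancels;
\item because $f_3(1)=f_1$, the boundary term $-i\xi\beta_0f_1$ cancels against $\beta_0\xi^2\cdot\tfrac{i}{\xi}f_1$.
\end{itemize}
In weak form, $\psi$ solves the same problem with forcing $\int f_4\bar v+\tfrac{i}{\xi}\int a^3f_{3\zeta}\bar v_\zeta$ and boundary data of size $\|f^\dagger\|_{\tilde X}$. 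The dominance argument then gives $\|\psi_\zeta\|+|\xi|\|\psi\|\lesssim|\xi|^{-1}\|f^\dagger\|_{\tilde X}$ and $|\psi(1)|\lesssim\xi^{-2}\|f^\dagger\|_{\tilde X}$. This is exactly $\|u\|_{\tilde X}\lesssim|\xi|^{-1}\|f^\dagger\|_{\tilde X}$, and the $\tilde Y$ bound then follows from the equations.

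The paper builds this gain in from the start by working one derivative higher. It differentiates the third equation and pairs it with the fourth, so $f^\dagger$ enters only through $\phi^\dagger_\zeta$ and $\mathfrak{w}^\dagger$. Your multiplier $\bar\phi$ acts at the $L^2$ level, one derivative too low for that.
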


\begin{proof}
Let
\[
u=(\rho,\tau,\phi,\mathfrak{w}),
\qquad
f^\dagger=(\rho^\dagger,\tau^\dagger,\phi^\dagger,\mathfrak{w}^\dagger).
\]
Writing the resolvent equation
\[
(\mathcal L-i\xi I)u=f^\dagger
\]
componentwise, we obtain
\begin{align}
-a(1)^{-1}\tau-i\xi\rho&=\rho^\dagger,\label{eq:res1}\\
a(1)^4\beta_0^{-1}\phi_\zeta(1)-a(1)\alpha_0\beta_0^{-1}\rho-i\xi\tau&=\tau^\dagger,\label{eq:res2}\\
-a(\zeta)^{-1}\mathfrak{w}-i\xi\phi&=\phi^\dagger,\label{eq:res3}\\
\bigl(a(\zeta)^3\phi_\zeta\bigr)_\zeta-i\xi \mathfrak{w}&=\mathfrak{w}^\dagger,\label{eq:res4}
\end{align}
together with the boundary conditions
\[
\phi(0)=0,
\qquad
\mathfrak{w}(0)=0,
\qquad
\mathfrak{w}(1)=\tau.
\]

The argument follows the same energy method as in Groves--Wahl\'en \cite{GrovesWahlen2007}. We first differentiate
\eqref{eq:res3} with respect to \(\zeta\) and multiply by \(a(\zeta)^{3/2}\), obtaining
\[
-a(\zeta)^{3/2}\bigl(a(\zeta)^{-1}\mathfrak{w}\bigr)_\zeta-i\xi a(\zeta)^{3/2}\phi_\zeta
=
a(\zeta)^{3/2}\phi^\dagger_\zeta.
\]
Next, we multiply \eqref{eq:res4} by \(a(\zeta)^{-1/2}\), so that
\[
a(\zeta)^{-1/2}\bigl(a(\zeta)^3\phi_\zeta\bigr)_\zeta-i\xi a(\zeta)^{-1/2}\mathfrak{w}
=
a(\zeta)^{-1/2}\mathfrak{w}^\dagger.
\]
Squaring the moduli of these two equations and adding, we arrive at
\begin{align}
&a(\zeta)^3\bigl|\bigl(a(\zeta)^{-1}\mathfrak{w}\bigr)_\zeta\bigr|^2
+a(\zeta)^{-1}\bigl|\bigl(a(\zeta)^3\phi_\zeta\bigr)_\zeta\bigr|^2
+\xi^2\Bigl(a(\zeta)^3|\phi_\zeta|^2+a(\zeta)^{-1}|\mathfrak{w}|^2\Bigr)
\notag\\
&\qquad
+2\xi\,\Im\!\left(
\bigl(a(\zeta)^{-1}\mathfrak{w}\bigr)_\zeta\,a(\zeta)^3\overline{\phi_\zeta}
+
a(\zeta)^{-1}\overline{\mathfrak{w}}\,\bigl(a(\zeta)^3\phi_\zeta\bigr)_\zeta
\right)
\notag\\
&\qquad\qquad
=
a(\zeta)^3|\phi^\dagger_\zeta|^2+a(\zeta)^{-1}|\mathfrak{w}^\dagger|^2.
\label{eq:energy-pointwise}
\end{align}

Integrating \eqref{eq:energy-pointwise} over \(\zeta\in(0,1)\), the mixed term can be computed via
integration by parts:
\begin{align*}
&\int_0^1
\left(
\bigl(a^{-1}\mathfrak{w}\bigr)_\zeta\,a^3\overline{\phi_\zeta}
+
a^{-1}\overline{\mathfrak{w}}\,\bigl(a^3\phi_\zeta\bigr)_\zeta
\right)\,\dd \zeta
\\
&\qquad
=
\Bigl[a(\zeta)^2\mathfrak{w}\,\overline{\phi_\zeta}\Bigr]_{\zeta=0}^{\zeta=1}
=
a(1)^2\tau\,\overline{\phi_\zeta(1)},
\end{align*}
where we have used the fact that \(\mathfrak{w}(0)=0\). Hence, we get
\begin{align}
&\int_0^1
\left(
a^3\bigl|\bigl(a^{-1}\mathfrak{w}\bigr)_\zeta\bigr|^2
+
a^{-1}\bigl|\bigl(a^3\phi_\zeta\bigr)_\zeta\bigr|^2
\right)\,\dd \zeta
+
\xi^2
\int_0^1
\left(
a^3|\phi_\zeta|^2+a^{-1}|\mathfrak{w}|^2
\right)\,\dd \zeta
\notag\\
&\qquad
+
2\xi\,\Im\!\bigl(a(1)^2\tau\,\overline{\phi_\zeta(1)}\bigr)
=
\int_0^1
\left(
a^3|\phi^\dagger_\zeta|^2+a^{-1}|\mathfrak{w}^\dagger|^2
\right)\,\dd \zeta.
\label{eq:energy-integrated}
\end{align}

We next use the boundary equation \eqref{eq:res2}. Multiplying \eqref{eq:res2} by
\(\overline{\tau}\) and taking imaginary parts gives
\[
a(1)^4\beta_0^{-1}\Im\!\bigl(\phi_\zeta(1)\overline{\tau}\bigr)
-a(1)\alpha_0\beta_0^{-1}\Im\!\bigl(\rho\overline{\tau}\bigr)
-\xi|\tau|^2
=
\Im\!\bigl(\tau^\dagger\overline{\tau}\bigr).
\]
Equivalently,
\[
\Im\!\bigl(a(1)^2\tau\,\overline{\phi_\zeta(1)}\bigr)
=
\beta_0 a(1)^{-2}\xi|\tau|^2
+\alpha_0 a(1)^{-1}\Im\!\bigl(\rho\overline{\tau}\bigr)
+\beta_0 a(1)^{-2}\Im\!\bigl(\tau^\dagger\overline{\tau}\bigr).
\]
Inserting this identity into \eqref{eq:energy-integrated} yields
\begin{align}
&\int_0^1
\left(
a^3\bigl|\bigl(a^{-1}\mathfrak{w}\bigr)_\zeta\bigr|^2
+
a^{-1}\bigl|\bigl(a^3\phi_\zeta\bigr)_\zeta\bigr|^2
\right)\,\dd \zeta
+
\xi^2
\int_0^1
\left(
a^3|\phi_\zeta|^2+a^{-1}|\mathfrak{w}|^2
\right)\,\dd \zeta
\notag\\
&\qquad
+
2\beta_0 a(1)^{-2}\xi^2|\tau|^2
+
2\alpha_0 a(1)^{-1}\xi\,\Im\!\bigl(\rho\overline{\tau}\bigr)
\notag\\
&\le
C\|f^\dagger\|_{\tilde{X}}^2
+
C|\xi|\,|\tau|\,\|f^\dagger\|_{\tilde{X}},
\label{eq:energy-before-absorb}
\end{align}
for some constant \(C>0\).

It remains to control the mixed term involving \(\rho\). From \eqref{eq:res1}, we have
\[
-i\xi\rho=\rho^\dagger+a(1)^{-1}\tau,
\]
and therefore
\[
|\xi|\,|\rho|
\le
C\bigl(\|f^\dagger\|_{\tilde{X}}+|\tau|\bigr).
\]
Using this estimate in \eqref{eq:energy-before-absorb} and applying Young's inequality, we
find that for \(|\xi|\) sufficiently large, say \(|\xi|>\xi_0\), the mixed term can be absorbed
into the positive \(\xi^2|\tau|^2\)-contribution. Hence, there exists \(C>0\) such that
\begin{align}
&\int_0^1
\left(
a^3\bigl|\bigl(a^{-1}\mathfrak{w}\bigr)_\zeta\bigr|^2
+
a^{-1}\bigl|\bigl(a^3\phi_\zeta\bigr)_\zeta\bigr|^2
\right)\,\dd \zeta
\notag\\
&\qquad
+
\xi^2
\left(
|\tau|^2+\|\phi_\zeta\|_{L^2(0,1)}^2+\|\mathfrak{w}\|_{L^2(0,1)}^2
\right)
\le
C\|f^\dagger\|_{\tilde{X}}^2.
\label{eq:main-coercive}
\end{align}
We now derive the claimed \(\tilde{Y}\)- and \(\tilde{X}\)-norm bounds.

First, equation \eqref{eq:main-coercive} immediately gives
\[
|\tau|+\|\phi_\zeta\|_{L^2(0,1)}+\|\mathfrak{w}\|_{L^2(0,1)}
\le
\frac{C}{|\xi|}\|f^\dagger\|_{\tilde{X}}.
\]
Since \(\phi(0)=0\), Poincar\'e's inequality allows us to infer
\[
\|\phi\|_{L^2(0,1)}
\le
C\|\phi_\zeta\|_{L^2(0,1)}
\le
\frac{C}{|\xi|}\|f^\dagger\|_{\tilde{X}}.
\]
Similarly, equation \eqref{eq:res1} gives
\[
|\rho|
\le
\frac{C}{|\xi|}\|f^\dagger\|_{\tilde{X}}.
\]
Therefore, we obtain
\[
\|u\|_{\tilde{X}}\le \frac{C}{|\xi|}\|f^\dagger\|_{\tilde{X}}.
\]

In order to estimate the stronger \(\tilde{Y}\)-norm, we use \eqref{eq:res4} and obtain $(a^3\phi_\zeta)_\zeta=\mathfrak{w}^\dagger+i\xi\mathfrak{w}.$
Because \(\|\mathfrak{w}\|_{L^2}\le C|\xi|^{-1}\|f^\dagger\|_{\tilde{X}}\), this implies
\[
\|(a^3\phi_\zeta)_\zeta\|_{L^2(0,1)}
\le
C\|f^\dagger\|_{\tilde{X}}.
\]
Since \(a\) is smooth and strictly positive, standard elliptic regularity for this one-dimensional
boundary-value problem yields
\[
\|\phi\|_{H^2(0,1)}\le C\|f^\dagger\|_{\tilde{X}}.
\]
Similarly, from \eqref{eq:res3}, one has $-a^{-1}\mathfrak{w}=\phi^\dagger+i\xi\phi,$ and differentiating gives $-(a^{-1}\mathfrak{w})_\zeta=\phi^\dagger_\zeta+i\xi\phi_\zeta.$
Using the previously established bounds on \(\phi\) and \(\phi_\zeta\), we conclude that
\[
\|\mathfrak{w}\|_{H^1(0,1)}\le C\|f^\dagger\|_{\tilde{X}}.
\]
Combining these estimates with the bound for \(\rho\) and \(\tau\), we obtain
\[
\|u\|_{\tilde{Y}}\le C\|f^\dagger\|_{\tilde{X}}.
\]
This completes the proof.
\end{proof}

Observe that via $(A1)$ for $|\xi|\leq\xi_0$, $i\xi \notin \sigma(\mathcal{L}_2)$. This implies that $(\mathcal{L}_2-i\xi\mathcal{I})^{-1}$ exists for all $\xi$. Moreover, since the set $\{\xi:\xi\leq\xi_0\}$ is bounded, therefore,
\[
\norm{(\mathcal{L}_2-i\xi\mathcal{I})^{-1}}\leq C\leq C\frac{1+\xi_0}{1+\xi}\leq \frac{C_1}{1+\xi}.
\]
Via Lemma~\ref{lem:resolvent-fplane}, taking $\xi_0\geq 1$,
we can easily see that 
\[
\norm{(\mathcal{L}_2-i\xi\mathcal{I})^{-1}}\leq \frac{2C}{2|\xi|}\leq \frac{C_2}{1+|\xi|}.
\]
Thus, both cases $|\xi|\leq\xi_0$ and $|\xi|>\xi_0$ combined shows that $\mathcal{L}_2$ satisfies $(A2)$.

\subsection{Eigenvalues of the linearized problem}

For what is to follow, we write
\[
\nu=\kappa^2.
\] Our goal here is to recast the eigenvalue problem $\mathcal{L}u=\kappa u$ as a Sturm--Liouville problem. To that end, we introduce the Liouville transformation
\begin{equation}\label{eq:liouville-transform}
y=\int_0^\zeta a(s)^{-1}\,\dd s,
\qquad
v(y)=a(\zeta)\phi(\zeta).
\end{equation}
\begin{proposition}\label{prop:SL-reduction}
Under the change of variables \eqref{eq:liouville-transform}, the eigenvalue problem
for $\mathcal{L}$ is transformed into the Sturm--Liouville problem
\begin{equation}\label{eq:SL-problem}
\begin{aligned}
-&\,v_{yy}+\mathfrak{Q}(y)v=\nu v,
\qquad 0<y<1,\\&
v(0)=0,
\qquad
\frac{v_y(1)}{v(1)}=\widehat{\alpha}-\widehat{\beta}\nu.
\end{aligned}
\end{equation}
Here, $\mathfrak{Q}$ is given by
\begin{equation}\label{eq:Q-def}
\mathfrak{Q}=(aa_\zeta)_\zeta=\gamma'(\zeta(y)),
\end{equation} while $\widehat{\alpha}$ and $\widehat{\beta}$
are given by \eqref{eq:hat-params}.
\end{proposition}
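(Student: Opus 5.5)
The plan is a direct computation: rewrite the scalar problem of Proposition~\ref{prop:eigenvalue-reduction} (with $\kappa^2=\nu$) in the new variables \eqref{eq:liouville-transform}. Since $a>0$ and $y'(\zeta)=a(\zeta)^{-1}>0$, the map $\zeta\mapsto y$ is a bijection of $[0,1]$ onto $[0,y(1)]$. Moreover
\[
y(1)=\int_0^1 a(s)^{-1}\dd s=H(1)=1
\]
in the nondimensional variables, because the laminar flow has unit depth. So the interval is again $(0,1)$. Throughout, I use the chain rule $\partial_\zeta=a^{-1}\partial_y$ and the substitution $\phi=v/a$.

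\emph{Step 1 (bulk equation).} First I compute
\[
a^3\phi_\zeta=a^2\phi_y=a^2(v/a)_y=a v_y-a_yv .
\]
Differentiating once more gives $(a^3\phi_\zeta)_\zeta=a^{-1}(a v_{yy}-a_{yy}v)$. Hence
\[
-a^{-1}(a^3\phi_\zeta)_\zeta=-a^{-1}v_{yy}+a^{-2}a_{yy}v .
\]
Setting this equal to $\nu\phi=\nu v/a$ and multiplying by $a$ yields $-v_{yy}+\mathfrak{Q}v=\nu v$, with $\mathfrak{Q}=a_{yy}/a$. Next, $a_y=a a_\zeta$ and $a_{yy}=a(aa_\zeta)_\zeta$, so $\mathfrak{Q}=(aa_\zeta)_\zeta$. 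Finally, \eqref{eq:a-def} gives $a^2=\lambda+2\Gamma$, so $aa_\zeta=\Gamma'=\gamma$ and $\mathfrak{Q}=\gamma'(\zeta(y))$, which is \eqref{eq:Q-def}.

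\emph{Step 2 (boundary conditions).} Since $a(0)>0$, the condition $\phi(0)=0$ is equivalent to $v(0)=0$. At $\zeta=1$, I substitute $a^3\phi_\zeta(1)=a(1)v_y(1)-a_y(1)v(1)$ and $\phi(1)=v(1)/a(1)$ into \eqref{eq:phi-eigenvalue-bc} and multiply by $\beta_0a(1)$. This gives
\[
-a(1)^2v_y(1)+a(1)a_y(1)v(1)+\alpha_0v(1)=\beta_0\nu v(1).
\]
Using $a_y/a=a_\zeta=\gamma/a$, division by $a(1)^2v(1)$ gives $v_y(1)/v(1)=\widehat\alpha-\widehat\beta\nu$ with
\[
\widehat\alpha=\frac{\alpha_0+a(1)\gamma(1)}{a(1)^2},\qquad \widehat\beta=\frac{\beta_0}{a(1)^2}.
\]
I expect these to be the constants in \eqref{eq:hat-params}. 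All steps are reversible, which gives the claimed equivalence of the two eigenvalue problems.

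\emph{Main obstacle.} The only delicate points are rigor rather than algebra.

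\textbf{Quotient form of the Robin condition.} The condition is written as a quotient, so it must be read in the multiplied form $v_y(1)=(\widehat\alpha-\widehat\beta\nu)v(1)$. In that form it also covers $v(1)=0$; if $v(1)=0$, the multiplied condition forces $v_y(1)=0$, so $v\equiv0$.

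\textbf{Regularity.} With $\gamma\in H^1$, we have $a\in H^2\subset C^1$ and $\mathfrak{Q}=\gamma'\in L^2$. The product and chain rules above are then valid in the Sobolev sense for $\phi\in H^2$. The transformed equation is understood for $v\in H^2(0,1)$ with an $L^2$ potential.
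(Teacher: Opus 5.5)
Your computation is correct and is essentially the paper's proof. It uses the same Liouville substitution and the same identity $a^3\phi_\zeta=a(v_y-a_\zeta v)$ (your $av_y-a_yv$, since $a_y=aa_\zeta$), and it handles the boundary condition at $\zeta=1$ the same way. Your $\widehat\alpha=(\alpha_0+a(1)\gamma(1))/a(1)^2$ coincides with the paper's $a_\zeta(1)+a(1)^{-2}\alpha_0$ because $aa_\zeta=\gamma$. Your two extra remarks are points the paper leaves implicit: reading the quotient condition in multiplied form, which forces $v(1)\neq0$ for eigenfunctions, and the regularity check for $\gamma\in H^1$.
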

\begin{proof}
Recall that in terms of $\nu$, equations \eqref{eq:phi-eigenvalue-problem}--\eqref{eq:phi-eigenvalue-bc} become
\begin{equation}\label{eq:phi-nu-problem}
-\,a(\zeta)^{-1}\bigl(a(\zeta)^3\phi_\zeta\bigr)_\zeta=\nu\phi,
\qquad 0<\zeta<1,
\end{equation}
\begin{equation}\label{eq:phi-nu-bc}
\phi(0)=0,
\qquad
-\frac{a(1)^3}{\beta_0}\phi_\zeta(1)+\frac{\alpha_0}{\beta_0}\phi(1)=\nu\phi(1).
\end{equation}
Thus, the spectral analysis of $\mathcal{L}$ reduces to the study of the scalar problem
\eqref{eq:phi-nu-problem}--\eqref{eq:phi-nu-bc}. 
Since
\[
\int_0^1 a(s)^{-1}\,\dd s=H(1)=1,
\]
the new variable $y$ in \eqref{eq:liouville-transform} ranges over the interval $(0,1)$. A direct calculation shows that
\[
\phi=\frac{v}{a},
\qquad
a^3\phi_\zeta=a\bigl(v_y-a_\zeta v\bigr),
\]
and hence, $
\bigl(a^3\phi_\zeta\bigr)_\zeta=v_{yy}-(aa_\zeta)_\zeta\,v.$
Therefore, equation \eqref{eq:phi-nu-problem} becomes
\[
-\,v_{yy}+\mathfrak{Q} v=\nu v,
\qquad 0<y<1,
\]
where
$\mathfrak{Q}$ is given by \eqref{eq:Q-def}.
Further, the boundary condition at $\zeta=0$ becomes $v(0)=0.$
At $\zeta=1$, using
\[
a(1)^3\phi_\zeta(1)=a(1)\bigl(v_y(1)-a_\zeta(1)v(1)\bigr),
\]
the boundary condition \eqref{eq:phi-nu-bc} reads
\[
a(1)\bigl(v_y(1)-a_\zeta(1)v(1)\bigr)
=
\bigl(\alpha_0-\beta_0\nu\bigr)\frac{v(1)}{a(1)}.
\]
Equivalently, this can be expressed as
\begin{equation}\label{eq:SL-bc}
\frac{v_y(1)}{v(1)}=\widehat{\alpha}-\widehat{\beta}\nu,
\end{equation}
where
\begin{equation}\label{eq:hat-params}
\widehat{\alpha}:=a_\zeta(1)+a(1)^{-2}\alpha_0,
\qquad
\widehat{\beta}:=a(1)^{-2}\beta_0.
\end{equation}
The proof is then complete. 
\end{proof}

The  Sturm--Liouville problem \eqref{eq:SL-problem} is an
equation with an eigenvalue-dependent affine boundary condition at $y=1$. The problem is known to have a countable number of geometrically simple eigenvalues $\nu_n$ for $n=0,1,2,...$ where Re$\nu_n<$Re$\nu_{n+1}$. They occur in complex-conjugate pairs and have the following representation
\[
\nu_n=n^2\pi^2+\int_{0}^1 \mathfrak{Q}(y)\;\dd y-\frac{2\widehat{\alpha}}{\widehat{\beta}}+O\left(\frac{1}{n}\right).
\]
The real eigenvalues $\nu$ of \eqref{eq:SL-problem}  may be characterized geometrically as the intersections of the line
\[
s=\widehat{\alpha}-\widehat{\beta}\nu
\]
with the graph
\[
B(\nu):=\frac{v_y(1;\nu)}{v(1;\nu)},
\]
where $v(\cdot;\nu)$ denotes the solution of the initial-value problem
\[
-\,v_{yy}+\mathfrak{Q}(y)v=\nu v,
\qquad
v(0;\nu)=0.
\]
A tangential intersection between the line $s$ and graph $B$ corresponds to an eigenvalue of algebraic multiplicity two.

Let $\nu_n^D$ denote the Dirichlet eigenvalues of the self-adjoint problem
\[
-\,v_{yy}+\mathfrak{Q}(y)v=\nu v,
\qquad
v(0)=v(1)=0,
\]
where $\nu_n^D$ obeys the following asymptotic representation
\[
\nu_n^D=(n+1)^2\pi^2+\int_0^1 \mathfrak{Q}(y)\;\dd y+O\left(\frac{1}{n}\right).
\]
Then, $B(\nu)$ has poles precisely at the values $\nu_n^D$ and is strictly decreasing
from $+\infty$ to $-\infty$ on each interval
\[
(-\infty,\nu_0^D),
\qquad
(\nu_{n-1}^D,\nu_n^D),\quad n\in\N.
\]
It follows that the boundary-value problem
\eqref{eq:SL-problem}
has at least one real eigenvalue in each interval
\[
(\nu_{n-1}^D,\nu_n^D),\qquad n\in\N.
\] 
Additionally, $\nu_{n-1}^D<\text{Re} \nu_{n+1}<\nu_n^D$ for $n$ large enough.
Consequently, the operator $\mathcal{L}$ has at least one pair of real eigenvalues in each interval
\[
\bigl((\nu_{n-1}^D)^{1/2},(\nu_n^D)^{1/2}\bigr)
\quad \text{and} \quad
\bigl(-(\nu_n^D)^{1/2},-(\nu_{n-1}^D)^{1/2}\bigr),
\qquad n\in\N.
\]

The distinguished low-frequency spectrum of $\mathcal{L}$ is therefore governed by the behavior
of the first few eigenvalues of \eqref{eq:SL-problem},
and the corresponding bifurcation curves in parameter space in  Figure~\ref{Bifurcation curve}  arise when the line
\[
s=\widehat{\alpha}-\widehat{\beta}\nu
\]
has a multiple intersection with the graph $s=B(\nu)$, or when one of the lowest
eigenvalues $\nu$ crosses zero. In particular, since $\nu=\kappa^2$, the sign of $\nu$
determines the type of spatial eigenvalue:
\begin{itemize}
    \item if $\nu>0$, then $\kappa=\pm\sqrt{\nu}$ are real spatial eigenvalues;
    \item if $\nu<0$, then $\kappa=\pm i\sqrt{|\nu|}$ are purely imaginary spatial eigenvalues.
\end{itemize}
Thus, the appearance of small-amplitude solitary waves is controlled by the sign changes
and multiplicity changes of the lowest eigenvalues of the Sturm--Liouville problem
\eqref{eq:SL-problem}.

Observe that a zero eigenvalue of \eqref{eq:SL-problem} gives rise to a zero eigenvalue of the operator $\mathcal{L}.$ The algebraic multiplicity of this eigenvalue can be determined by investigating $\mathcal{L}u=0.$ Setting $\nu=0$ in \eqref{eq:phi-nu-problem} and \eqref{eq:phi-nu-bc} yields
\begin{equation}\label{eq:zero-ode}
\bigl(a(\zeta)^3\phi_\zeta\bigr)_\zeta=0,
\qquad
\phi(0)=0,
\qquad
-\frac{a(1)^3}{\beta_0}\phi_\zeta(1)+\frac{\alpha_0}{\beta_0}\phi(1)=0.
\end{equation}
This leads to 
\[
\phi(\zeta)=C\int_0^\zeta a(s)^{-3}\,\dd s=:CI(\zeta).
\]
Then
\[
\phi(1)=CI(1),
\qquad
\phi_\zeta(1)=Ca(1)^{-3}.
\]
Inserting these two facts into the boundary condition in \eqref{eq:zero-ode} gives
\[
-\frac{a(1)^3}{\beta_0}\cdot Ca(1)^{-3}+\frac{\alpha_0}{\beta_0}\cdot CI(1)=0,
\]
which is equivalent to saying
\[
-C+\alpha_0 I(1)\,C=0.
\]
Hence, a nontrivial solution exists if and only if
\begin{equation}\label{eq:alpha-star-fplane}
\alpha_0=\alpha_*:=I(1)^{-1}
=
\left(\int_0^1 a(s)^{-3}\,\dd s\right)^{-1}.
\end{equation}
Next, we show that when $\alpha_0=\alpha_\ast$, we are guaranteed that $0$ is an eigenvalue of $\mathcal{L}$ and has algebraic multiplicity 2. Additionally, as shown below, if $\beta_0$ is set to equal some number $\beta_\ast$, then $0$ becomes an eigenvalue of $\mathcal{L}$ with algebraic multiplicity 4.
\begin{figure}[H]
\centering
\begin{tikzpicture}[
    x=3.0cm,
    y=3.0cm,
    >=Latex,
    every node/.style={font=\small},
    callout/.style={rounded corners=2pt, inner sep=4pt, line width=0.5pt},
]

% ---------------------------------------------------------------------
% Schematic coordinate window
% ---------------------------------------------------------------------
\def\xmin{-1.38}
\def\xmax{3.45}
\def\ymin{-0.093}
\def\ymax{2.34}
\def\alphastar{1.00}
\def\betastar{1.00}

% % Background regions.
% \fill[blue!7]    (\xmin,\alphastar) rectangle (\xmax,\ymax);
% \fill[yellow!13] (\xmin,\ymin)      rectangle (\xmax,\alphastar);
% \draw[gray!25,line width=0.6pt] (\xmin,\ymin) rectangle (\xmax,\ymax);

% Reference lines.
\draw[gray!65,dash pattern=on 2pt off 4pt,line width=0.65pt]
    (\xmin,\alphastar) -- (\xmax,\alphastar);
\draw[gray!55,dash pattern=on 1.5pt off 4pt,line width=0.55pt]
    (\betastar,\ymin) -- (\betastar,\alphastar);
% \draw[graytext,dash pattern=on 6pt off 3pt on 1pt off 3pt,line width=0.6pt]
%     (\xmin,0) -- (\xmax,0);

% Axes.
\draw[axiscolor,->,line width=0.8pt]
    ({\xmin+0.03},{\ymin+0.06}) -- ({\xmax-0.05},{\ymin+0.06});
\draw[axiscolor,->,line width=0.8pt]
    ({\xmin+0.03},{\ymin+0.06}) -- ({\xmin+0.03},{\ymax-0.06});

\node[axiscolor,anchor=west,font=\large]
    at ({\xmax-0.02},{\ymin+0.08}) {$\beta$};
\node[axiscolor,anchor=south west,font=\large]
    at ({\xmin+0.01},{\ymax-0.08}) {$\alpha$};

% ---------------------------------------------------------------------
% Bifurcation curves
% ---------------------------------------------------------------------

% C_2: Hamiltonian--Hopf branch, shifted 2 units right.
\begin{scope}[shift={(1,0)}]
\draw[tealcurve,line width=1.45pt]
    plot[domain=-0.82:0,samples=120,smooth]
    (\x,{1 + 1.05*(-\x/0.82)^2 + 0.12*(-\x/0.82)^3});
\end{scope}

% C_1: real 1:1 branch.
\begin{scope}[shift={(1,0)}]
\draw[redcurve,line width=1.35pt,dash pattern=on 8pt off 5pt]
    plot[domain=0:1.02,samples=120,smooth]
    (\x,{1 + 0.15*(\x)/1.02)
          + 0.40*(\x/1.02)^2
          + 0.62*(\x/1.02)^3});
\end{scope}

% C_4 and C_3: zero-eigenvalue branches.
\draw[axiscolor,line width=1.35pt] (1,\alphastar) -- (3.3,\alphastar);
%\draw[gray!65,line width=0.8pt,dash pattern=on 2pt off 5pt]
    %({\xmin+1},\alphastar) -- (1,\alphastar);

% Codimension-two point.
\draw[axiscolor,line width=0.9pt,fill=white]
    (\betastar,\alphastar) circle[radius=0.040];
\fill[axiscolor] (\betastar,\alphastar) circle[radius=0.014];
\draw[gray!25,line width=1.1pt]
    (\betastar,\alphastar) circle[radius=0.057];

% Curve labels.
\node[fill=white,inner sep=1pt,text=tealcurve,font=\large]
    at (0.2,1.70) {$C_2$};
\node[fill=white,inner sep=1pt,text=redcurve,font=\large]
    at (1.6,1.55) {$C_1$};
\node[fill=white,inner sep=1pt,text=axiscolor,font=\large]
    at (1.9,1.1) {$C_4$};
\node[fill=white,inner sep=1pt,text=gray!70,font=\normalsize]
    at (-1.20,1.06) {$C_3$};

% Axis and reference labels.
\node[anchor=east,font=\scriptsize]
    at ({\xmin-0.03},{\alphastar-0.02}) {$\alpha_*$};
\node[anchor=south,font=\scriptsize]
    at (\betastar,{\ymin+0.02}) {$\beta_*$};
\node[anchor=east,font=\scriptsize]
    at ({\xmin-0.03},0) {$0$};

% ---------------------------------------------------------------------
% Callout boxes
% ---------------------------------------------------------------------

\node[
    callout,
    draw=tealcurve,
    fill=green!7,
    text=tealcurve!45!black,
    align=left
]
    (HH) at (1,2.17)
    {Hamiltonian--Hopf\\[-1pt] $\nu=-\mathfrak{q}^2<0$};

    \draw[tealcurve,->,line width=0.65pt]
    (HH.south west) to[out=-100,in=22] (0.55,1.30);

\node[
    callout,
    draw=redcurve,
    fill=orange!8,
    text=redcurve!55!black,
    align=left
]
    (R) at (2.68,2.11)
    {real $1{:}1$ resonance\\[-1pt] $\nu=\kappa^2>0$};

\draw[redcurve,->,line width=0.65pt]
    (R.south west) to[out=-130,in=42] (1.6,1.30);

\node[
    callout,
    draw=axiscolor,
    fill=white,
    align=center,
    text=axiscolor,
    font=\tiny,
    text width=2.25cm
]
    (CT) at (1,0.57)
    {codimension-two point\\[-1pt]
     quadruple zero eigenvalue\\[-1pt]
     $(\beta_*,\alpha_*)$};

\draw[axiscolor,->,line width=0.6pt]
    (CT.north east) -- (1,1);

\node[
    callout,
    draw=axiscolor,
    fill=white,
    align=center,
    text=axiscolor,
    font=\scriptsize,
    text width=2.20cm
]
    (Z) at (2.86,0.57)
    {$0^2$ resonance\\[-1pt] double zero eigenvalue};

\draw[axiscolor,->,line width=0.6pt]
    (Z.north) to[out=90,in=-80] (1.5,1.00);

% Local schematic box.
\node[
    callout,
    draw=gray!35,
    fill=white,
    align=left,
    text=slate,
    font=\tiny,
    anchor=south west,
    text width=5.05cm
]
    at ({\xmin+0.11},0.08)
    {
     $C_1:\;(\beta,\alpha)=(\beta_*+O(\kappa^2),\alpha_*+O(\kappa^4))$\\[-1pt]
     $C_2:\;(\beta,\alpha)=(\beta_*-O(\mathfrak{q}^2),\alpha_*+O(\mathfrak{q}^4))$\\[-1pt]
     $C_4:\;(\beta,\alpha)=(\beta,\alpha_*),\quad \beta>\beta_*$};

% Weak effective gravity box.
\node[
    align=right,
    text=graytext,
    font=\tiny,
    anchor=north east
]
    at ({\xmax-0.02},0.2)
    {weak effective gravity: $\alpha=0$\\[-1pt]
     $g^*=g-2\Omega c=0$};
\end{tikzpicture}
\caption{Local bifurcation diagram in the $(\beta,\alpha)$-plane for the steady $f$-plane problem.
The point $(\beta_\ast,\alpha_\ast)$ is a codimension-two point where the linearized operator has a
quadruple zero eigenvalue. The branch $C_4$ corresponds to the $0^2$ resonance (double zero eigenvalue),
$C_1$ to the real $1:1$ resonance, and $C_2$ to the Hamiltonian--Hopf bifurcation.}
\label{Bifurcation curve}
\end{figure}

\subsubsection{Eigenvalue $0$ with multiplicity 2}\label{double} Assume from now on that $\alpha_0=\alpha_*$.
We define
\begin{equation}\label{eq:phi1-def}
\phi_1(\zeta):=\alpha_*I(\zeta).
\end{equation}
Then
\[
\phi_1(0)=0,
\qquad
\phi_1(1)=1,
\qquad
a(1)^3\phi_1'(1)=\alpha_*.
\]
It follows that
\[
\mathcal{L}w_1=0,
\]
where
\begin{equation}\label{eq:w1-def}
w_1:=
\begin{pmatrix}
1\\[0.3ex]
0\\[0.3ex]
\phi_1(\zeta)\\[0.3ex]
0
\end{pmatrix}
\in D(\mathcal{L}).
\end{equation}

To obtain a Jordan chain of length two, we seek $w_2=(\rho_2,\tau_2,\phi_2,\mathfrak{w}_2)$ such that
\[
\mathcal{L}w_2=w_1.
\]
Writing out the components explicitly gives
\begin{align}
-a(1)^{-1}\tau_2 &= 1,\label{eq:w2-1}\\
a(1)^4\beta_0^{-1}\phi_2'(1)-a(1)\alpha_*\beta_0^{-1}\rho_2&=0,\label{eq:w2-2}\\
-a(\zeta)^{-1}\mathfrak{w}_2&=\phi_1,\label{eq:w2-3}\\
\bigl(a(\zeta)^3\phi_2'(\zeta)\bigr)'&=0,\label{eq:w2-4}
\end{align}
subject to constraints
\[
\phi_2(0)=0,
\qquad
\mathfrak{w}_2(0)=0,
\qquad
\mathfrak{w}_2(1)=\tau_2.
\]
From \eqref{eq:w2-1} and \eqref{eq:w2-3}, we obtain
$\tau_2=-a(1)$ and $\mathfrak{w}_2(\zeta)=-a(\zeta)\phi_1(\zeta)$, respectively.
Since $\phi_1(1)=1$, the boundary condition $\mathfrak{w}_2(1)=\tau_2$ is automatically satisfied. Further, equation \eqref{eq:w2-4} again implies that $\phi_2$ is of the form
\[
\phi_2(\zeta)=C\int_0^\zeta a(s)^{-3}\,\dd s.
\]
Substituting into \eqref{eq:w2-2} yields
\[
a(1)^4\beta_0^{-1}\cdot Ca(1)^{-3}-a(1)\alpha_*\beta_0^{-1}\cdot CI(1)=0,
\]
which is identically satisfied because $\alpha_*I(1)=1$. Thus one may choose $C=0$, so that
\[
\phi_2\equiv 0,\qquad \rho_2=0.
\]
Hence, a convenient generalized eigenvector is
\begin{equation}\label{eq:w2-def}
w_2:=
\begin{pmatrix}
0\\[0.3ex]
-a(1)\\[0.3ex]
0\\[0.3ex]
-a(\zeta)\phi_1(\zeta)
\end{pmatrix},
\end{equation}
and we have the Jordan chain
\begin{equation}\label{eq:jordan-chain-02}
\mathcal{L}w_1=0,
\qquad
\mathcal{L}w_2=w_1.
\end{equation}
\subsubsection{Eigenvalue $0$ with multiplicity 4}\label{quadruple}
After constructing $w_1$ and $w_2$ so that
\[
\mathcal{L}w_1=0,
\qquad
\mathcal{L}w_2=w_1,
\]
one seeks a further generalized eigenvector $w_3$ satisfying
\[
\mathcal{L}w_3=w_2.
\]
It is precisely the solvability condition for this equation that determines a new parameter, $\beta_\ast,$ that will allow 0 to have algebraic multiplicity 4. 
Hence,  as before the first two vectors in the Jordan chain may be written as $w_1$ and $w_2$ in \eqref{eq:w1-def} and \eqref{eq:w2-def}, respectively.

To continue the chain, one looks for $w_3$ of the form
\[
w_3=(-\alpha_*\Theta(1),0,-\alpha_*\Theta(\zeta),0)^\text{T}, \quad \text{where }\Theta(\zeta):=\int_0^\zeta a(r)^{-3}\int_0^r a(t)I(t)\,\dd t\,\dd r.
\]
The equation $\mathcal{L}w_3=w_2$ is solvable if and only if the top boundary condition is satisfied.
Substituting this expression into the boundary condition yields the condition
\[
\beta_0=\alpha_*^2\int_0^1 a(\zeta)I(\zeta)^2\,\dd \zeta.
\]
This quantity is then defined to be $\beta_*$, namely
\begin{equation}\label{eq:beta-star-fplane}
\beta_*:=
\alpha_*^2\int_0^1 a(\zeta)\left(\int_0^\zeta a(s)^{-3}\,\dd s\right)^2\dd \zeta.
\end{equation}

The quantity $\beta_*$ is the extra condition that allows the Jordan chain
at zero to be extended further. In other words, we can conclude that
\[
\alpha_0=\alpha_*
\quad\Longrightarrow\quad
\text{zero has algebraic multiplicity 2},
\]
whereas
\[
\alpha_0=\alpha_*
\text{ and }
\beta_0=\beta_*
\quad\Longrightarrow\quad
\text{zero has algebraic multiplicity 4}.
\]
Finally, we seek \(w_4\) such that $\cL w_4=w_3.$
We define
\[
w_4:=(\rho_4,\tau_4,\phi_4(\zeta),\mathfrak{w}_4(\zeta))^{\text{T}}.
\]
Then, we obtain
\[
\tau_4=a(1)\alpha_*\Theta(1).
\]
Further, we also have
\[
\mathfrak{w}_4(\zeta)=a(\zeta)\alpha_*\Theta(\zeta).
\]
The remaining equations are
\[
a(1)^4\beta_*^{-1}\phi_4'(1)-a(1)\alpha_*\beta_*^{-1}\rho_4=0,
\qquad
\bigl(a(\zeta)^3\phi_4'(\zeta)\bigr)'=0.
\]
Standard computations imply
\[
\phi_4(\zeta)=C_2I(\zeta),
\qquad
\rho_4=\phi_4(1)=C_2I(1)=C_2\alpha_*^{-1},
\]
for some constant $C_2$.
As before, here, we may
choose \(C_2=0\). Therefore,
\[
\phi_4\equiv0,
\qquad
\rho_4=0.
\]
Hence, a convenient choice for $w_4$ is
\[
w_4=
\begin{pmatrix}
0\\[0.4ex]
a(1)\alpha_*\Theta(1)\\[0.4ex]
0\\[0.4ex]
a(\zeta)\alpha_*\Theta(\zeta)
\end{pmatrix}.
\]

\medskip

In summary, using the linearized operator \(\cL\), we obtain the Jordan chain
\[
\cL w_1=0,
\qquad
\cL w_2=w_1,
\qquad
\cL w_3=w_2,
\qquad
\cL w_4=w_3.
\]
The first two vectors give rise to the Hamiltonian \(0^2\) resonance, while the full
length-four chain occurs only at the codimension-two point \((\beta_*,\alpha_*)\) which gives rise to Hamiltonian real 1:1 which will be explained in full details in the next two sections.
%%%%%%%%%%%%%%%%%%%%%%%%%%%%%%%%%%%%%%%%%%%%%%%%
\section{Hamiltonian Resonances}\label{sec:Resonances}

\subsection{The Hamiltonian $0^2$ resonance}\label{sec:02-resonance}

In this section we formulate the $f$-plane analogue of the Hamiltonian $0^2$ resonance
treated by Groves and Wahl\'en \cite{GrovesWahlen2007}.
Using the parameter point $\alpha_0=\alpha_\ast$ found earlier, the linearized operator has
a double eigenvalue at the origin. We then construct the corresponding two-dimensional center space 
and derive the reduced Hamiltonian normal form whose homoclinic solutions generate
small-amplitude solitary waves. We have included the majority of tedious calculations in this section that are skipped in \cite{GrovesWahlen2007}.

% \subsection{The critical curve and the zero eigenvalue}
% We seek the analogue of the curve $C_4$ in the $(\beta,\alpha)$-plane. Observe that the capillary parameter $\beta_0$ does not enter the condition
% \eqref{eq:alpha-star-fplane}; it enters only at the next stage, when one asks for a generalized
% eigenvector and hence algebraic multiplicity two.

% \begin{remark}
% Formula \eqref{eq:alpha-star-fplane} is the direct counterpart of the condition
% $\alpha=\alpha_*$ in Groves--Wahl\'en. In the present $f$-plane setting, however, the
% effective gravity parameter is
% \[
% \alpha=\frac{(g-2\Omega c)d^3}{m^2},
% \]
% so fixing $\alpha_0=\alpha_*$ is equivalent to selecting a distinguished wave speed $c_0$.
% \end{remark}

Let $\Psi$ denote the constant symplectic form given in \eqref{eq:symplectic form flattened}.
A direct calculation using $w_1$ and $w_2$ in \eqref{eq:w1-def} and \eqref{eq:w2-def} yields
\begin{equation}
\begin{aligned}\label{02symplectic}
\Psi(w_1,w_2):=\Phi^{0}_0(w_1,w_2)
&
=
\int_0^1 \bigl(-a(\zeta)\phi_1(\zeta)\bigr)\phi_1(\zeta)\,\dd \zeta
-\frac{\beta_0}{a(1)}(-a(1))
\\&=
-\int_0^1 a(\zeta)\phi_1(\zeta)^2\,\dd \zeta+\beta_0\\
&=\beta_0-\alpha_*^2\int_0^1 a(\zeta)\left(\int_0^\zeta a(s)^{-3}\,\dd s\right)^2\dd \zeta\\
&=\beta_0-\beta_\ast.
\end{aligned}
\end{equation}
Using the quantity computed above, we define the normalization of vectors $w_1$ and $w_2$ as follows
\begin{equation}\label{eq:symplectic-basis-fplane}
e:=(\beta_0-\beta_\ast)^{-1/2}w_1,
\qquad
f:=(\beta_0-\beta_\ast)^{-1/2}w_2.
\end{equation}
As a result, we obtain
\begin{equation}\label{eq:symplectic-normalization}
\Psi(e,f)=1,
\qquad
\Psi(e,e)=\Psi(f,f)=0.
\end{equation}
Thus, the vectors $\{e,f\}$ form a symplectic basis for the two-dimensional generalized eigenspace
associated with the double zero eigenvalue. Hence, the center subspace of the center manifold is precisely given by
\[
X_c=\operatorname{span}\{e,f\}.
\]
Using Darboux coordinates on the center manifold, the full variable of an element in the center manifold is given by 
\[
u_1=\tilde{u}_1+\tilde{r}(\tilde{u}_1;\varepsilon)=q e+p f+\tilde{r}(\tilde{u}_1;\varepsilon),
\]
where $q=q(x)$ and $p=p(x)$ are the finite-dimensional center coordinates, $\tilde{u}_1$ is the center component, and $\tilde{r}$ is the hyperbolic component.
Let $\widetilde H^\eps(q,p)$ denote the reduced Hamiltonian on the center manifold, where
\[
\eps=(\eps_1,\eps_2),
\qquad
\beta=\beta_0+\eps_1,
\qquad
\alpha=\alpha_*+\eps_2.
\]
Taking advantage of the reduced Hamiltonian 
\[
\tilde{H}^\eps(\tilde{u}_1)=K^\eps(\tilde{u}_1+\tilde{r}(\tilde{u}_1;\varepsilon))
\] and expanding it around the equilibrium $(\tilde{u}_1,\varepsilon)=(0,0)$ yields
\begin{equation}\label{eq:H-expansion-0^2}
\tilde H^\varepsilon
=
\tilde H^{0,0}_2
+\varepsilon_1\tilde H^{1,0}_2
+\varepsilon_2\tilde H^{0,1}_2
+\varepsilon_1^2\tilde H^{2,0}_2
+\tilde H^{0,0}_3
+\cdots.
\end{equation}
The superscript $i,j$ in $\tilde{H}^{i,j}_{k}$ means the order in $\varepsilon_1,\varepsilon_2$, whereas the subscript $k$ means the degree in $q$ or $p$.
For the \(0^2\) resonance, we fix \(\varepsilon_1=0\) and write
\(\varepsilon_2=\delta\), where \(0<|\delta|\ll1\). The homoclinic branch
exists on the side
\[
    \frac{\delta}{\beta_0-\beta_*}>0.
\]
Thus, we move transversely to the critical curve by varying the effective gravity parameter
while keeping $\beta=\beta_0$ fixed. Hence, equation \eqref{eq:H-expansion-0^2} reads
\begin{equation}
\tilde H^{0,\delta}
=
\tilde H^{0,0}_2+
+\delta\tilde H^{0,1}_2
+\tilde H^{0,0}_3
+\cdots.
\end{equation}
To determine the pure quadratic part of the reduced Hamiltonian above, we need to take advantage of the identity 
\begin{equation}\label{eq:quadratic-identity}
\Psi(\mathcal{L}u_1^{(1)},u_1^{(2)})=2K^{0,0}_2[u_1^{(1)},u_1^{(2)}],
\end{equation}
where $K^{0,0}_2$ is the quadratic part of the Hamiltonian \eqref{eq:Keps-fplane} in the flattened variables.
Using \eqref{eq:jordan-chain-02} and \eqref{eq:symplectic-normalization}, one finds
\[
K^{0,0}_2[e,e]=0,
\qquad
K^{0,0}_2[e,f]=0,
\qquad
K^{0,0}_2[f,f]=\frac12.
\]
Therefore,
\begin{equation}\label{eq:H2-02}
\widetilde H^{0,0}_2(q,p)=\frac12 p^2.
\end{equation}
This is the signature of the Hamiltonian $0^2$ resonance: the quadratic Hamiltonian is
degenerate in the $q$-direction and contains only the momentum contribution.

The next nontrivial terms are the parameter-dependent quadratic correction and the cubic
term. More precisely, the reduced Hamiltonian takes the form
\begin{equation}\label{eq:reduced-H-02}
\widetilde H^{0,\delta}(q,p)
=
\frac12p^2+c_1\delta q^2+c_2q^3
+O\!\left(|p||(q,p)||(\delta,q,p)|\right)
+O\!\left(|(q,p)|^2|(\delta,q,p)|^2\right),
\end{equation}
where the coefficients $c_1$ and $c_2$ are determined by the Taylor expansion of the
Hamiltonian and the center-manifold reduction map. More precisely, using \eqref{eq:quadratic-identity}
\begin{equation}\label{eq:c1-def}
\begin{aligned}
c_1
&=
K^{0,1}_2[e,e]
+2K^{0,0}_2[e,r_{10}^{0,1}]\\&
=-\frac{1}{2(\beta_0-\beta_\ast)}<0,
\end{aligned}
\end{equation}
and
\begin{equation}\label{eq:c2-def}
\begin{aligned}
c_2
&=
K^{0,0}_3[e,e,e]
+2K^{0,0}_2[e,r_{20}^{0,0}]+K^{0,0}_3[e,e,r_{10}^{0,0}]\\&
=\frac{-1}{2(\beta_0-\beta_\ast)^{3/2}}(\alpha_\ast)^3 \int_0^1 a(\zeta)^{-5}\,\dd \zeta<0.
\end{aligned}
\end{equation}
The second term in the expression of $c_2$ above vanishes due to the fact that $\mathcal{L}e=0$. The third term vanishes because $\tilde{r}_{10}^{0,0}=0$ due to the condition $\tilde{r}(0;0)=0$ and $d_1\tilde{r}[0;0]=0.$
The superscript $i,j$ in $\tilde{r}^{i,j}_{kl}$ means the order in $\varepsilon_1,\varepsilon_2$, whereas the subscript $k,l$ means degree in $q,p$. These terms come from the Taylor expansion of
the reduction map. 

As a result, the associated Hamilton equations  generated by \eqref{eq:reduced-H-02} take the form
\begin{align}
q_x&=p+R_1(q,p,\delta),\label{eq:qeq-02}\\
p_x&=-2c_1\delta q-3c_2q^2+R_2(q,p,\delta),\label{eq:peq-02}
\end{align}
where
\[
R_1=O\!\left(|(q,p)||(\delta,q,p)|\right),
\qquad
R_2=O\!\left(|(q,p)||(\delta,q,p)|^2\right).
\]
We define
\[
\mathfrak{c}_1:=-2c_1>0,
\qquad
\mathfrak{c}_2:=-3c_2>0.
\]
Then \eqref{eq:qeq-02}--\eqref{eq:peq-02} become
\begin{align}
q_x&=p+R_1(q,p,\delta),\label{eq:qeq-02b}\\
p_x&=\mathfrak{c}_1\delta q+\mathfrak{c}_2 q^2+R_2(q,p,\delta).\label{eq:peq-02b}
\end{align}

We now introduce the following scalings
\begin{equation}\label{eq:02-scaling}
X=\sqrt{\mathfrak{c}_1\delta}\,x,
\qquad
q(x)=\frac{3\mathfrak{c}_1}{2\mathfrak{c}_2}\,\delta\,Q(X),
\qquad
p(x)=\frac{3\mathfrak{c}_1^{3/2}}{2\mathfrak{c}_2}\,\delta^{3/2}\,P(X).
\end{equation}
Substituting \eqref{eq:02-scaling} into \eqref{eq:qeq-02b}--\eqref{eq:peq-02b} yields
\begin{align}
Q_X&=P+\delta^{1/2}R_3(Q,P,\delta),\label{eq:Qeq}\\
P_X&=Q+\frac32Q^2+\delta^{1/2}R_4(Q,P,\delta),\label{eq:Peq}
\end{align}
where $R_3$ and $R_4$ are smooth and uniformly bounded on bounded subsets.
In the singular limit $\delta=0$, the reduced system is
\begin{equation}\label{eq:model-02}
Q_X=P,
\qquad
P_X=Q+\frac32Q^2.
\end{equation}
Eliminating $P$ gives
\begin{equation}\label{eq:scalar-model-02}
Q_{XX}=Q+\frac32Q^2.
\end{equation}
It is known that equation \eqref{eq:scalar-model-02} possesses the explicit homoclinic solution
\begin{equation}\label{eq:homoclinic-02}
Q(X)=-\operatorname{sech}^2\!\left(\frac X2\right),
\qquad
P(X)=Q'(X).
\end{equation}
The reduced system \eqref{eq:Qeq}--\eqref{eq:Peq} is reversible under
\[
S(Q,P)=(Q,-P).
\]

At $\delta=0$, the stable manifold of origin intersects the symmetric section
\[
\operatorname{Fix}S=\{P=0\}
\]
transversely at the point corresponding to the homoclinic orbit \eqref{eq:homoclinic-02}.
By smooth dependence of invariant manifolds on parameters, the intersection persists for all
sufficiently small $|\delta|$ on the side $\delta/(\beta_0-\beta_*)>0$. Hence, the reduced system possesses a symmetric homoclinic
solution in this parameter regime.
%%%%%%%%%%%%%%%%%%%%%%%%%%%%%%%%%%%%%%%%%%%%%%%%%%%%%%%%
\\

The previous normalization is convenient on the branch \(\beta_0>\beta_*\).
For the polarity statement, however, it is preferable to return to the
unnormalized Jordan basis \(\{w_1,w_2\}\), since this basis remains real on
both sides of \(\beta_*\). This allows us to treat the elevation and depression
branches simultaneously.
\begin{proof}[Proof of Corollary~\ref{cor:threshold}]\label{proof of cor}
Let
\[
    s:=\beta_0-\beta_\ast,
    \qquad
    \delta:=\alpha-\alpha_\ast,
\]
and define
\[
    C_\ast:=
    \alpha_\ast^3\int_0^1 a(\zeta)^{-5}\,\dd \zeta>0.
\]
We use the unnormalized center coordinates associated with the Jordan chain
\[
    \mathcal L w_1=0,
    \qquad
    \mathcal L w_2=w_1,
\]
and write, on the center manifold,
\[
    u(x)=A(x)w_1+B(x)w_2+\widetilde r(A,B;\delta),
\]
where the dependence on the vertical variable $\zeta$ is contained in the basis vectors and in the function-valued components of $\widetilde r$.
Since the first component of \(w_1\) is \(1\), while the first component of
\(w_2\) is \(0\), the free-surface displacement satisfies
\begin{equation}\label{eq:eta-A-leading}
    \eta(x)=\rho(x)
    =
    A(x)
    +O\bigl(|(A,B)|^2+|\delta||(A,B)|\bigr).
\end{equation}
Thus, the sign of the leading-order surface profile is determined by \(A\).

In these unnormalized coordinates, the restriction of the symplectic form to
\(\operatorname{span}\{w_1,w_2\}\) is
\[
    \Psi=s \,\dd A\wedge \dd B,
\]
because \(\Psi(w_1,w_2)=\beta_0-\beta_\ast=s\). The reduced Hamiltonian
computed in the Hamiltonian \(0^2\) normal form becomes
\begin{equation}\label{eq:H-unnormalized-02}
    \widetilde H(A,B;\delta)
    =
    \frac{s}{2}B^2
    -\frac{\delta}{2}A^2
    -\frac{C_\ast}{2}A^3
    +\textup{higher-order terms}.
\end{equation}
Indeed, when \(s>0\), this follows from the normalized variables
\(q=\sqrt s\,A\), \(p=\sqrt s\,B\), together with
\[
    c_1=-\frac{1}{2s},
    \qquad
    c_2=-\frac{C_\ast}{2s^{3/2}}.
\]
The unnormalized form \eqref{eq:H-unnormalized-02} is the sign-independent
version of the same normal-form calculation and remains valid for
\(s\neq0\).

Hamilton equations with respect to the symplectic form
\(s\,\dd A\wedge \dd B\) read
\begin{equation}
\begin{aligned}
    &A_x=B+\textup{higher-order terms},\\ 
    &B_x
    =
    \frac{\delta}{s}A
    +
    \frac{3C_\ast}{2s}A^2
    +\textup{higher-order terms}.
\end{aligned}  
\end{equation}

Hence, to leading order,
\begin{equation}\label{eq:A-leading-02}
    A_{xx}
    =
    \frac{\delta}{s}A
    +
    \frac{3C_\ast}{2s}A^2.
\end{equation}
The origin is hyperbolic precisely when
\[
    \frac{\delta}{s}>0, \text{ or equivalently } (\alpha-\alpha_\ast)(\beta_0-\beta_\ast)>0.
\]
In this case, the leading-order equation \eqref{eq:A-leading-02} has the
homoclinic solution
\begin{equation}\label{eq:A-homoclinic}
    A(x)
    =
    -\frac{\delta}{C_\ast}
 \operatorname{sech}^2\!\left(
        \frac12\sqrt{\frac{\delta}{s}}\,x
    \right),
\end{equation}
where the square root is taken of the positive number \(\delta/s\). The
reversible homoclinic persistence argument used above then yields a nearby
symmetric homoclinic solution of the full reduced system for
\(0<|\delta|\ll1\) satisfying
\begin{equation}\label{eq:eta-leading-profile}
    \eta(x)
    =
    -\frac{\delta}{C_\ast}
   \operatorname{sech}^2\!\left(
        \frac12\sqrt{\frac{\delta}{s}}\,x
    \right)
    +O(|\delta|^{3/2}).
\end{equation}
Consequently, the leading-order polarity is
\[
    \operatorname{sgn}\eta
    =
    -\operatorname{sgn}\delta
    =
    -\operatorname{sgn}(\alpha-\alpha_\ast).
\]

It remains to express this criterion in terms of the Coriolis parameter.
Recall that
\[
    \alpha=\frac{g_\ast d^3}{m^2}
    =
    \frac{(g-2\Omega c)d^3}{m^2}.
\]
At the Hamiltonian \(0^2\) resonance,
\[
    \alpha_\ast
    =
    \frac{g_\ast^{(0^2)}d^3}{m^2},
    \qquad
    g_\ast^{(0^2)}
    =
    g-2\Omega_c c^{(0^2)},
\]
where
\[
    \Omega_c
    =
    \frac{g-g_\ast^{(0^2)}}{2c^{(0^2)}}.
\]
Therefore, to leading order along the local branch, with
\(c=c^{(0^2)}\) at the bifurcation point,
\[
    \delta
    =
    \alpha-\alpha_\ast
    =
    -\frac{2c^{(0^2)}d^3}{m^2}
    (\Omega-\Omega_c)
    +\textup{higher-order terms}.
\]
Since \(c^{(0^2)}>0\), this implies
\[
    \Omega>\Omega_c
    \quad\Longleftrightarrow\quad
    \delta<0,
\]
and
\[
    \Omega<\Omega_c
    \quad\Longleftrightarrow\quad
    \delta>0.
\]

Combining this with the existence condition
\[
    \frac{\delta}{s}>0,
    \qquad s=\beta_0-\beta_\ast,
\]
we obtain the two possible polarity regimes:
\[
    \beta_0<\beta_\ast
    \quad\text{and}\quad
    \Omega>\Omega_c
    \quad\Longrightarrow\quad
    \delta<0,\ s<0,
\]
and hence \(\eta>0\) to leading order. This gives solitary waves of
elevation. Similarly,
\[
    \beta_0>\beta_\ast
    \quad\text{and}\quad
    \Omega<\Omega_c
    \quad\Longrightarrow\quad
    \delta>0,\ s>0,
\]
and hence \(\eta<0\) to leading order. This gives solitary waves of
depression. The proof of the corollary is complete.
\end{proof}

\subsection{The Hamiltonian real $1{:}1$ resonance}\label{sec:real11}

This section is devoted to the analysis the Hamiltonian real $1{:}1$ resonance of $f$-plane.
The geometric mechanism is the following: two pairs of real spatial eigenvalues
collide at nonzero points on the real axis and then leave the real axis as a complex quartet.
Recall that in the scalar spectral formulation in proposition~\ref{prop:SL-reduction}, each eigenvalue of the scalar problem corresponds to a
double positive eigenvalue $\nu=\kappa^2>0$ of the Sturm--Liouville problem. Hence, positive values of $\nu$ correspond to real spatial
eigenvalues, while negative values of $\nu$ correspond to purely imaginary spatial eigenvalues.

The curve $C_1$ in Figure~\ref{Bifurcation curve} consists of points at which a positive
eigenvalue $\nu=\kappa^2$ has algebraic multiplicity two; equivalently, the line
$s=\hat\alpha-\hat\beta\,\nu$ is tangent to the graph $s=B(\nu)$ at some
$\nu=\kappa_0^2>0$. The local part of $C_1$ near the point
$(\beta_*,\alpha_*)$ is obtained by perturbing away from a codimension-two configuration, $(\beta_*,\alpha_*)$,
for which the operator $\mathcal{L}$ has a quadruple zero eigenvalue. Since the zero eigenvalue has algebraic multiplicity four at the point
$(\beta_*,\alpha_*)$,
the center subspace of the center manifold is given by
\[
X_c=\operatorname{span}\{w_1,w_2,w_3,w_4\},
\]
where $w_i$ are all generalized eigenvectors found in subsection~\ref{quadruple}.

Let \(\Psi\) denote the constant symplectic form associated with the linearized Hamiltonian
system in the flattened variables as used previously in \eqref{02symplectic}. It is not hard to check that, the symplectic pairings satisfy
\begin{equation}\label{eq:04-pairings}
\Psi(w_1,w_4)=-d_1,
\qquad
\Psi(w_2,w_3)=d_1,
\qquad
\Psi(w_3,w_4)=d_2,
\end{equation}
and that all remaining pairings vanish where 
\begin{equation}
    \begin{aligned}
        &d_1=\alpha_\ast \beta_\ast \Theta(1)-\alpha_\ast^2 \int_0^1 a(\zeta) I(\zeta) \Theta(\zeta)\;\dd \zeta>0,\\
        &d_2=\alpha_\ast^2 \beta_\ast \Theta^2(1)-\alpha_\ast^2 \int_0^1 a(\zeta) I(\zeta) \Theta(\zeta)\;\dd \zeta.
    \end{aligned}
\end{equation}

Using both quantities $d_1$ and $d_2$, we define the following normalizations
\begin{equation}\label{eq:04-symplectic-basis}
e_1:=d_1^{-1/2}(w_4+d_3w_2),
\qquad
e_2:=d_1^{-1/2}w_2,
\qquad
f_1:=d_1^{-1/2}w_1,
\qquad
f_2:=d_1^{-1/2}w_3,
\end{equation}
where \(d_3:=d_2/d_1\). 
It is also clear that the linearized operator $\mathcal{L}$ satisfies the following equations
\[
\mathcal{L}e_1=f_2+d_3f_1,\qquad \mathcal{L}e_2=f_1,\qquad \mathcal{L}f_1=0,\qquad \mathcal{L}f_2=e_2,
\]
together with the symplectic relations
\[
\Psi(e_i,e_j)=0,\qquad \Psi(f_i,f_j)=0,\qquad \Psi(e_i,f_j)=\delta_{ij},\qquad \Psi(f_j,e_i)=-\delta_{ij}.
\]

%\subsection{Local Parametrization of $C_1$}
In the present notation, the algebraic multiplicity four of zero translates to the  scalar problem having a double eigenvalue at $\nu=0.$ At this point, the intersection and tangency conditions read
\begin{equation}\label{eq:C1-basepoint}
    B(0)=\widehat{\alpha}_\ast,
    \qquad
    B'(0)=-\widehat{\beta}_*,
\end{equation}
where recall
\begin{equation}\label{eq:C1-tangency-system}
    B(\nu)=\widehat{\alpha}-\widehat{\beta}\nu,
    \qquad
    B'(\nu)=-\widehat{\beta}.
\end{equation}
In \eqref{eq:C1-basepoint}, the first condition determines the critical gravity value, while the second determines the
critical capillarity value.

The crucial observation is that the scalar spectral problem depends on \(\kappa\) only through
\(\nu=\kappa^2\). Consequently, replacing \(\kappa\) by \(-\kappa\) leaves the problem
unchanged. Therefore, the local branch of \(C_1\) is naturally an even function of \(\kappa\),
and so its Taylor expansion contains only even powers of $\kappa$. More precisely, the expansion for $B$ and its derivative read
% \begin{equation}\label{eq:C1-even-expansion}
%     \beta_\kappa=\beta_*+\beta_2\kappa^2+\beta_4\kappa^4+\cdots,
%     \qquad
%     \alpha_\kappa=\alpha_*+\alpha_2\kappa^2+\alpha_4\kappa^4+\cdots,
% \end{equation}
\[
B(\kappa^2)=B(0)+B'(0)\kappa^2+\frac12 B''(0)\kappa^4+O(\kappa^6),
\]
and
\[
B'(\kappa^2)=B'(0)+B''(0)\kappa^2+O(\kappa^4).
\]
Combining this together with  the slope equation in \eqref{eq:C1-tangency-system} yields
\[
\widehat{\beta}-\widehat{\beta}_\ast=-B''(0)\kappa^2+O(\kappa^4).
\]
Similarly, 
\[
\widehat{\alpha}-\widehat{\beta}_\ast\kappa^2- \widehat{\alpha}_\ast=B(\kappa^2)-B(0)=\kappa^2 B'(0)+\frac{\kappa^4}{2}B''(0)+O(\kappa^6).
\]
Thus, using the definition of $\widehat{\alpha}$ and $\widehat{\beta}$, one obtains to the following local parametrization 
\begin{equation}\label{eq:C1-local-expansion}
    \beta=\beta_*+\beta_2\kappa^2+O(\kappa^4),
    \qquad
    \alpha=\alpha_*+\alpha_4\kappa^4+O(\kappa^6).
\end{equation}
% where the subscript $\kappa$ just indicates the dependence of $\alpha$ and $\beta$ on $\kappa$.
Hence, we can conclude that the first correction in \(\beta\) is quadratic, while the first nontrivial
correction in \(\alpha\) is quartic.

In applications, it is convenient to repackage this
parametrization using a small parameter \(\kappa \sim \mu>0\) and a bifurcation parameter
\(\delta\). We introduce
\begin{equation}\label{eq:C1-mu-delta}
    \varepsilon_1=\beta_2(1+\delta)\mu^2,
    \qquad
    \varepsilon_2=\alpha_4\mu^4,
\end{equation}
so that
\begin{equation}\label{eq:C1-parameters}
    \beta=\beta_*+\beta_2(1+\delta)\mu^2,
    \qquad
    \alpha=\alpha_*+\alpha_4\mu^4,
\end{equation}
where \(\beta_2\) and \(\alpha_4\) are the first non-vanishing coefficients. The choice
\(\delta=0\) corresponds to the first-order approximation of \(C_1\), and varying \(\delta\)
moves the parameters to one side or the other of the resonance curve.

\begin{remark}
The role of the codimension-two point is essential. Without the extra degeneracy at
\((\beta_*,\alpha_*)\), one would only conclude that \(\alpha\) and \(\beta\)
are even functions of \(\kappa\). The fact that \(\beta-\beta_*\) begins at order
\(\kappa^2\), while \(\alpha-\alpha_*\) begins at order \(\kappa^4\), is a direct
consequence of the $0^4$ degeneracy of the scalar problem at \(\nu=0\).
\end{remark}

%\subsection{The reduced Hamiltonian and equations}

We now explain in detail how the reduced Hamiltonian here is computed, and how
this leads to the truncated Hamiltonian used in the Hamiltonian real \(1{:}1\) analysis. Let \(\widetilde H^{\varepsilon}\) denote the reduced Hamiltonian on the center manifold,
written in canonical coordinates
\[
\tilde{u}_1=q_1e_1+q_2e_2+p_1f_1+p_2f_2.
\]
Expanding the reduced Hamiltonian near \((\tilde u_1,\varepsilon)=(0,0)\) yields
\begin{equation}\label{eq:H-expansion-real11}
\tilde H^\varepsilon
=
\tilde H^{0,0}_2
+\varepsilon_1\tilde H^{1,0}_2
+\varepsilon_2\tilde H^{0,1}_2
+\varepsilon_1^2\tilde H^{2,0}_2
+\tilde H^{0,0}_3
+\cdots.
\end{equation}
Here,

\begin{itemize}
    \item the subscript denotes the degree in \((q_1,q_2,p_1,p_2)\),
    \item the superscript \((a,b)\) denotes the coefficient of \(\varepsilon_1^a\varepsilon_2^b\).
\end{itemize}
Additionally, due to reversibility, we have the following identity
\[
\tilde H^\varepsilon(-q_1,-q_2,p_1,p_2)=\tilde H^\varepsilon(q_1,q_2,p_1,p_2).
\]
Hence, only monomials that contain an even total number of \(q\)-factors may appear.
Going back to \eqref{eq:H-expansion-real11}, we can express each term as follows,
\begin{equation}\label{eq:expansion}
    \begin{aligned}
        &\tilde{H}^{0,0}_2=c^{0,0}_1 p_1^2+c^{0,0}_2 p_1 p_2+c^{0,0}_3 p_2^2+ c^{0,0}_4q_1^2+c^{0,0}_5q_1q_2 +c^{0,0}_6q_2^2,\\&
        \tilde{H}^{1,0}_2=c^{1,0}_1 p_1^2+c^{1,0}_2 p_1 p_2+c^{1,0}_3 p_2^2+ c^{1,0}_4q_1^2+c^{1,0}_5q_1q_2 +c^{1,0}_6q_2^2,\\&
        \tilde{H}^{0,1}_2=c^{0,1}_1 p_1^2+c^{0,1}_2 p_1 p_2+c^{0,1}_3 p_2^2+ c^{0,1}_4q_1^2+c^{0,1}_5q_1q_2 +c^{0,1}_6q_2^2,\\&
        \tilde{H}^{0,0}_3=c^{0,0}_1 p_1^3+c^{0,0}_2 p_1^2 p_2+c^{0,0}_3 c^{0,0}_4 p_1 p_2^2+ c^{0,0}_5 p_2^3 + c^{0,0}_6 q_1 q_2 p_1 +  c^{0,0}_7 q_1^2p_1+ c^{0,0}_8 q_1^2p_2+c^{0,0}_{9} q_1q_2p_2+ c^{0,0}_{10} q_2^2p_2 .
    \end{aligned}
\end{equation}
Near the local branch of the real \(1{:}1\) curve \(C_1\), we anticipate the following scalings
\begin{equation}\label{scaling}
q_1(x)\sim \mu^7Q_1(X),
\qquad
q_2(x)\sim \mu^5Q_2(X),
\qquad
p_1(x)\sim \mu^4P_1(X),
\qquad
p_2(x)\sim \mu^6P_2(X).
\end{equation}
These scalings assign the weighted degrees to all $q_1,q_2,p_1,p_2, \varepsilon_1,\varepsilon_2$ in the following manner
\[
\deg(q_1)=7,\quad
\deg(q_2)=5,\quad
\deg(p_1)=4,\quad
\deg(p_2)=6,\quad
\deg(\varepsilon_1)=2,\quad
\deg(\varepsilon_2)=4.
\]

We now keep only those monomials of weighted degree at most \(12\), since these are
terms that survive in the leading-order reduced equation. Therefore, in these new scaled variables, some terms will appear as higher order and are absorbed into the
remainder while some others are retained.
Upon anticipation in collecting all terms of weighted degree at most \(12\), we obtain the following truncated reduced hamiltonian
\begin{equation}\label{eq:truncated-H-real11}
\begin{aligned}
\tilde H^\varepsilon(q_1,q_2,p_1,p_2)
&=
c^{0,0}_1p_2^2+c^{0,0}_5 q_1q_2
+c^{1,0}_1\varepsilon_1p_1^2
+c^{1,0}_2\varepsilon_1p_1p_2
+c^{1,0}_6\varepsilon_1q_2^2
+c^{0,1}_1\varepsilon_2p_1^2
+c^{2,0}_1\varepsilon_1^2p_1^2
+c^{0,0}_1\,p_1^3\\&\qquad
+R(q_1,q_2,p_1,p_2),
\end{aligned}
\end{equation}
where $R$ denotes the remainder portion.
 Next, we will compute each coefficient in \eqref{eq:truncated-H-real11} explicitly. We begin with

\paragraph{\underline{\textbf{ Computation for \(c^{0,0}_1\) and \(c^{0,0}_5\)}}}
Notice, first of all, that
\[
\tilde H^{0,0}_2(q_1,q_2,p_1,p_2)=K^{0,0}_2[\tilde u_1,\tilde u_1].
\]
We compute the relevant bilinear terms.
First, 
\[
c^{0,0}_5=2K^{0,0}_2[e_1,e_2]
=
\Psi(\mathcal{L}e_1,e_2)
=
\Psi(f_2+d_3f_1,e_2)=-1.
\]
Further, observe that
\[
2K^{0,0}_2[f_2,f_2]
=
\Psi(\mathcal{L}f_2,f_2)
=
\Psi(e_2,f_2)=1,
\]
so
\[
c^{0,0}_1=K^{0,0}_2[f_2,f_2]=\frac12.
\]

\paragraph{\underline{\textbf{ Computation for \(c^{1,0}_1\), \(c^{1,0}_2\) and \(c^{1,0}_6\)}}}
We begin by computing $c^{1,0}_1$ and $c^{1,0}_6$. Let $u=\tilde{u_1}+\tilde{r}(\tilde{u}_1;\varepsilon)$ lies on the center manifold. Then, one can check that we have the following identity
\begin{equation}\label{identity}
\mathcal{L}\tilde r(\tilde u_1;\varepsilon)
-
d_1\tilde r[\tilde u_1;\varepsilon](\mathcal{L}\tilde u_1)
=
-
\mathcal{N}^\varepsilon\!\bigl(\tilde u_1+\tilde r(\tilde u_1;\varepsilon)\bigr)
+
d_1\tilde r[\tilde u_1;\varepsilon]\bigl(P^\varepsilon(\tilde u_1)\bigr)
+
P^\varepsilon(\tilde u_1),
\end{equation} where $P^\varepsilon(\tilde u_1):=\mathcal{L}\tilde{u}_1 +\mathcal{PN}(\tilde{u}_1+r(\tilde{u}_1;\varepsilon);\varepsilon)$ is the reduced hamiltonian equation. It is not hard to see that
\[
P^{1,0}_1(\tilde u_1)= \frac{\partial \tilde{H}^{1,0}_2}{\partial p_1} e_1 +\frac{\partial \tilde{H}^{1,0}_2}{\partial p_2} e_2 -\frac{\partial \tilde{H}^{1,0}_2}{\partial q_1} f_1-\frac{\partial \tilde{H}^{1,0}_2}{\partial q_2} f_2.
\]
This combined together with the expression of $\tilde{H}^{1,0}_2$ in \eqref{eq:expansion} allows us to infer
\begin{equation}\label{eq:P110}
P^{1,0}_1(\tilde u_1)
=
2c^{1,0}_1p_1e_1
+
c^{1,0}_2p_2e_1
+
c^{1,0}_2p_1e_2
+
2c^{1,0}_3p_2e_2
-
(2c^{1,0}_4q_1+c^{1,0}_5q_2)f_1
-
(c^{1,0}_5q_1+2c^{1,0}_6q_2)f_2.
\end{equation}

Moreover, straightforward but tedious calculations yield 
\begin{equation}\label{eq:N110-values}
\mathcal{N}^{1,0}_1(e_1)=0,
\qquad
\mathcal{N}^{1,0}_1(e_2)=0,
\qquad
\mathcal{N}^{1,0}_1(f_1)=0,
\qquad
\mathcal{N}^{1,0}_1(f_2)=\left(0,d_1^{-1/2}\beta_*^{-1}\alpha_*^{-1}a(1),0,0\right).
\end{equation}

Using \eqref{identity} together with the expression of $P^{1,0}_1(\tilde u_1)$ and comparing the coefficients of \(\varepsilon_1\tilde u_1\),  we obtain
the system
\begin{equation}\label{eq:homo-1000}
\mathcal{L}\tilde r^{1,0}_{1000}
-
\tilde r^{1,0}_{0001}
-
d_3\tilde r^{1,0}_{0010}
=
-
\mathcal{N}^{1,0}_1[e_1]
-
2c^{1,0}_4f_1
-
c^{1,0}_5f_2,
\end{equation}
\begin{equation}\label{eq:homo-0100}
\mathcal{L}\tilde r^{1,0}_{0100}
-
\tilde r^{1,0}_{0010}
=
-
\mathcal{N}^{1,0}_1[e_2]
-
c^{1,0}_5f_1
-
2c^{1,0}_6f_2,
\end{equation}
\begin{equation}\label{eq:homo-0010}
\mathcal{L}\tilde r^{1,0}_{0010}
=
-
\mathcal{N}^{1,0}_1[f_1]
+
2c^{1,0}_1e_1,
\end{equation}
\begin{equation}\label{eq:homo-0001}
\mathcal{L}\tilde r^{1,0}_{0001}
-
\tilde r^{1,0}_{0100}
=
-
\mathcal{N}^{1,0}_1[f_2]
+
2c^{1,0}_3e_2.
\end{equation}
Using \eqref{eq:N110-values}, these simplify to
\begin{equation}\label{eq:homo-1000-simplified}
\mathcal{L}\tilde r^{1,0}_{1000}
-
\tilde r^{1,0}_{0001}
-
d_3\tilde r^{1,0}_{0010}
=
-
2c^{1,0}_4f_1
-
c^{1,0}_5f_2,
\end{equation}
\begin{equation}\label{eq:homo-0100-simplified}
\mathcal{L}\tilde r^{1,0}_{0100}
-
\tilde r^{1,0}_{0010}
=
-
c^{1,0}_5f_1
-
2c^{1,0}_6f_2,
\end{equation}
\begin{equation}\label{eq:homo-0010-simplified}
\mathcal{L}\tilde r^{1,0}_{0010}
=
2c^{1,0}_1e_1,
\end{equation}
\begin{equation}\label{eq:homo-0001-simplified}
\mathcal{L}\tilde r^{1,0}_{0001}
-
\tilde r^{1,0}_{0100}
=
-
\mathcal{N}^{1,0}_1[f_2]
+
2c^{1,0}_3e_2.
\end{equation}
Using \eqref{eq:homo-0010-simplified}, we obtain
\[
\mathcal{L}\tilde r^{1,0}_{0010}=2c^{1,0}_1e_1.
\]
We pair both sides with \(f_1\) via \(\Psi\) and obtain
\[
\Psi(\mathcal{L}\tilde r^{1,0}_{0010},f_1)
=
2c^{1,0}_1\Psi(e_1,f_1)
=
2c^{1,0}_1.
\]
On the other hand, since \(\mathcal{L}f_1=0\), the skew-symmetry property of the Hamiltonian linearization leads to the following equation
\[
\Psi(\mathcal{L}\tilde r^{1,0}_{0010},f_1)
=
-\Psi(\tilde r^{1,0}_{0010},\mathcal{L}f_1)
=
0.
\]
Therefore, we can conclude that $c^{1,0}_1=0.$
Substituting this back into \eqref{eq:homo-0010-simplified} gives $
\mathcal{L}\tilde r^{1,0}_{0010}=0.$
Since the kernel of \(\mathcal{L}\) on the center subspace is spanned by \(f_1\), it follows that
\begin{equation}\label{eq:r0010-form}
\tilde r^{1,0}_{0010}=c f_1
\end{equation}
for some scalar \(c\).
Now, we use \eqref{eq:homo-0100-simplified} together with \eqref{eq:r0010-form} to deduce
\[
\mathcal{L}\tilde r^{1,0}_{0100}-cf_1
=
-
c^{1,0}_5f_1
-
2c^{1,0}_6f_2.
\]
Rearranging the terms yields
\begin{equation}\label{eq:r0100-eqn}
\mathcal{L}\tilde r^{1,0}_{0100}
=
(c-c^{1,0}_5)f_1
-
2c^{1,0}_6f_2.
\end{equation}

As before, we now pair \eqref{eq:r0100-eqn} with \(e_2\). On the right-hand side we get
\[
\Psi\big((c-c^{1,0}_5)f_1-2c^{1,0}_6f_2,e_2\big)
=
(c-c^{1,0}_5)\Psi(f_1,e_2)-2c^{1,0}_6\Psi(f_2,e_2).
\]
Hence,
\[
\Psi\big((c-c^{1,0}_5)f_1-2c^{1,0}_6f_2,e_2\big)=2c^{1,0}_6.
\]
On the left-hand side, we have
\[
\Psi(\mathcal{L}\tilde r^{1,0}_{0100},e_2)
=
-\Psi(\tilde r^{1,0}_{0100},\mathcal{L}e_2)
=
-\Psi(\tilde r^{1,0}_{0100},f_1),
\]
where we have used \(\mathcal{L}e_2=f_1\). Hence, we obtain
\begin{equation}\label{eq:c160-intermediate}
2c^{1,0}_6=-\Psi(\tilde r^{1,0}_{0100},f_1).
\end{equation}

Next, we use \eqref{eq:homo-0001-simplified}:
\[
\mathcal{L}\tilde r^{1,0}_{0001}-\tilde r^{1,0}_{0100}
=
-
\mathcal{N}^{1,0}_1[f_2]
+
2c^{1,0}_3e_2.
\]
Pairing this with \(f_1\) and using the fact that \(\mathcal{L}f_1=0\), the first term on the left gives
\[
\Psi(\mathcal{L}\tilde r^{1,0}_{0001},f_1)=0.
\]
Also, since \(\Psi(e_2,f_1)=0\), therefore, we have
\[
-\Psi(\tilde r^{1,0}_{0100},f_1)
=
-\Psi(\mathcal{N}^{1,0}_1[f_2],f_1).
\]
This is equivalent to saying
\begin{equation}\label{eq:r0100-pairing}
\Psi(\tilde r^{1,0}_{0100},f_1)=\Psi(\mathcal{N}^{1,0}_1[f_2],f_1).
\end{equation}
Combining \eqref{eq:c160-intermediate} and \eqref{eq:r0100-pairing}, we obtain
\begin{equation}\label{eq:c160-from-N}
2c^{1,0}_6=-\Psi(\mathcal{N}^{1,0}_1[f_2],f_1).
\end{equation}
Thus, it remains to evaluate this symplectic pairing. Using \eqref{eq:N110-values}, we have
\[
\mathcal{N}^{1,0}_1(f_2)=\left(0,d_1^{-1/2}\beta_*^{-1}\alpha_*^{-1}a(1),0,0\right).
\]
Since \(\Psi\) pairs the second component of the first vector with the first component of the
second vector, it follows that
\[
\Psi(\mathcal{N}^{1,0}_1[f_2],f_1)
=
\frac{1}{d_1\alpha_*^2}.
\]
Substituting this into \eqref{eq:c160-from-N} yields
\[
c^{1,0}_6=-\frac{1}{2d_1\alpha_*^2}=-\frac{1}{\beta_2}.
\]
The coefficient $c^{1,0}_2$ vanishes and can be shown using Proposition 4.1 in \cite{GrovesWahlen2007}.
\paragraph{\underline{\textbf{Computation for \(c^{0,1}_1\)}}}
The coefficient $c^{0,1}_1$ can be computed as follows
\[
c^{0,1}_1= K^{0,1}_2[f_1,f_1]+2K^{0,0}_2[f_1,r^{0,1}_{0010}]=-\frac{1}{2d_1\alpha_*^2}=-\frac{1}{\beta_2}.
\]
\paragraph{\underline{\textbf{ Computation for \(c^{2,0}_1\)}}}
The same reasoning as above implies $c^{2,0}_1=0$.
\\
\paragraph{\underline{\textbf{ Computation for \(c^{0,0}_1\)}}}
Finally, the coefficient of the cubic term $p_1^3$ is
\[
c^{0,0}_1=\frac{1}{3}\Psi(\mathcal{N}^{0,0}_2[f_1,f_1],f_1)=\frac{-1}{2d^{3/2}_1}\int_0^1 a(s)^{-5}\,\dd s.
\]

Recalling the scaling in \eqref{scaling} and the coefficients we computed above, we obtain
\begin{equation}
\tilde H^\varepsilon(Q_1,Q_2,P_1,P_2)
=
\mu^{12} \left(-\frac{1}{2}P_1^2- Q_1Q_2
-(1+\delta)Q_2^2
+c^{0,0}_1\,P_1^3\right)
+O(\mu^{14}).
\end{equation}
As a result, we derive the corresponding Hamilton equations
\begin{equation}
    \begin{aligned}
        &Q_{1X}=-P_1+3c^{0,0}_1P_1^2+O(\mu)\\&
        Q_{2X}=P_2+O(\mu)\\&
        P_{1X}=Q_2+O(\mu)\\&
        P_{2X}=2(1+\delta)Q_2+Q_1+O(\mu)
    \end{aligned}
\end{equation}
Upon taking $\mu\to 0$, we can rewrite the above system as a single equation in terms of $u:=3c^{0,0}_1P_1$ and its derivatives in which we get

\begin{equation}\label{eq:real11-model}
u_{XXXX}-2(1+\delta)u_{XX}+u-u^2
=
O(\mu).
\end{equation}
The essential point is that the
four-dimensional center-manifold collapses, after normalization, to a single
reversible fourth-order differential equation.

The phase portrait of \eqref{eq:real11-model} is well known: it possesses a primary reversible homoclinic
orbit and, by standard results of Devaney \cite{devaney1976homoclinic} type, an infinite family of multipulse homoclinic
solutions. These correspond to solitary waves whose profiles have several large troughs,
separated by small oscillations, and which approach the laminar state with an oscillatory
exponentially decaying tail.

\subsection{The Hamiltonian--Hopf bifurcation}\label{sec:hopf}

We next formulate the \(f\)-plane analogue of the Hamiltonian--Hopf bifurcation
treated by Groves and Wahl\'en at points of the curve \(C_2\).
In spectral terms, this occurs when two pairs of purely imaginary spatial eigenvalues
collide at nonzero points on the imaginary axis and then leave the axis as a
complex quartet. In the scalar Sturm--Liouville formulation, this collision point on the imaginary axis corresponds to a
double negative eigenvalue
\[
\nu=-\mathfrak{q}^2<0
\]
of the scalar problem \eqref{eq:SL-problem}.

A tangent intersection at a negative value \(\nu=-\mathfrak{q}^2<0\) corresponds to an
algebraically double pair of imaginary eigenvalues \(\kappa=\pm i\mathfrak{q}\).
The set of parameter values at which this happens defines the analogue of
Groves--Wahl\'en curve \(C_2\). We therefore fix a reference point
\[
(\beta_0,\alpha_0)\in C_2
\]
and choose the perturbations
\begin{equation}\label{eq:hopf-parameters}
(\varepsilon_1,\varepsilon_2)=(0,\delta),
\qquad
0<\delta\ll1.
\end{equation}
Thus, \(\beta=\beta_0\) is held fixed, while \(\alpha=\alpha_0+\delta\) varies transversely
to the critical curve. This is exactly the parameter choice used by Groves and Wahl\'en  \cite{GrovesWahlen2007}
for the Hamiltonian--Hopf bifurcation. At \(\delta=0\), the Hamiltonian--Hopf curve is characterized by a tangency of the scalar
spectral graph at a negative value \(\nu=-\mathfrak{q}^2<0\), so that the spatial eigenvalues are
\(\pm i\mathfrak{q}\) with algebraic multiplicity two. For \(|\delta|\ll1\), this tangency unfolds:
on one side, \(\delta<0\), the double negative eigenvalue splits into two distinct negative
eigenvalues, corresponding to two purely imaginary pairs of spatial eigenvalues; on the other
side, \(\delta>0\), the negative scalar eigenvalue ceases to be real, and the corresponding spatial eigenvalues
leave the imaginary axis to form a complex quartet.

Assume that the linearized operator \(\mathcal{L}\) at \((\beta_0,\alpha_0)\) has eigenvalues
\(\pm i\mathfrak{q}\) of algebraic multiplicity two, with generalized eigenvectors \(e,f,\bar e,\bar f\)
satisfying
\begin{equation}\label{eq:hopf-jordan}
\mathcal{L}e=i\mathfrak{q}\,e,
\qquad
\mathcal{L}\bar e=-i\mathfrak{q}\,\bar e,
\qquad
(\mathcal{L}-i\mathfrak{q}\mathcal{I})f=e,
\qquad
(\mathcal{L}+i\mathfrak{q}\mathcal{I})\bar f=\bar e.
\end{equation}
We would like to note that the point $(\beta_0,\alpha_0)$ exists locally near the point $(\beta_\ast,\alpha_\ast)$ from the Hamiltonian 1:1 resonance. 
Modifying \(f\) by a suitable multiple of \(e\), if necessary, we may suppose that
\begin{equation}\label{eq:hopf-symplectic}
\Psi(e,\bar f)=1,
\qquad
\Psi(f,\bar e)=-1,
\end{equation}
and that all other symplectic products of these vectors vanish.
Furthermore, the reversibility condition may be arranged so that
\begin{equation}\label{eq:hopf-reverser}
Se=\bar e,
\qquad
Sf=-\bar f.
\end{equation}
It follows that \(\{e,f,\bar e,\bar f\}\) is a symplectic basis for the
four-dimensional center subspace
\[
X_c=\operatorname{span}\{e,f,\bar e,\bar f\}.
\] 
We also introduce complex canonical coordinates \((A,B)\in\C^2\) by writing
\begin{equation}\label{eq:hopf-coordinates}
\tilde{u}_1=Ae+Bf+\bar A\,\bar e+\bar B\,\bar f.
\end{equation}
In these coordinates the reverser acts by
\[
S(A,B)=(\bar A,-\bar B).
\]

%\subsection{The reduced Hamiltonian and Birkhoff normal form}

Let \(\widetilde H^{0,\delta}(A,B)\) denote the reduced Hamiltonian on the center manifold. The flow of the Hamiltonian can be analyzed using the theory developed by Iooss \& P\'erou\`eme \cite{IoossPeroueme1993} and Buffoni \& Groves \cite{BuffoniGroves1999}.
By Birkhoff normal-form theory for reversible Hamiltonian systems, for each \(n_0\ge2\),
there is a near-identity analytic symplectic change of coordinates which transforms the
reduced Hamiltonian into the form
\begin{equation}\label{eq:hopf-normal-form}
\widetilde H^{0,\delta}(A,B)
=
iq(A\bar B-\bar A B)+|B|^2
+
H_{\mathrm{NF}}\!\left(|A|^2,\ i(A\bar B-\bar A B),\ \delta\right)
+
O\!\left(|(A,B)|^2 |(\delta,A,B)|^{n_0}\right),
\end{equation}
where \(H_{\mathrm{NF}}\) is a real polynomial of order $n_0+1$ in the invariants
\[
I_1:=|A|^2,
\qquad
I_2:=i(A\bar B-\bar A B),
\qquad
I_3:=\delta.
\]
The associated Hamilton equations are
\begin{equation}\label{eq:hopf-eq-A}
A_x
=
i\mathfrak{q}A+B
+
iA\,\partial_2H_{\mathrm{NF}}
+
O\!\left(|(A,B)|\,|(\delta,A,B)|^{n_0}\right),
\end{equation}
\begin{equation}\label{eq:hopf-eq-B}
B_x
=
i\mathfrak{q}B
+
iB\,\partial_2H_{\mathrm{NF}}
-
A\,\partial_1H_{\mathrm{NF}}
+
O\!\left(|(A,B)|\,|(\delta,A,B)|^{n_0}\right).
\end{equation}
Since $H_{NF}(0,0,\delta)=0$ and the origin is an equilibrium, the lowest-order terms are
obtained by writing the most general real polynomial in $I_1$, $I_2$, and $\delta$ of low order, namely
\[
H_{NF}
=
\delta c_1 I_1+\delta c_2 I_2+c_3 I_1^2+c_4 I_1I_2-c_5 I_2^2+\delta^2 c_6 I_1+\delta^2 c_7 I_2+\cdots.
\]
The theory in \cite{IoossPeroueme1993} and \cite{BuffoniGroves1999} that we will use below, requires that $c_1<0$ and $c_3>0$ from the above expression of $H_{NF}$.
Their theory can be summarized as follows.

\begin{theorem}[Envelope solitary waves from the Hamiltonian--Hopf bifurcation]\label{thm:HH-fplane}

Suppose that
\[
c_1<0,
\qquad
c_3>0.
\]
Then the following statements hold.

\begin{enumerate}[label=\textup{(\roman*)}]
    \item \cite{IoossPeroueme1993} For each sufficiently small positive value of \(\delta\), the reduced two-degree-of-freedom Hamiltonian system associated with the truncated normal form possesses two distinct symmetric homoclinic solutions.

    \item \cite{BuffoniGroves1999} For each sufficiently small positive value of \(\delta\), the same reduced Hamiltonian system possesses infinitely many geometrically distinct homoclinic solutions which, generically, resemble multiple copies of one of the primary homoclinic solutions in \textup{(i)}.
\end{enumerate}

These homoclinic solutions correspond, via the center-manifold reduction and the inverse of the flattening and hodograph changes of variables, to small-amplitude envelope solitary-wave solutions of the two-dimensional steady \(f\)-plane capillary-gravity problem with general vorticity. Their amplitude is of order
\[
O\bigl(( -c_1\delta)^{1/2}\bigr),
\]
and they decay exponentially to the underlying laminar horizontal flow as
\[
x\to \pm\infty.
\]
\end{theorem}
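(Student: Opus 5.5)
The plan is to treat Theorem~\ref{thm:HH-fplane} as a transfer result. Parts (i) and (ii) are statements about the truncated reduced Hamiltonian \eqref{eq:hopf-normal-form}. Once the normal form is in place, the remaining work is to check that the hypotheses of \cite{IoossPeroueme1993} and \cite{BuffoniGroves1999} hold verbatim for it, and then to lift the resulting orbits back to the physical problem.

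\textbf{Step 1: verify the setting of the cited theorems.} At $(\beta_0,\alpha_0)\in C_2$, Theorem~\ref{CMT} applies:
\begin{itemize}
\item $(A1)$ follows from Proposition~\ref{prop:spectrum-fplane};
\item $(A2)$ follows from Lemma~\ref{lem:resolvent-fplane} together with the compactness argument for $|\xi|\le\xi_0$;
\item $(A3)$ follows from smoothness of the nonlinearity in Proposition~\ref{prop:flattened-system}.
\end{itemize}
This yields a four-dimensional, reversible, symplectic reduced system on $X_c=\operatorname{span}\{e,f,\bar e,\bar f\}$. Using the normalisation \eqref{eq:hopf-symplectic} and the reverser action \eqref{eq:hopf-reverser}, the quadratic part at $\delta=0$ takes the canonical form $i\mathfrak q(A\bar B-\bar AB)+|B|^2$. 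I would check this via the identity \eqref{eq:quadratic-identity}, evaluated on the Jordan chain \eqref{eq:hopf-jordan}. Applying the reversible Birkhoff normal form then gives \eqref{eq:hopf-normal-form}, with $H_{\mathrm{NF}}$ depending only on $I_1,I_2,\delta$. This is exactly the situation treated by Iooss--P\'erou\`eme.

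\textbf{Step 2: rescale and apply the cited results.} Set
\[
A(x)=\delta^{1/2}\hat A(\delta^{1/2}x)e^{i\mathfrak q x},\qquad B(x)=\delta\,\hat B(\delta^{1/2}x)e^{i\mathfrak q x}.
\]
At leading order this gives the stationary NLS-type equation
\[
\hat A''=-c_1\hat A+2c_3|\hat A|^2\hat A.
\]
Under $c_1<0<c_3$ this equation has the reversible homoclinic $\hat A=\sqrt{-c_1/c_3}\,\operatorname{sech}(\sqrt{-c_1}X)e^{i\theta}$, with two symmetric phases $\theta=0,\pi/2$. This is the origin of the amplitude $O((-c_1\delta)^{1/2})$.

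Persistence of the two symmetric homoclinics for the full normal form including the $O(|(A,B)|^2|(\delta,A,B)|^{n_0})$ tail is precisely the content of \cite{IoossPeroueme1993}, which gives~(i). Their argument uses a transversality of $\operatorname{Fix}S$ with the unstable manifold, controlled via the invariant $I_2$. Part~(ii) is then \cite{BuffoniGroves1999}, a variational/multi-bump construction applied to the same reduced Hamiltonian; we invoke it as stated.

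\textbf{Step 3: lift back to the physical problem.} Each homoclinic $\tilde u_1(x)$ gives a solution $u=\tilde u_1+\tilde r(\tilde u_1;\delta)$ of \eqref{eq:flattened-rho}--\eqref{eq:flattened-bc}, since $\tilde r(0;\delta)=0$. The solution is exponentially decaying because the origin is hyperbolic on the reduced system for $\delta>0$: the complex quartet has real parts $\pm O(\delta^{1/2})$. Inverting $G^\varepsilon$, then the shift \eqref{eq:shift}, the nondimensionalisation, and the Dubreil--Jacotin map (a diffeomorphism by \cite{MartinMatioc2014} while $h_\zeta>0$, which holds for small amplitude) produces the envelope solitary waves. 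Symmetry follows from $S$-reversibility.

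\textbf{Main obstacle.} The delicate step is Step~1: checking that the reduced Hamiltonian genuinely has the nondegenerate Hamiltonian--Hopf normal form at a point of $C_2$. This means that $\pm i\mathfrak q$ has a Jordan block of length exactly two with $\Psi(e,\bar f)\neq0$, and that no other eigenvalues lie on the imaginary axis. I would deduce this from the tangency characterisation $B'(-\mathfrak q^2)=-\widehat\beta$ with $B''(-\mathfrak q^2)\neq0$, relating the symplectic pairing to $B'+\widehat\beta$ as in the $0^2$ computation \eqref{02symplectic}. I would then simply take $c_1<0<c_3$ as the stated hypothesis, so the theorem becomes conditional on these coefficient signs and needs no further computation.
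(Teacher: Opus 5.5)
Your plan matches the paper, which gives no independent argument for Theorem~\ref{thm:HH-fplane}: it transfers the results of Iooss--P\'erou\`eme and Buffoni--Groves to the reduced normal form \eqref{eq:hopf-normal-form}, takes $c_1<0<c_3$ as hypotheses (checked separately in Proposition~\ref{prop:HH-signs}), and lifts back through the center manifold and the changes of variables. Two slips in your heuristic Step~2 should be fixed: since $B_x=\cdots-A\,\partial_1H_{\mathrm{NF}}$, the envelope equation is $\hat A''=-c_1\hat A-2c_3|\hat A|^2\hat A$ (with your $+2c_3$ there is no homoclinic when $c_3>0$, and your $\operatorname{sech}$ profile does not solve it); and because $S(A,B)=(\bar A,-\bar B)$ forces $\hat A(0)\in\R$, the two symmetric phases are $\theta=0$ and $\theta=\pi$, not $\theta=\pi/2$, which gives a leading-order odd profile.
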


To be able to use the above theorem, it remains to show that the sign conditions \(c_1<0\) and \(c_3>0\) are satisfied. Since the expressions that we will obtain for both constants are not explicit, we can only guarantee that the  sign conditions hold for $(\beta_0,\alpha_0)$ near the point $(\beta_\ast,\alpha_\ast)$ locally on the branch of \(C_2\). To that end, we introduce a small parameter \(\mu>0\) and write
\[
(\beta_0,\alpha_0)=(\beta_\mu,\alpha_\mu),
\]
where
\[
\beta_\mu=\beta_* - 2\alpha_*^2 d_1\,\mu^2 + O(\mu^4),
\qquad
\alpha_\mu=\alpha_*+\alpha_*^2 d_1\,\mu^4+O(\mu^6),
\]
exactly taken from \eqref{eq:C1-local-expansion}.
\begin{proposition}[Hamiltonian--Hopf normal-form signs]\label{prop:HH-signs}
Let \((\beta_\mu,\alpha_\mu)\) be the local Hamiltonian--Hopf branch near
\((\beta_*,\alpha_*)\), parametrized by \(0<\mu\ll1\). Then the normal-form
coefficients \(c_1\) and \(c_3\) satisfy
\[
c_1
=
-\frac{1}{4d_1}\alpha_*^{-2}\mu^{-2}
+O(1),
\]
and
\[
c_3
=
\frac{19}{64d_1^3}
\left(\int_0^1a(\zeta)^{-5}\,\dd\zeta\right)^2
\mu^{-8}
+
O(\mu^{-6}).
\]
Consequently,
\[
    c_1<0<c_3
\]
for all sufficiently small \(\mu>0\).
\end{proposition}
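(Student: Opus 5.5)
The plan is to compute $c_1$ and $c_3$ asymptotically by relating the Hamiltonian--Hopf normal form on $C_2$ near the codimension-two point to the $0^4$ normal form from Subsection~\ref{sec:real11}, following Groves--Wahl\'en. Near $(\beta_*,\alpha_*)$ the four-dimensional centre space at $(\beta_\mu,\alpha_\mu)$ is a small deformation of $\operatorname{span}\{w_1,\dots,w_4\}$. I would therefore not recompute the reduction from scratch. Instead I would start from the truncated reduced Hamiltonian \eqref{eq:truncated-H-real11}, with the coefficients already computed there, but with $\varepsilon_1=\beta_\mu-\beta_*<0$ (the $C_2$ side, $\mathfrak{q}\sim\mu$) and $\varepsilon_2=\alpha_\mu-\alpha_*+\delta$. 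Using the same weighted scaling \eqref{scaling}, the rescaled Hamiltonian is, up to $O(\mu)$ corrections,
\[
\mu^{12}\Bigl(-\tfrac12P_1^2-Q_1Q_2+Q_2^2+c^{0,0}_1P_1^3\Bigr),
\]
plus the $\delta$-term. This corresponds to the fourth-order model $u_{XXXX}+2u_{XX}+u-u^2=\dots$ at the sign $1+\delta\to -1$. Its linear part has a double pair of eigenvalues $\pm i$, i.e. $\pm i\mathfrak{q}$ with $\mathfrak{q}\approx\mu$ in original variables.

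First, I would construct the Hopf basis $e,f,\bar e,\bar f$ explicitly for this rescaled linear system. This means finding the eigenvector and generalized eigenvector at $+i$ as combinations of $e_1,e_2,f_1,f_2$. I would normalise them so that \eqref{eq:hopf-symplectic} and \eqref{eq:hopf-reverser} hold, and keep track of the powers of $\mu$ coming from \eqref{scaling}. This yields the linear change $(q_1,q_2,p_1,p_2)\mapsto(A,B)$, and in it $p_1$ is the dominant component, of size $\mu^4$.

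Second, for $c_1$, I would express the $\delta$-quadratic term $c^{0,1}_1\delta p_1^2=-\beta_2^{-1}\delta p_1^2$ in the $(A,B)$ variables and project it onto the invariant $I_1=|A|^2$ after averaging over the $S^1$ action generated by $I_2$. The leading contribution comes from $p_1\approx \kappa(A+\bar A)$ with an explicit $\kappa$. Averaging gives $c_1=-\beta_2^{-1}\cdot 2|\kappa|^2$, and inserting $\beta_2=2d_1\alpha_*^2$ and the $\mu$-powers should produce $-\tfrac{1}{4d_1}\alpha_*^{-2}\mu^{-2}+O(1)$.

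Third, and this is the main obstacle, $c_3$ is not read off from a single cubic coefficient. In Birkhoff normal form the cubic term $c^{0,0}_1p_1^3$ contains no resonant monomials and must be removed by a near-identity symplectic transformation. It then feeds into the $|A|^4$ coefficient quadratically, while the intrinsic quartic terms are of lower order in $\mu$. I would carry out the standard second-order normal-form computation for the model $H_2+c^{0,0}_1P_1^3$. This involves solving the homological equation for the generating function of the cubic terms with frequencies $\pm 3i$ and $\pm i$ (nonresonant against $H_2$), then computing the $S^1$-average of $\tfrac12\{W_3,H_3\}$ restricted to $I_1^2$. The factor $19/64$ should come from summing the contributions of the $e^{3i}$ and $e^{i}$ harmonics together with the nilpotent coupling $|B|^2$. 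Squaring $c^{0,0}_1=-\tfrac{1}{2}d_1^{-3/2}\int_0^1a^{-5}$ gives $(\int_0^1 a^{-5})^2/(4d_1^3)$, and undoing the scaling gives $\mu^{-8}$.

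Finally, the $O(\mu)$ remainders in the rescaled Hamiltonian only perturb these leading coefficients by relative $O(\mu^2)$ terms, because the expansion is in even powers, which gives the stated error terms. Since $d_1>0$ and $\int_0^1a^{-5}>0$, the signs $c_1<0<c_3$ then follow for small $\mu$.
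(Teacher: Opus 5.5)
Your proposal is correct, and it takes a genuinely different route from the paper.

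\textbf{The paper's route.} The paper works at the Hopf point in the full phase space. It expands $e_0,f_0$ in powers of $\mu$ along the Jordan chain $w_1,\dots,w_4$ and normalises them through $d_4,d_5$. It then evaluates $c_1=d_4^{-1}\Psi(\mathcal N^{0,1}_1[e_0],\bar e_0)$, and obtains $c_3$ as pairings of $\mathcal N_2,\mathcal N_3$ with the solutions $\tilde R^{0,0}_{1010},\tilde R^{0,0}_{2000}$ of homological equations for $\mathcal L$ and $\mathcal L-2i\mu$. Both operators are nearly singular near the $0^4$ point, and the $\mu^{-8}$ comes from the $\mu^{-2}$ blow-up of these auxiliary vectors.

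\textbf{Your route.} You put all of that near-singularity into the weighted scaling of the $0^4$ reduced Hamiltonian of Subsection~\ref{sec:real11}. The only $f$-plane input is then $\beta_2$ and $c^{0,0}_1$, and the rest is the classical Hamiltonian--Hopf normal form of $u_{XXXX}+2u_{XX}+u=u^2$. The plan does close with exactly the stated constants:
\begin{itemize}
\item In the rescaled variables, ordered $(Q_1,Q_2,P_1,P_2)$, the reversible, symplectically normalised Hopf vectors are $e=\tfrac12(i,i,1,-1)$ and $f=(-\tfrac34,\tfrac14,\tfrac i4,\tfrac{3i}4)$. Hence $P_1=\tfrac12(\mathcal A+\bar{\mathcal A})+\tfrac i4(\mathcal B-\bar{\mathcal B})$, and averaging gives $\tilde c_1=-1/(2\beta_2)$.
\item For $u\approx\hat a e^{iX}+\mathrm{c.c.}$, the zero mode $2|\hat a|^2$ and the second harmonic $\hat a^2/9$ give $\hat a_{XX}=-\tfrac{19}{18}|\hat a|^2\hat a$. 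Writing $\mathcal A\approx e^{iX}a$, one has $\hat a=\tfrac32c^{0,0}_1a$. Comparing with $a_{XX}=-2\tilde c_3|a|^2a$ gives $\tilde c_3=\tfrac{19}{16}(c^{0,0}_1)^2$.
\item The canonical rescaling $A=\mu^5\mathcal A$, $B=\mu^6\mathcal B$ turns $\mu^{12}h_2$ into $i\mu(A\bar B-\bar AB)+|B|^2$. It yields $c_1=-\tfrac{1}{4d_1\alpha_*^2}\mu^{-2}$ and $c_3=\tfrac{19}{64d_1^3}\bigl(\int_0^1a^{-5}\dd\zeta\bigr)^2\mu^{-8}$.
\end{itemize}

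\textbf{Trade-offs.} Your route buys transparency: the constants are universal, and the relative $O(\mu^2)$ error is just reversibility, since the $q$'s carry odd weight and occur only in even numbers. It also avoids any infinite-dimensional asymptotics. The cost is dependence on the Subsection~\ref{sec:real11} coefficients. In particular you need $c^{1,0}_1=c^{2,0}_1=c^{1,0}_2=0$; otherwise a weight-$10$ term would dominate.

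You should also state explicitly that the Hopf coefficients are intrinsic. The Taylor jets at the origin of the codimension-two centre manifold and of the centre manifold built at the Hopf point coincide, so computing on the former is legitimate.

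\textbf{One slip.} Your displayed rescaled Hamiltonian must also contain the weight-$12$ term $\tfrac12P_2^2$ coming from $\tilde H^{0,0}_2$. Without it, the linear part is not $(\partial_X^2+1)^2$.
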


\begin{proof}
We let
\[
\mathfrak{q}=\mu.
\]
Then one constructs vectors \(e_0\) and \(f_0\) satisfying
\[
\cL e_0=i\mu\,e_0,
\qquad
(\cL-i\mu \mathcal{I})f_0=e_0.
\]
Their components admit asymptotic expansions of the form
\[
e_0=
\begin{pmatrix}
i\mu\alpha_*^{-1}
+i\mu^3 \Theta(1)
+O_i(\mu^5)
\\[1.5ex]
\mu^2 a(1)\alpha_*^{-1}
+\mu^4 a(1)\Theta(1)
+O_r(\mu^6)
\\[1.5ex]
i\mu I(\zeta)
+i\mu^3 \Theta(\zeta)
+O_i(\mu^5)
\\[1.5ex]
\mu^2 a(\zeta)I(\zeta)
+\mu^4 a(\zeta)\Theta(\zeta)
+O_r(\mu^6)
\end{pmatrix},
\]
and
\[
f_0=
\begin{pmatrix}
2\mu^2 \Theta(1)
+O_r(\mu^4)
\\[1.5ex]
-i\mu a(1)\alpha_*^{-1}
-3i\mu^3 a(1)\Theta(1)
+O_i(\mu^5)
\\[1.5ex]
2\mu^2 \Theta(\zeta)
+O_r(\mu^4)
\\[1.5ex]
-i\mu a(\zeta)I(\zeta)
-3i\mu^3 a(\zeta)\Theta(\zeta)
+O_i(\mu^5)
\end{pmatrix}.
\]
Here, \(O_r(\mu^n)\) and \(O_i(\mu^n)\) denote, respectively, real-valued and purely
imaginary quantities of order \(O(\mu^n)\). Additionally, 
\[
\Theta(\zeta):=\int_0^\zeta a(r)^{-3}\int_0^r a(t)I(t)\,\dd t\,\dd r.
\]

These expansions are the \(f\)-plane analogue of the Groves--Wahl\'en expansions near the
codimension-two point \((\beta_*,\alpha_*)\), and are obtained by expanding the
eigenvectors at the Hamiltonian--Hopf point in terms of the Jordan chain
\[
\cL w_1=0,\qquad
\cL w_2=w_1,\qquad
\cL w_3=w_2,\qquad
\cL w_4=w_3.
\]
In particular, one has 
\[
e_0=i\mu\alpha_*^{-1}w_1-\mu^2\alpha_*^{-1}w_2-i\mu^3\alpha_*^{-1}w_3+\mu^4\alpha_*^{-1}w_4+O(\mu^5),
\]
\[
f_0=i\mu\alpha_*^{-1}w_2-2\mu^2\alpha_*^{-1}w_3-3i\mu^3\alpha_*^{-1}w_4+O(\mu^4),
\]
and the above componentwise formulas follow by substituting the explicit \(f\)-plane expressions
for \(w_1,w_2,w_3,w_4\).

The next step is to normalize the generalized eigenvectors. Via direct calculations, one finds
\[
\Psi(e_0,f_0)=d_4,
\qquad
\Psi(f_0,\bar f_0)=i\,d_5,
\]
with
\[
d_4=4d_1\mu^4+O_r(\mu^6),
\qquad
d_5=-4d_1\mu^3+O_r(\mu^5).
\]
Moreover, the identities
\[
\Psi(e_0,\bar e_0)=0,
\qquad
\Psi(e_0,\bar f_0)=0,
\qquad
\Psi(\bar e_0,\bar f_0)=0
\]
follow from the symplectic relations and the eigenvalue equations above. Hence, one may define
\[
e=i\,d_4^{-1/2}e_0,
\qquad
f=i\,d_4^{-1/2}\left(f_0-\frac{i\,d_5}{2d_4}e_0\right),
\]
so that \(e,f,\bar e,\bar f\) satisfy the required symplectic normalization.

The coefficients \(c_1\) and \(c_3\) in the Hamiltonian--Hopf normal form are then given by
\[
c_1=\frac{1}{d_4}\,\Psi\!\bigl(\mathcal{N}^{0,1}_1[e_0],\bar e_0\bigr),
\]
and
\[
c_3=
\frac{1}{d_4^2}\Psi\!\bigl(\mathcal{N}^{0,0}_2[e_0,\tilde R^{0,0}_{1010}],\bar e_0\bigr)
+
\frac{1}{d_4^2}\Psi\!\bigl(\mathcal{N}^{0,0}_2[\bar e_0,\tilde R^{0,0}_{2000}],\bar e_0\bigr)
+
\frac{3}{2d_4^2}\Psi\!\bigl(\mathcal{N}^{0,0}_3[e_0,e_0,\bar e_0],\bar e_0\bigr).
\]

Using the expansions above and the homological equations for
\(\widetilde R^{0,0}_{1010}\) and \(\widetilde R^{0,0}_{2000}\), we obtain
the following leading-order asymptotics for the Hamiltonian--Hopf
normal-form coefficients:
\[
c_1
=
-\frac{1}{4d_1}\alpha_*^{-2}\mu^{-2}
+O_r(1),
\]
so in particular $c_1<0$ for sufficiently small \(\mu>0\).

Similarly, the cubic coefficient takes the form
\[
c_3
=
\frac{19}{64d_1^3}
\left(\int_0^1 a(s)^{-5}\,\dd s\right)^2
\mu^{-8}
+O_r(\mu^{-6}),
\]
and hence $c_3>0$ for sufficiently small \(\mu>0\).

The auxiliary vectors appearing above solve the homological equations
\[
\cL\tilde R^{0,0}_{1010}=-2\mathcal{N}^{0,0}_2[e_0,\bar e_0],
\qquad
(\cL-2i\mu I)\tilde R^{0,0}_{2000}=-\mathcal{N}^{0,0}_2[e_0,e_0].
\]
Their leading-order asymptotics are given by
\[
\tilde R^{0,0}_{1010}=
\begin{pmatrix}
-3d_1^{-1}\alpha_*^{-1}\mu^{-2}\displaystyle\int_0^1 a(s)^{-5}\,\dd s+O_r(1)
\\[1.5ex]
0
\\[1.5ex]
-3d_1^{-1}\mu^{-2}\displaystyle\int_0^1 a(s)^{-5}\,\dd s
\displaystyle\int_0^\zeta a(t)^{-3}\,\dd t
+O_r(1)
\\[1.5ex]
0
\end{pmatrix},
\]
and
\[
\tilde R^{0,0}_{2000}=
\begin{pmatrix}
\frac16 d_1^{-1}\alpha_*^{-1}\mu^{-2}\displaystyle\int_0^1 a(s)^{-5}\,\dd s+O_r(1)
\\[1.5ex]
-\frac13 i\,d_1^{-1}\alpha_*^{-1}\mu^{-1}a(1)\displaystyle\int_0^1 a(s)^{-5}\,\dd s+O_i(\mu)
\\[1.5ex]
\frac16 d_1^{-1}\mu^{-2}\displaystyle\int_0^1 a(s)^{-5}\,\dd s
\displaystyle\int_0^\zeta a(t)^{-3}\,\dd t
+O_r(\mu)
\\[1.5ex]
-\frac13 i\,d_1^{-1}\mu^{-1}a(\zeta)\displaystyle\int_0^1 a(s)^{-5}\,\dd s
\displaystyle\int_0^\zeta a(t)^{-3}\,\dd t
+O_i(\mu)
\end{pmatrix}.
\]

Consequently, the sign hypotheses required in the Hamiltonian--Hopf theorem
are satisfied for all points \((\beta_\mu,\alpha_\mu)\) on the local branch
\(C_2\) sufficiently near \((\beta_*,\alpha_*)\).
\end{proof}
%%%%%%%%%%%%%%%%%%%%%%%%%%%%%%%%%%%%%%%

\section{The weak-effective-gravity regime and its interaction with the low spectrum}\label{sec:weak-gravity}

This section isolates the $f$-plane-specific spectral consequences of the weak-effective-gravity
regime
\[
\alpha \approx 0,
\qquad\text{equivalently}\qquad
g_*=g-2\Omega c \approx 0.
\]
Our aim is to prove both Theorem~\ref{thm:nearstag} and Theorem~ \ref{thm:HH}.  We show that, for laminar flows uniformly
away from stagnation, the weak-effective-gravity regime is spectrally separated from the local
\(0^2\) and Hamiltonian--Hopf bifurcation curves.

% \subsection{The first eigenvalue branch near the \(0^2\) threshold}\label{subsec:first-eigenvalue}

% Here one studies the simple eigenvalue branch \(\nu_0(\alpha)\) passing through \(\nu=0\) at the
% critical value \(\alpha=\alpha_*\), and derives the local expansion
% \[
% \nu_0'(\alpha_*)=\frac{1}{\beta-\beta_*},
% \qquad
% \nu_0(\alpha)=\frac{\alpha-\alpha_*}{\beta-\beta_*}+O((\alpha-\alpha_*)^2).
% \]
% This determines the sign of the first eigenvalue on either side of the \(0^2\) threshold.

\subsection{Separation of the $0^2$ resonance from weak-effective gravity}
\label{sec:02-weak-gravity-proof}

Here, we prove Theorem~\ref{thm:nearstag}. The key point is that the
Hamiltonian $0^2$ resonance occurs at a critical value
\[
    \alpha=\alpha_*,
\]
and this critical value is determined explicitly by the underlying laminar
flow.

\begin{proof}[Proof of Theorem~\ref{thm:nearstag}]
From the zero-eigenvalue calculation in Section~\ref{sec:CMR}, the
Hamiltonian \(0^2\) resonance occurs at
\[
    \alpha=\alpha_*,
    \qquad
    \alpha_*=
    \left(
        \int_0^1 a(\zeta)^{-3}\,\dd\zeta
    \right)^{-1},
\]
provided \(\beta\neq\beta_*\). Thus the question of whether the \(0^2\)
resonance can occur near the weak-effective-gravity threshold \(\alpha=0\)
reduces to whether \(\alpha_*\) can approach zero.

Suppose first that the laminar flows remain uniformly separated from
stagnation, so that
\[
    a(\zeta)\geq a_{\min}>0,
    \qquad 0\leq \zeta\leq 1,
\]
uniformly along the family. Since the nondimensional laminar flow satisfies
\[
    \int_0^1 a(\zeta)^{-1}\,\dd\zeta=H(1)=1,
\]
we have
\[
    a(\zeta)^{-3}
    =
    a(\zeta)^{-2}a(\zeta)^{-1}
    \leq
    a_{\min}^{-2}a(\zeta)^{-1}.
\]
Therefore
\[
    \int_0^1 a(\zeta)^{-3}\,\dd\zeta
    \leq
    a_{\min}^{-2},
\]
and hence
\[
    \alpha_*
    =
    \left(
        \int_0^1 a(\zeta)^{-3}\,\dd\zeta
    \right)^{-1}
    \geq
    a_{\min}^2>0.
\]
Thus \(\alpha_*\), and therefore the corresponding critical effective gravity,
is bounded away from zero for any uniformly non-stagnant family of laminar
flows.

In dimensional variables,
\[
    g_*^{(0^2)}
    =
    \frac{m^2}{d^3}\alpha_*.
\]
Consequently, \(g_*^{(0^2)}\to0\) can occur only if no uniform positive lower
bound for \(a\) is available, that is, only if
\[
    \min_{\zeta\in[0,1]}a(\zeta)\to0.
\]
Finally, for a laminar flow,
\[
    h_\zeta=H_\zeta=\frac1a,
    \qquad
    h_\zeta=\frac1{\psi_z},
\]
so \(a=\psi_z\). Since \(\psi_z=u-c\), up to the fixed nondimensional scaling,
\[
    \min_{\zeta\in[0,1]}a(\zeta)\to0
    \qquad\Longleftrightarrow\qquad
    \inf_D(u-c)\to0.
\]
Hence, the weak-effective-gravity limit of the \(0^2\) resonance can be
reached only in the near-stagnation regime.
\end{proof}

\subsection{Persistence and separation from weak-effective gravity}
\label{sec:HH-weak-gravity-proof}

We now prove Theorem~\ref{thm:HH}. The proof is based on the tangency
conditions for the scalar spectral problem. We use the original
\(\zeta\)-variable formulation, since in this form the effective-gravity
parameter \(\alpha\) and the capillarity parameter \(\beta\) enter the
boundary condition directly.

\begin{proof}[Proof of Theorem~\ref{thm:HH}]
Consider the scalar spectral problem
\begin{equation}\label{eq:zeta-spectral-HH-proof}
-\,a(\zeta)^{-1}\bigl(a(\zeta)^3\phi_\zeta\bigr)_\zeta
=
\nu\phi,
\qquad 0<\zeta<1,
\end{equation}
with boundary conditions
\begin{equation}\label{eq:zeta-bc-HH-proof}
\phi(0)=0,
\qquad
-\frac{a(1)^3}{\beta}\phi_\zeta(1)
+\frac{\alpha}{\beta}\phi(1)
=
\nu\phi(1).
\end{equation}
For each \(\nu\) near zero, let \(M(\cdot;\nu)\) be the solution of
\begin{equation}\label{eq:M-IVP-HH-proof}
\bigl(a^3M_\zeta\bigr)_\zeta=-\nu aM,
\qquad
M(0;\nu)=0,
\qquad
M_\zeta(0;\nu)=a(0)^{-3}.
\end{equation}
The normalization of \(M\) is immaterial for the quotient below. We choose
\(M_\zeta(0;\nu)=a(0)^{-3}\) only so that the solution at \(\nu=0\) is
\begin{equation}\label{eq:I-def-HH-proof}
    I(\zeta):=\int_0^\zeta a(s)^{-3}\,\dd s.
\end{equation}
We define
\begin{equation}\label{eq:B-zeta-proof}
    \mathfrak{B}(\nu):=
    \frac{a(1)^3M_\zeta(1;\nu)}{M(1;\nu)}.
\end{equation}
Since \(M(1;0)=I(1)>0\), the function \(\mathfrak{B}\) is analytic in a neighborhood
of \(\nu=0\). Multiplying the boundary condition
\eqref{eq:zeta-bc-HH-proof} by \(\beta\), we see that the scalar eigenvalue
condition is exactly
\begin{equation}\label{eq:B-intersection-HH-proof}
    \mathfrak{B}(\nu)=\alpha-\beta\nu.
\end{equation}
Thus eigenvalues are intersections of the graph \(s=\mathfrak{B}(\nu)\) with the line
\(s=\alpha-\beta\nu\), and a double eigenvalue corresponds to a tangential
intersection.

At \(\nu=0\), we have \(M(\zeta;0)=I(\zeta)\). Hence
\begin{equation}\label{eq:B0-HH-proof}
    \mathfrak{B}(0)=\frac{a(1)^3I_\zeta(1)}{I(1)}
    =
    \frac1{I(1)}
    =
    \alpha_*.
\end{equation}
We also record the value of \(B'(0)\), since it identifies the
codimension-two point. Write
\[
    M(\zeta;\nu)=I(\zeta)+\nu M_1(\zeta)+O(\nu^2).
\]
Then \(M_1\) solves
\[
    \bigl(a^3(M_1)_\zeta\bigr)_\zeta=-aI,
    \qquad
    M_1(0)=0,
    \qquad
    (M_1)_\zeta(0)=0.
\]
Consequently,
\[
    a(\zeta)^3(M_1)_\zeta(\zeta)
    =
    -\int_0^\zeta a(t)I(t)\,\dd t,
\]
and
\[
    M_1(\zeta)=-\Theta(\zeta),
    \qquad
    \Theta(\zeta):=
    \int_0^\zeta a(r)^{-3}
    \int_0^r a(t)I(t)\,\dd t\,\dd r.
\]
By the quotient rule,
\begin{align*}
\mathfrak{B}'(0)
&=
\frac{
a(1)^3(M_1)_\zeta(1)I(1)
-
a(1)^3I_\zeta(1)M_1(1)
}
{I(1)^2}
\\
&=
\frac{
-I(1)\displaystyle\int_0^1 a(t)I(t)\,\dd t
+
\displaystyle\int_0^1 a(r)^{-3}
\int_0^r a(t)I(t)\,\dd t\,\dd r
}
{I(1)^2}.
\end{align*}
Using Fubini's theorem,
\[
\int_0^1 a(r)^{-3}
\int_0^r a(t)I(t)\,\dd t\,\dd r
=
I(1)\int_0^1 a(t)I(t)\,\dd t
-
\int_0^1 a(t)I(t)^2\,\dd t.
\]
Therefore, we obtain
\begin{equation}\label{eq:Bprime-HH-proof}
    \mathfrak{B}'(0)
    =
    -\frac{1}{I(1)^2}
    \int_0^1 a(t)I(t)^2\,\dd t
    =
    -\alpha_*^2
    \int_0^1 a(t)I(t)^2\,\dd t
    =
    -\beta_*.
\end{equation}
Thus the zero spectrum satisfies
\[
    \mathfrak{B}(0)=\alpha_*,
    \qquad
    \mathfrak{B}'(0)=-\beta_*,
\]
so the limiting tangency point is precisely \((\beta_*,\alpha_*)\). This is
the codimension-two point at which the scalar eigenvalue \(\nu=0\) is
double, equivalently the spatial-dynamics operator has a quadruple zero
eigenvalue.

We now parametrize the Hamiltonian--Hopf curve near this point. A
Hamiltonian--Hopf tangency with imaginary spatial eigenvalues
\[
    \kappa=\pm i\mathfrak q
\]
corresponds to
\[
    \nu=-\mathfrak q^2<0.
\]
The tangency conditions for \eqref{eq:B-intersection-HH-proof} are
\begin{equation}\label{eq:HH-tangency-conditions-proof}
    \mathfrak{B}(\nu)=\alpha-\beta\nu,
    \qquad
    \mathfrak{B}'(\nu)=-\beta.
\end{equation}
Substituting \(\nu=-\mathfrak q^2\) into
\eqref{eq:HH-tangency-conditions-proof} gives the exact local
parametrization
\begin{equation}\label{eq:HH-param-exact-proof}
    \beta_H(\mathfrak q)
    =
    -\mathfrak{B}'(-\mathfrak q^2),
    \qquad
    \alpha_H(\mathfrak q)
    =
    \mathfrak{B}(-\mathfrak q^2)
    +
    \mathfrak q^2\mathfrak{B}'(-\mathfrak q^2).
\end{equation}
Since \(\mathfrak{B}\) is analytic near \(0\), the functions \(\beta_H\) and
\(\alpha_H\) are smooth even functions of \(\mathfrak q\). Expanding
\eqref{eq:HH-param-exact-proof} and using
\eqref{eq:B0-HH-proof}--\eqref{eq:Bprime-HH-proof}, we obtain
\begin{equation}\label{eq:HH-param-expansion-proof}
    \beta_H(\mathfrak q)
    =
    \beta_*+\mathfrak{B}''(0)\mathfrak q^2
    +
    O(\mathfrak q^4),
    \qquad
    \alpha_H(\mathfrak q)
    =
    \alpha_*-\frac12\mathfrak{B}''(0)\mathfrak q^4
    +
    O(\mathfrak q^6).
\end{equation}
The nondegeneracy condition \(\mathfrak{B}''(0)<0\) ensures that this tangency curve
has a nonzero leading-order variation. Hence, the Hamiltonian--Hopf
resonance curve persists smoothly as \(\mathfrak q\to0\), and its limiting
point is precisely the codimension-two \(0^4\) point
\[
    (\beta_*,\alpha_*).
\]
This proves the smooth persistence assertion.

It remains to prove the separation from the weak-effective-gravity regime.
Assume that the laminar flow is uniformly separated from stagnation. Then
there exists \(a_{\min}>0\) such that
\[
    a(\zeta)\ge a_{\min}>0,
    \qquad
    0\le \zeta\le 1.
\]
Since
\[
    \int_0^1 a(\zeta)^{-1}\,\dd\zeta=H(1)=1,
\]
we have
\[
    a(\zeta)^{-3}
    \le
    a_{\min}^{-2}a(\zeta)^{-1}.
\]
Therefore,
\[
    \int_0^1 a(\zeta)^{-3}\,\dd\zeta
    \le
    a_{\min}^{-2}.
\]
It follows that
\begin{equation}\label{eq:alpha-star-positive-HH-proof}
    \alpha_*
    =
    \left(
    \int_0^1a(\zeta)^{-3}\,\dd\zeta
    \right)^{-1}
    \ge
    a_{\min}^2
    >
    0.
\end{equation}
By \eqref{eq:HH-param-expansion-proof},
\[
    \alpha_H(\mathfrak q)\to\alpha_*
    \qquad
    \text{as }
    \mathfrak q\to0.
\]
Hence there exists \(\mathfrak q_0>0\) such that
\[
    \alpha_H(\mathfrak q)
    \ge
    \frac12\alpha_*
    >
    0,
    \qquad
    0\le |\mathfrak q|<\mathfrak q_0.
\]
Thus, the local Hamiltonian--Hopf curve is bounded away from the
weak-effective-gravity threshold \(\alpha=0\). Returning to dimensional
variables,
\[
    g_*^H(\mathfrak q)
    =
    \frac{m^2}{d^3}\alpha_H(\mathfrak q),
\]
so in particular
\[
    g_*^H(0)
    =
    \frac{m^2}{d^3}\alpha_*
    >
    0.
\]
Therefore, for \(|\mathfrak q|<\mathfrak q_0\), the local
Hamiltonian--Hopf curve cannot enter the weak-effective-gravity regime
\[
    g_*=g-2\Omega c\approx0.
\]
This proves the separation assertion and completes the proof of
Theorem~\ref{thm:HH}.
\end{proof}
\begin{center}
\bibliographystyle{alpha}
\bibliography{REF.bib}
\end{center}

\end{document}